\newtheorem{teo}{Theorem}[section]
\newtheorem{lem}[teo]{Lemma} % use the same couter of theorem
\newtheorem{prop}[teo]{Proposition} 
\newtheorem{defn}[teo]{Definition} 
\newtheorem{ex}[teo]{Example}
\newtheorem*{claim*}{Claim}
\newtheorem{op}[teo]{Question}
\newtheorem{rem}[teo]{Remark}
\newcommand{\kmar}[1]{{\color{blue}{#1}}}
\begin{document}

\title{Linear Systems, Matrices and Vector Spaces over Superfields}

\author{Kaique Matias de Andrade Roberto \& Hugo Rafael de Oliveira Ribeiro \& \\  
Hugo Luiz Mariano \&  Kaique Ribeiro Prates Santos}

\maketitle

\begin{abstract}
    %Building over recent results, we expand the basic theory of algebraic extensions to the realm of superfields -a field with multivalued sum and product-, showing that every superfield has a (unique up to isomorphism) full algebraic extension to a superfield that is algebraically closed. Moreover we show that  every infinite algebraically closed superfield admits quantifier elimination procedure.
    
   Motivated by some recent developments in abstract theories of quadratic forms, we start to develop in this work an  expansion of Linear Algebra  to multivalued structures (a multialgebraic structure is essentially an algebraic structure but endowed with some multivalued operations). 
    We introduce and study matrices and determinants over a commutative superrings   
  (roughly, a ring where the sum and product are multivalued)  and study linear systems and vector spaces over superfields. As an application, we obtain a fundamental result to the development of a  theory of algebraic extensions of superfields.
\end{abstract}

\section*{Introduction}

 Motivated by some recent developments in abstract theories of quadratic forms, we start in \cite{roberto2022ACmultifields2} a program of study of algebraic extensions of superfield and,  connect to this, we also start a development of an  expansion of Linear Algebra over to multivalued structures:  present the latter part  is the goal of the present work.

The concept of multialgebraic structure -- an ``algebraic like'' structure but endowed with some multiple valued operations -- has been studied since the 1930's; in particular, the concept of hyperrings was introduced by Krasner in the 1950's: this is essentially a ring with a multivalued operation of sum. In the same vein multiring (\cite{marshall2006real} is essentially a ring with multivalued sum but satisfying a lax distributive law and a superring (\cite{ameri2019superring})  is, roughly, a ring where the sum and product are multivalued. 
Some general algebraic study has been made on multialgebras: see for instance \cite{golzio2018brief} and \cite{pelea2006multialgebras}.

Since the middle of the 2000s decade, the notion of multiring has obtained more attention: a multiring is a lax hyperring, satisfying a weak distributive law, but  hyperfields and multifields coincide. Multirings   have been studied for  applications in many areas: in abstract quadratic forms theory (\cite{marshall2006real}, \cite{worytkiewiczwitt2020witt}, \cite{roberto2021quadratic}), tropical geometry (\cite{viro2010hyperfields}, \cite{jun2015algebraic}), algebraic geometry ((\cite{jun2021geometry}, \cite{baker2021descartes}), valuation theory (\cite{jun2018valuations}), Hopf algebras (\cite{eppolito2020hopf}), etc (\cite{baker2021structure}, \cite{ameri2020advanced}, \cite{ameri2017multiplicative}, \cite{bowler2021classification}). 
A more detailed account of variants of the concept of polynomials over hyperrings is even more recent: see \cite{jun2015algebraic}, \cite{ameri2019superring}, \cite{baker2021descartes}, and \cite{roberto2021superrings}.

We briefly describe the structure of the present work:
 In Section 1 we present preliminaries on multirings, supperings and superring of polynomials. In Section 2 we introduce and study matrices and determinants over commutative superrings. Section 3   contains a study of linear systems over superfields. In Section 4   we provide an application of the previously developed material: we obtain a simple algebraic extension of a superfield by adding a root of an irreducible polynomial (Theorem \ref{teohell2}), a fundamental result to the development of a  theory of algebraic extensions of superfields.
In Section 5 we gave the first step to establish a theory of vector spaces over superfields. We finish the work in Section 6, presenting some perspectives.

%Building over some new results on  superring of polynomials, we provide in the present work new steps the program of studying the hyperfields (and natural variants: superfields) under a natural notion of algebraic extension and roots of polynomials - this shares some common features with the recent work in \cite{baker2021descartes} - we show that every superfield has a (unique up to isomorphism) full algebraic extension to a superfield that is algebraically closed (Theorems \ref{uniq}, \ref{algclos}). Here we also expand   a model-theoretic oriented analysis of multialgebras initiated in \cite{roberto2021superrings},  providing a procedure of quantifier elimination flavor on the class of infinite algebraically closed superfields.
 
%The next steps in this program are a development of Galois theory and Galois cohomology theory, envisaging application to other mathematical theories as abstract structures of quadratic forms and real algebraic geometry (\cite{ribeiro2016functorial},\cite{roberto2021quadratic},\cite{roberto2021ktheory}, \cite{roberto2021hauptsatz}).

%Journal of Algebra Papers: \cite{mirvakili2012applications}, \cite{pelea2013hyperrings}, \cite{bowler2021classification}, \cite{bayon2008advanced}, \cite{anvariyeh2009strongly}, \cite{jun2021geometry}, \cite{eppolito2020hopf}, \cite{baker2021descartes}, \cite{aghabozorgi2013nilpotent}

%\linenumbers                % mostra as linhas no pdf

\section{Preliminaries}

\subsection{Multi-structures}

% \hlm{melhor ja fazer aqui a adaptacao de "comutativo" e "geral". Tambem acho que poderia enxugar um pouco...} 

\begin{defn}[Adapted from definition 1.1 in \cite{marshall2006real}]\label{defn:multimonoid}
 A \textbf{multigroup} is a first-order structure  $(G,\cdot,r,1)$ where $G$ is a non-empty set, 
$r:G\rightarrow G$ is a function, $1$ is an element of $G$, $\cdot \subseteq G\times G\times G$ is a ternary relation 
(that will play the role of binary multioperation, we denote $d\in a\cdot b$ for $(a,b,d)\in\cdot$) such that for all 
$a,b,c,d\in G$:
 \begin{description}
 \item [M1 - ] If $c\in a\cdot b$ then $a\in c\cdot(r(b))\wedge b\in(r(a))\cdot c$. We write $a\cdot b^{-1}$ to simplify 
$a\cdot(r(b))$.
 \item [M2 - ] $b\in a\cdot1$ iff $a=b$.
 \item [M3 - ] If $\exists\,x(x\in a\cdot b\wedge t\in x\cdot c)$ then
$\exists\,y(y\in b\cdot c\wedge t\in a\cdot y)$.
 \item [M4 - ] $c\in a\cdot b$ iff $c\in b\cdot a$.
\end{description}
The structure $(G,\cdot,r,1)$ is said to be \textbf{commutative (or abelian)} if satisfy for all $a,b,c\in G$ the condition
 \begin{description}
 \item [M4 - ] $c\in a\cdot b$ iff $c\in b\cdot a$.
\end{description}
The structure $(G,\cdot,1)$ is a \textbf{commutative multimonoid (with unity)} if satisfy M3 and M4 and the condition 
$a\in1\cdot a$ for all $a\in G$.
\end{defn}

\begin{defn}[Adapted from Definition 2.1 in \cite{marshall2006real}]\label{defn:multiring}
 A multiring is a sextuple $(R,+,\cdot,-,0,1)$ where $R$ is a non-empty set, $+:R\times 
R\rightarrow\mathcal P(R)\setminus\{\emptyset\}$,
 $\cdot:R\times R\rightarrow R$
 and $-:R\rightarrow R$ are functions, $0$ and $1$ are elements of $R$ satisfying:
 \begin{enumerate}[i -]
  \item $(R,+,-,0)$ is a commutative multigroup;
  \item $(R,\cdot,1)$ is a commutative monoid;
  \item $a.0=0$ for all $a\in R$;
  \item If $c\in a+b$, then $c.d\in a.d+b.d$. Or equivalently, $(a+b).d\subseteq a.d+b.d$.
 \end{enumerate}

Note that if $a \in R$, then $0 = 0.a \in (1+ (-1)).a \subseteq 1.a + (-1).a$, thus $(-1). a = -a$.
 
 $R$ is said to be an hyperring if for $a,b,c \in R$, $a(b+c) = ab + ac$. 
 
 A multiring (respectively, a hyperring) $R$ is said to be a multidomain (hyperdomain) if it hasn't zero divisors. A multiring 
$R$ will be a 
multifield if every non-zero element of $R$ has 
multiplicative inverse; note that hyperfields and multifields coincide.
\end{defn}

\begin{ex}\label{ex:1.3}
$ $
 \begin{enumerate}[a -]
  \item Suppose that $(G,\cdot,1)$ is a group. Defining $a \ast b = \{a \cdot b\}$ and $r(g)=g^{-1}$, 
we have that $(G,\ast,r,1)$ is a multigroup. In this way, every ring, domain and field is a multiring, 
multidomain and multifield, respectively.
  
  \item Let $K=\{0,1\}$ with the usual product and the sum defined by relations $x+0=0+x=x$, $x\in K$ and 
$1+1=\{0,1\}$. This is a multifield  called Krasner's multifield \cite{jun2015algebraic}. 
  
  \item $Q_2=\{-1,0,1\}$ is ``signal'' multifield with the usual product (in $\mathbb Z$) and the multivalued sum defined by 
relations
  $$\begin{cases}
     0+x=x+0=x,\,\mbox{for every }x\in Q_2 \\
     1+1=1,\,(-1)+(-1)=-1 \\
     1+(-1)=(-1)+1=\{-1,0,1\}
    \end{cases}
  $$
  \end{enumerate}
\end{ex}

\begin{ex}[Tropical Hyperfield \cite{viro2010hyperfields}]\label{tropical-ex}

For a fixed totally ordered abelian group $(G, +, -, 0, \leq)$ we can construct a  {\em tropical multifield} $T_G = (G \cup \{\infty\}, \otimes, \odot, \ominus, 0, 1) $ where:
\begin{multicols}{2}
\begin{enumerate}[i -]
    \item $\forall g \in G, g < \infty$;
    \item $g \otimes h := g+h$;
    \item $0 := \infty$;
    \item $1 := 0$;
    \item $\forall g \in G$, $g^{-1} = -g$;
    \item if $g \neq h$, $g \oplus h = \{min\{g,h\}\}$;
    \item $g \oplus g = \{h \in G \cup \{\infty\} : g \leq h\}$; 
    \item $\ominus g = g$.
\end{enumerate}
\end{multicols}
\end{ex}

In the sequence, we provide examples that generalize the previous ones.

 % Exemplo X_n
\begin{ex}[H-multifield, Example 2.8 in \cite{ribeiro2016functorial}]\label{H-multi}
Let $p\ge1$ be a prime integer and $H_p:=\{0,1,...,p-1\} \subseteq \mathbb{N}$. Now, define the binary multioperation and 
operation in $H_p$ as 
follow:
\begin{align*}
 a+b&=
 \begin{cases}H_p\mbox{ if }a=b,\,a,b\ne0 \\ \{a,b\} \mbox{ if }a\ne b,\,a,b\ne0 \\ \{a\} \mbox{ if }b=0 \\ \{b\}\mbox{ if 
}a=0 \end{cases} \\
 a\cdot b&=k\mbox{ where }0\le k<p\mbox{ and }k\equiv ab\mbox{ mod p}.
\end{align*}
$(H_p,+,\cdot,-, 0,1)$ is a hyperfield such that for all $a\in H_p$, $-a=a$. In fact, these $H_p$ are a kind of generalization of $K$, in the sense that $H_2=K$.
\end{ex}

 % Exemplo X_n
\begin{ex}[Kaleidoscope, Example 2.7 in \cite{ribeiro2016functorial}]\label{kaleid}
 Let $n\in\mathbb{N}$ and define 
 $$X_n=\{-n,...,0,...,n\} \subseteq \mathbb{Z}.$$ 
 We define the \textbf{$n$-kaleidoscope multiring} by 
$(X_n,+,\cdot,-, 0,1)$, where $- : X_n \to X_n$ is restriction of the  opposite map in $\mathbb{Z}$,  $+:X_n\times 
X_n\rightarrow\mathcal{P}(X_n)\setminus\{\emptyset\}$ is given by the rules:
 $$a+b=\begin{cases}
    \{a\},\,\mbox{ if }\,b\ne-a\mbox{ and }|b|\le|a| \\
    \{b\},\,\mbox{ if }\,b\ne-a\mbox{ and }|a|\le|b| \\
    \{-a,...,0,...,a\}\mbox{ if }b=-a
   \end{cases},$$
and $\cdot:X_n\times X_n\rightarrow X_n$ is given by the rules:
 $$a\cdot b=\begin{cases}
    \mbox{sgn}(ab)\max\{|a|,|b|\}\mbox{ if }a,b\ne0 \\
    0\mbox{ if }a=0\mbox{ or }b=0
   \end{cases}.$$
With the above rules we have that $(X_n,+,\cdot, -, 0,1)$ is a multiring which is not a hyperring for $n\ge2$ because $$n(1-1)=b\cdot\{-1,0,1\}=\{-n,0,n\}$$
and $n-n=X_n$. Note that $X_0=\{0\}$ and $X_1=\{-1,0,1\} =  Q_2$. 
\end{ex}

\begin{ex}[Multigroup of a Linear Order, 3.4 of \cite{viro2010hyperfields}]
Let $(\Gamma,\cdot,1,\le)$ be an ordered abelian group. We have an associated hyperfield structure $(\Gamma\cup\{0\},+,-,\cdot,0,1)$ with the rules $-a:=a$, $a\cdot0=0\cdot a:=0$ and
$$a+b:=\begin{cases}a \mbox{ if }a<b \\ b\mbox{ if }b<a\\
[0,a]\mbox{ if }a=b\end{cases}$$
Here we use the convention $0\le a$ for all $a\in\Gamma$. 
\end{ex} 

 Now, we treat about morphisms:

\begin{defn}\label{defn:morphism}
 Let $A$ and $B$ multirings. A map $f:A\rightarrow B$ is a morphism if for all $a,b,c\in A$:
 \begin{multicols}{2}
 \begin{enumerate}[i -]
  \item $f(1)=1$ and $f(0)=0$;
  \item $f(-a)=-f(a)$;
  \item $f(ab)=f(a)f(b)$;
  \item $c\in a+b\Rightarrow f(c)\in f(a)+f(b)$.
 \end{enumerate}
 \end{multicols}
  A morphism $f$ is \textbf{a full morphism} if for all $a,b\in A$, $$f(a+b)=f(a)+f(b)\mbox{ and }f(a\cdot b)=f(a)\cdot f(b).$$
\end{defn}

\begin{ex}
$ $
\begin{enumerate}[i -]  
\item The prime ideals  of a commutative ring (its Zariski spectrum) are classified by equivalence classes of morphisms into  algebraically closed  fields,  but they can be {\em uniformly classified} by a multiring morphism into the Krasner multifield $K = \{0,1\}$.

\item The orderings of a commutative ring (its real spectrum) are classified by classes of equivalence of ring homomorphims into  real closed fields, but they can be {\em uniformly classified} by a multiring morphism into the signal multifield  $Q_2 = \{-1,0,1\}$. 

\item A Krull valuation on a commutative ring with group of values $(G, +, -, 0, \leq)$  is just a morphism into the multifield $T_G=G \cup\{\infty\}$.

\end{enumerate}

\end{ex}

% \begin{lem}[Facts about full morphisms of multirings]\label{factstrong}
% Let $f:A\rightarrow B$ be a full morphism of multirings. Then
% \begin{enumerate}[a -]
%     \item For all $a_1,...,a_n\in A$,
%     $$f(a_1+...+a_n)=f(a_1)+...+f(a_n).$$
%     \item For all $a_1,...,a_n,b_1,...,b_n\in A$,
%     $$f[(a_1+b_1)(a_2+b_2)...(a_n+b_n)]=[f(a_1)+f(b_1)][f(a_2)+f(b_2))...(f(a_n)+f(b_n)].$$
%     \item For all $c_1,...,c_n,d_1,...,d_n\in A$,
%     $$f[c_1d_1+c_2d_2+...+c_nd_n]=f(c_1)f(d_1)+f(c_2)f(d_2)+...+f(c_n)f(d_n).$$
    
%     \item For all $a_0,...,a_n,\alpha\in A$,
%     $$f[a_0+a_1\alpha+...+a_n\alpha^n]=f(a_0)+f(a_1)f(\alpha)+...+f(a_n)f(\alpha)^n.$$
    
%     \item Let $A_1,A_2,A_3$ be multirings with injective morphisms (embeddings) $i_{12}:A_1\rightarrow A_2$, $i_{13}:A_1\rightarrow A_3$ and $i_{23}:A_2\rightarrow A_3$. 
%     $$\xymatrix@!=4pc{A_1\ar[r]^{i_{12}}\ar[dr]_{i_{13}} & A_2\ar[d]^{i_{23}} \\ & A_3}$$
%     Suppose that $i_{13}=i_{23}\circ i_{12}$ is a full embedding. If $i_{23}$ is a full embedding then $i_{12}$ is a full embedding.
    
% \end{enumerate}
% \end{lem}

 The concept of superring first appears in  (\cite{ameri2019superring}). There are many important advances and results in multiring/hyperring theory, and for instance, we recommend for example, the following papers: \cite{al2019some}, \cite{ameri2017multiplicative}, \cite{ameri2019superring}, \cite{ameri2020advanced}, \cite{massouros1985theory}, \cite{nakassis1988recent}, \cite{massouros1999homomorphic}, \cite{massouros2009join}.
 
\begin{defn}[Definition 5 in \textup{\cite{ameri2019superring}}]
 A superring is a structure $(S,+,\cdot, -, 0,1)$ such that:
 \begin{enumerate}[i -]
  \item $(S,+, -, 0)$ is a commutative multigroup. 
  \item $(S,\cdot,1)$ is a multimonoid. 
  \item $0$ is an absorbing element: $a\cdot0= \{0\} = 0 \cdot a$, for all $a\in S$.
  \item The weak/semi distributive law holds: 
   if $d\in c.(a+b)$ and $e\in(a+b)c$ then $d\in ca+cb$ and $e\in ac+bc$, for all $a,b,c,d,e\in S$.  
  \item  The rule of signals holds:  $-(ab)=(-a)b=a(-b)$, for all $a,b\in S$.
 \end{enumerate}
 A superring is commutative if $(S,\cdot,1)$ is commutative. A superdomain is a non-trivial superring without zero-divisors in this new context, i.e. whenever
 $$0\in a\cdot b \mbox{ iff }a=0 \mbox{ or } b=0$$
 A quasi-superfield is a non-trivial superring such that every nonzero element is invertible in this new context\footnote{
 For a quasi-superfield $F$, we \textbf{are not imposing} that $(S\setminus\{0\},\cdot,1)$ will be a commutative multigroup, i.e, that if $d\in a\cdot b$ then $b^{-1}\in a\cdot d^{-1}$.}, i.e. whenever
 $$\mbox{ For all }a \neq 0 \mbox{ exists }b\mbox{ such that }1\in a\cdot b.$$
 A superfield is a quasi-superfield which is also a superdomain. A superring is full if for all $a,b,c,d\in S$, $d\in c\cdot(a+b)$ iff $d\in ca+cb$.
\end{defn}

From now on, all superrings will be commutative (with exceptions sinalized).

% \begin{ex}
% Every multiring can be seen as a superring, in the very same fashion of \ref{ex:1.3}(a). Our main example of superring is the superring of multipolynomials $R[X]$ over a multiring $R$. For more details, see \cite{roberto2021superrings}, \cite{ameri2019superring} or \cite{davvaz2016codes}.
% \end{ex}

Now we treat about morphisms.

\begin{defn}
 Let $A$ and $B$ superrings. A map $f:A\rightarrow B$ is a morphism if for all $a,b,c\in A$:
 \begin{multicols}{2}
  \begin{enumerate}[i -]
  \item $f(0)=0$;
  \item $f(1)=1$;
  \item $f(-a)=-f(a)$;
  \item $c\in a+b\Rightarrow f(c)\in f(a)+f(b)$;
  \item $c\in a\cdot b\Rightarrow f(c)\in f(a)\cdot f(b)$.
 \end{enumerate} 
 \end{multicols}
 A morphism $f$ is \textbf{a full morphism} if for all $a,b\in A$, $$f(a+b)=f(a)+f(b)\mbox{ and }f(a\cdot b)=f(a)+f(b).$$
\end{defn}

% \hlm{Example: If $R$ is a multiring, then the canonical map $r \in R \mapsto (r,0, 0 \cdots) \in R[x]$ is a (strong) morphism of superrings.}  

From now on, we use the following conventions: Let $(R,+,\cdot, -, 0,1)$ be a superring, $p\in\mathbb N$ and consider a $p$-tuple $\vec a=(a_0,a_1, ..., a_{p-1})$. We define the finite sum by:
\begin{align*}
 x\in\sum_{i<0}a_i&\mbox{ iff }x=0, \\
 x\in\sum_{i<p}a_i&\mbox{ iff }x\in y+a_{p-1}\mbox{ for some }y\in\sum_{i<p-1}a_i, \text{if} \ p \geq 1.
\end{align*}
and the finite product by:
\begin{align*}
 x\in\prod_{i<0}a_i&\mbox{ iff }x=1, \\
 x\in\prod_{i<p}a_i&\mbox{ iff }x\in y\cdot a_{p-1}\mbox{ for some }y\in\prod_{i<p-1}a_i, \text{if} \ p \geq 1.
\end{align*}

Thus, if $(\vec{a}_0, \vec{a}_1,...,\vec{a}_{p-1})$ is a $p$-tuple of tuples $\vec{a}_i = (a_{i0}, a_{i1},..., a_{i{m_i}})$, 
then we have the finite sum of finite products:
\begin{align*}
 x\in\sum_{i<0}\prod_{j<{m_i}}a_{ij}&\mbox{ iff }x=0, \\
 x\in\sum_{i<p}\prod_{j<{m_i}}a_{ij}&\mbox{ iff }x\in y+z\mbox{ for some }y\in
 \sum_{i<{p-1}}\prod_{j<{m_i}}a_{ij}
\mbox{ and } z\in\prod_{j<m_{p-1}}a_{{p-1},j}, \text{if} \ p \geq 1.
\end{align*}

\begin{lem}[Basic Facts]\label{lembasic1}
    Let $A$ be a superring.
    \begin{enumerate}[a -]
        \item For all $n\in\mathbb N$ and all $a_1,...,a_n\in A$, the sum $a_1+...+a_n$ and product $a_1\cdot...\cdot  a_n$ does not depends on the order of the entries.
        \item If $A$ is a full superdomain, then $ax=ay$ for some $a\ne0$ imply $x=y$.
        \item If $A$ is full, then for all $d,a_1,...,a_n\in A$
        $$d(a_1+...+a_n)=da_1+...+da_n.$$
        \item Suppose $A$ is a full superdomain and let $a\in A\setminus\{0\}$. If $1\in (a\cdot b)\cap(a\cdot c)$ then $b=c$.
        \item (Newton's Binom Formula) For $n\ge1$ and $X\subseteq A$ denote
            $$nX:=\sum^n_{i=1}X.$$
            Then for $A,B\subseteq A$,
            $$(A+B)^n\subseteq\sum^n_{j=0}\binom{n}{j}A^jB^{n-j}.$$
    \end{enumerate}
\end{lem}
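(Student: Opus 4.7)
For (a), I would induct on $n$. The base case $n = 2$ is axiom M4 applied to the additive multigroup and the multiplicative multimonoid. For the inductive step, a combination of M3 (reassociation) and M4 (commutativity) shows that any adjacent transposition preserves the sum (resp.\ product) as a set; since every permutation of $n$ letters factors as a product of adjacent transpositions, the order-independence follows.

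For (c), I would induct on $n$, with the case $n = 2$ being exactly the defining property of a full superring, $d(a+b) = da + db$. For the step, I would unfold $a_1 + \cdots + a_n = \bigcup_{y \in a_1 + \cdots + a_{n-1}} (y + a_n)$ according to the recursive convention fixed before the lemma, apply fullness to get $d(y+a_n) = dy + da_n$ for each $y$, invoke the inductive hypothesis to rewrite $\bigcup_y dy$ as $da_1 + \cdots + da_{n-1}$, and then commute the union past the outer sum using the set-sum convention $S + T = \bigcup_{s \in S,\, t \in T}(s + t)$.

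Parts (b) and (d) reduce to the superdomain hypothesis via fullness and the rule of signs. In (b), if $ax = ay$, then any $z \in ax = ay$ gives $0 \in z + (-z) \subseteq ax + (-ay) = a(x + (-y))$ by the sign rule and the case $n=2$ of (c); hence $0 \in a \cdot w$ for some $w \in x + (-y)$, and since $a \neq 0$ the superdomain property forces $w = 0$, so that M1 and M2 read additively yield $x = y$. For (d), $1 \in ab$ and $1 \in ac$ give $0 \in 1 + (-1) \subseteq ab + a(-c) = a(b + (-c))$, and the same chain delivers $b = c$.

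For (e), I would induct on $n$: granted $(A+B)^{n-1} \subseteq \sum_{j=0}^{n-1}\binom{n-1}{j} A^j B^{n-1-j}$, I would write $(A+B)^n = (A+B)(A+B)^{n-1}$, push $(A+B)$ through the sum using weak distributivity (which already supplies the correct $\subseteq$), and regroup the two resulting index-shifted sums via the Pascal identity $\binom{n-1}{j} + \binom{n-1}{j-1} = \binom{n}{j}$. The main obstacle across the whole lemma lies in (b) and (d): passing from the set-level membership $0 \in a(b + (-c))$ to the existence of a \emph{single} witness $w \in b + (-c)$ with $0 \in aw$ is precisely where the fullness hypothesis is used in earnest, since weak distributivity alone yields only the wrong containment for this step.
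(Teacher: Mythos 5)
The paper states this lemma as ``Basic Facts'' and supplies no proof at all, so there is no argument of the authors' to compare against; your proposal fills the gap correctly and in the way the authors surely intend. In particular you identify the right mechanisms: M3/M4 and adjacent transpositions for (a), the recursive set-sum convention plus the definition of fullness for (c), the reduction of (b) and (d) to $0\in a(x-y)$ followed by the superdomain property and M1/M2 (with fullness being exactly what licenses the single witness $w\in x-y$ with $0\in aw$), and the one-sided weak distributive law together with $mX+kX=(m+k)X$ and Pascal's identity for the containment in (e).
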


\begin{lem}[Facts about full morphisms of superrings]\label{factstrong2}
Let $f:A\rightarrow B$ be a full morphism of superrings. Then
\begin{enumerate}[a -]
    \item For all $a_1,...,a_n\in A$,
    $$f(a_1+...+a_n)=f(a_1)+...+f(a_n).$$
    \item For all $a_1,...,a_n,b_1,...,b_n\in A$,
    $$f[(a_1+b_1)(a_2+b_2)...(a_n+b_n)]=(f(a_1)+f(b_1))(f(a_2)+f(b_2))...(f(a_n)+f(b_n)).$$
    \item For all $c_1,...,c_n,d_1,...,d_n\in A$,
    $$f(c_1d_1+c_2d_2+...+c_nd_n)=f(c_1)f(d_1)+f(c_2)f(d_2)+...+f(c_n)f(d_n).$$
    
    \item For all $a_0,...,a_n,\alpha\in A$,
    $$f(a_0+a_1\alpha+...+a_n\alpha^n)=f(a_0)+f(a_1)f(\alpha)+...+f(a_n)f(\alpha)^n.$$
    
    \item Let $A_1,A_2,A_3$ be superrings with injective morphisms (embeddings) $i_{12}:A_1\rightarrow A_2$, $i_{13}:A_1\rightarrow A_3$ and $i_{23}:A_2\rightarrow A_3$. 
    $$\xymatrix@!=4pc{A_1\ar[r]^{i_{12}}\ar[dr]_{i_{13}} & A_2\ar[d]^{i_{23}} \\ & A_3}$$
    Suppose that $i_{13}=i_{23}\circ i_{12}$ is a full embedding. If $i_{23}$ is a full embedding then $i_{12}$ is a full embedding.
\end{enumerate}
\end{lem}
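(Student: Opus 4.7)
The plan is to prove (a) by induction on $n$, observe that an identical induction yields the multiplicative analog $f(a_1 \cdots a_n) = f(a_1) \cdots f(a_n)$ (as a set equality in $B$), and then deduce (b)--(d) as algebraic rearrangements; part (e) is a separate short diagram chase. Throughout I adopt the convention $f(S) := \{f(s) : s \in S\}$ for any $S \subseteq A$, and I treat set-valued sums/products as the natural unions of elementwise sums/products. For (a), the base case $n = 2$ is literally the fullness hypothesis. For the inductive step, the recursive definition of the finite sum gives
$$\sum_{i<n} a_i = \bigcup_{y \in \sum_{i<n-1} a_i} (y + a_{n-1}),$$
so applying $f$ elementwise, using fullness on each $y + a_{n-1}$, and then the induction hypothesis $f(\sum_{i<n-1} a_i) = \sum_{i<n-1} f(a_i)$ closes the induction. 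Replacing $+$ by $\cdot$ word for word gives the multiplicative companion.

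Parts (b), (c), (d) are then routine consequences. For (c), apply (a) to the sum of the (set-valued) products $c_i d_i$ and replace each $f(c_i d_i)$ by $f(c_i) f(d_i)$ via fullness. For (d), specialise (c) to $c_i = a_i$, $d_i = \alpha^i$, using the multiplicative analog of (a) to rewrite $f(\alpha^i) = f(\alpha)^i$. For (b), expand
$$(a_1 + b_1) \cdots (a_n + b_n) = \bigcup \{x_1 \cdots x_n : x_i \in a_i + b_i\},$$
apply the multiplicative analog to each $f(x_1 \cdots x_n)$, and use fullness for two summands to rewrite $\{f(x) : x \in a_i + b_i\} = f(a_i) + f(b_i)$; reassembling these pieces yields the right-hand side.

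For (e), it suffices to show $i_{12}(a + b) = i_{12}(a) + i_{12}(b)$ (and analogously for products). The inclusion $\subseteq$ is automatic because $i_{12}$ is a morphism. For $\supseteq$, take $w \in i_{12}(a) + i_{12}(b)$ and apply $i_{23}$: by fullness of $i_{23}$ and then of $i_{13}$,
$$i_{23}(w) \in i_{23}(i_{12}(a)) + i_{23}(i_{12}(b)) = i_{13}(a) + i_{13}(b) = i_{13}(a + b) = i_{23}(i_{12}(a + b)),$$
so $i_{23}(w) = i_{23}(v)$ for some $v \in i_{12}(a+b)$, and injectivity of $i_{23}$ forces $w = v$. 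The main obstacle throughout is purely bookkeeping: one must be scrupulous about the extension of $f$ to subsets and the union-over-elementwise conventions for $+$ and $\cdot$, since the same symbol denotes several a priori different operations. Once those conventions are fixed, each part reduces either to an induction driven by the defining property of fullness or, in (e), to a cancellation using injectivity.
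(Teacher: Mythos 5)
Your proof is correct. The paper itself states Lemma \ref{factstrong2} without any proof, treating it as routine, so there is no official argument to diverge from; your induction on the recursive definitions of finite sums and products for (a)--(d), together with the careful union-over-elements conventions, is exactly the bookkeeping the paper leaves implicit. One small observation on (e): your argument only ever uses that $i_{23}$ is an injective morphism (to push $w\in i_{12}(a)+i_{12}(b)$ forward and then cancel), so the hypothesis that $i_{23}$ is \emph{full} is not actually needed --- your proof establishes a slightly stronger statement than the one claimed.
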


\begin{defn}\label{char}
$ $
 \begin{enumerate}[i -]
  \item The \textbf{characteristic} of a superring is the smaller integer $n \geq 1$ such that
  $$0\in\sum_{i<n}1,$$
  otherwise the characteristic is zero. For full superdomains, this is equivalent to say that $n$ is the smaller integer such that
  $$\mbox{For all }a,\,0\in\sum_{i<n}a.$$
 
  \item An \textbf{ideal} of a superring $A$ is a non-empty subset $\mathfrak{a}$ of $A$ such that $\mathfrak{a}+\mathfrak{a}\subseteq\mathfrak{a}$ and $A\mathfrak{a}\subseteq\mathfrak{a}$. We denote
  $$\mathfrak I(A)=\{I\subseteq A:I\mbox{ is an ideal}\}.$$
  
  \item  Let $S$ be a subset of a superring $A$. We define the \textbf{ideal generated by} $S$ as 
  $$\langle S\rangle:=\bigcap\{\mathfrak{a}\subseteq A\mbox{ ideal}:S\subseteq\mathfrak{a}\}.$$
  If $S=\{a_1,...,a_n\}$, we easily check that
  $$\langle a_1,...,a_n\rangle=\sum Aa_1+...+\sum Aa_n,\,\mbox{where }\sum Aa=\bigcup\limits_{n\ge1}\{\underbrace{Aa+...+Aa}_{n\mbox{ times}}\}.$$ 
  Note that if $A$ is a full superring, then $\sum Aa=Aa$.
  
  \item An ideal $\mathfrak{p}$ of $A$ is said to be \textbf{prime} if $1\notin\mathfrak{p}$ and $ab\subseteq\mathfrak{p}\Rightarrow a\in\mathfrak{p}$ or $b\in\mathfrak{p}$. We denote 
  $$\mbox{Spec}(A)=\{\mathfrak{p}\subseteq A:\mathfrak{p}\mbox{ is a prime ideal}\}.$$
  
  \item An ideal $\mathfrak{p}$ of $A$ is said to be \textbf{strongly prime} if $1\notin\mathfrak{p}$ and $ab\cap\mathfrak{p}\ne\emptyset\Rightarrow a\in\mathfrak{p}$ or $b\in\mathfrak{p}$. We denote 
  $$\mbox{Spec}_s(A)=\{\mathfrak{p}\subseteq A:\mathfrak{p}\mbox{ is a strongly prime ideal}\}.$$
  Note that every strongly prime ideal is prime.
  
  \item An ideal $\mathfrak{m}$ is maximal if it is proper and for all ideals $\mathfrak{a}$ with $\mathfrak{m}\subseteq\mathfrak{a}\subseteq A$ then $\mathfrak{a}=\mathfrak{m}$ or $\mathfrak{a}=A$.
  
  \item For an ideal $I\subseteq A$, we define operations in the quotient $A/I=\{x+I:x\in A\}=\{\overline x:x\in A\}$, by the rules
  \begin{align*}
      \overline x+\overline y&=\{\overline z:z\in x+y\}\\
      \overline x\cdot\overline y&=\{\overline z:z\in xy\}
  \end{align*}
    for all $\overline x,\overline y\in A/I$.
 \end{enumerate}
\end{defn}

\begin{rem}
$ $
 \begin{enumerate}[a -]
     \item If $A$ is a multiring, then every prime ideal is strongly prime. We do not know if this is the case for general superrings.
     
     \item If $A$ is a multiring, then every maximal ideal is prime (Proposition 1.7 of \cite{ribeiroanel}). For a general superring $A$, we do not know if a maximal ideal is prime.
     
     \item In his Ph.D Thesis \cite{ribeiroanel}, H. Ribeiro deals with elements \emph{weakly invertible} on a multiring $A$. This could be an anternative in dealing with the above questions.
 \end{enumerate}
\end{rem}

With all conventions and notations above, we obtain the following Lemma, which recover for superrings some properties holding for rings (and multirings).

\begin{lem}\label{lem1}
 Let $A$ be a superring and $I$ an ideal.
 \begin{enumerate}[i -]
  \item $I=A$ if and only if $1\in I$.
  \item $A/I$ is a superring. Moreover, if $A$ is full then $A/I$ is also full.
  \item $I$ is strongly prime if and only if $A/I$ is a superdomain.
 \end{enumerate}
 If $A$ is full, then
 \begin{enumerate}
     \item [iv -] $I=A$ if and only if $1\in I$, which occurs if and only if $A^*\cap I\ne\emptyset$ (in other words, if and only if 
$I$ contains an invertible element).
     \item [v -] $A$ is a superfield if and only if $\mathfrak I(A)=\{0,A\}$.
     \item [vi -] $I$ is maximal if and only if $A/I$ is a superfield.
 \end{enumerate}
\end{lem}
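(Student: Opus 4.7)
The plan is to prove the six items essentially in sequence, with later items reducing to earlier ones together with the standard ideal-correspondence machinery for quotients.

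For (i), only the direction $1 \in I \Rightarrow I = A$ has content: using the multimonoid axiom $a \in 1 \cdot a$ together with $A \cdot I \subseteq I$, every $a \in A$ lies in $1 \cdot a \subseteq A \cdot I \subseteq I$. Item (iv) is then immediate: if $u \in A^* \cap I$ has an inverse $v$, then $1 \in u \cdot v \subseteq I \cdot A \subseteq I$, and (i) yields $I = A$.

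For (ii), I would verify the superring axioms of $A/I$ by lifting to representatives in $A$. The additive multigroup axioms M1--M4 on cosets reduce to the corresponding axioms in $A$ (for M3, if $\bar{t} \in (\bar{a}+\bar{b})+\bar{c}$ then a representative satisfies $t' \in (a+b)+c$, one applies M3 in $A$, and projects back), the multiplicative multimonoid axioms lift in the same way, and the weak distributive law transfers because $\bar{d} \in \bar{c}(\bar{a}+\bar{b})$ lifts to some $d' \in c(a+b)$ in $A$, hence to $d' \in ca+cb$, hence to the quotient statement; in the full case one uses the \emph{iff} form of distributivity on both sides of the equivalence. For (iii), unfolding the quotient definitions shows that $\bar{0} \in \bar{a}\bar{b}$ is literally $ab \cap I \neq \emptyset$, so strong primeness of $I$ is equivalent to the superdomain property of $A/I$.

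For (v) $\Rightarrow$, any non-zero ideal contains a non-zero (hence invertible) element, and (iv) forces it to be $A$. For $\Leftarrow$, given $a \neq 0$ the principal ideal $\langle a \rangle$ reduces to $Aa$ by fullness, is non-zero, so equals $A$; thus $1 \in A \cdot a$ and $a$ is invertible. To upgrade the resulting quasi-superfield to a superfield I plan to study the annihilator $\mathrm{Ann}(a) = \{x \in A : 0 \in a \cdot x\}$, showing it is an ideal (closure under $+$ using fullness; closure under $A$-multiplication using the absorbing law $c \cdot 0 = 0$), and deducing from the hypothesis that it is either $0$ or $A$. Finally, (vi) follows from (v) applied to the full superring $A/I$, combined with the inclusion-preserving bijection $J \leftrightarrow J/I$ between ideals of $A$ containing $I$ and ideals of $A/I$, which drops out of (ii).

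The main obstacle I anticipate is the superdomain upgrade in (v)$\Leftarrow$. Because the multimonoid axiom only asserts $a \in 1 \cdot a$ rather than $1 \cdot a = \{a\}$, ruling out the alternative $\mathrm{Ann}(a) = A$ requires showing that $0 \in a \cdot 1$ implies $a = 0$, which must be extracted carefully from the superring axioms and the full hypothesis rather than read off the definitions. The other fiddly point is the bookkeeping of the weak distributive law in $A/I$ under the multivalued nature of both operations, but this is mechanical once representatives are fixed.
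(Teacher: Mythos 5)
Your handling of (i)--(iv) is sound, and your plan for (ii)--(iii) is the right one, although the step you dismiss as mechanical --- well-definedness of the quotient operations on cosets, which needs $I+I\subseteq I$, $AI\subseteq I$, additive associativity and the weak distributive law --- is where essentially all the work of (ii) lives. The genuine gap sits exactly where you predicted it, in the superdomain upgrade of (v)$\Leftarrow$, and it is not a matter of careful extraction: the route you propose cannot be completed. First, $\mathrm{Ann}(a)=\{x\in A: 0\in a\cdot x\}$ need not be an ideal even when $A$ is full: from $0\in ax$, $0\in ay$ and $z\in x+y$, fullness only gives $0\in a(x+y)=ax+ay$, i.e.\ $0\in az'$ for \emph{some} $z'\in x+y$, not for every such $z$; the same ``some versus all'' problem blocks closure under multiplication by $A$. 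Second, the step ``$0\in a\cdot 1$ implies $a=0$'' is not a consequence of the axioms. Take $S=\{0,1\}$ with the additive structure of the Krasner multifield ($1+1=\{0,1\}$, $-x=x$) and the multivalued product $1\cdot 1=\{0,1\}$, $0\cdot x=\{0\}$. One checks directly that $S$ is a full superring (the multiplicative multimonoid axiom only demands $a\in 1\cdot a$, not $1\cdot a=\{a\}$), that its only ideals are $\{0\}$ and $S$, and that every nonzero element is invertible; yet $0\in 1\cdot 1$ with $1\neq 0$, so $S$ is a quasi-superfield but not a superdomain. Hence from $\mathfrak I(A)=\{0,A\}$ and fullness your argument can deliver only the quasi-superfield half of (v); the superdomain half fails outright for this $S$, so no argument from the stated axioms alone can close the gap, and (vi) inherits the same defect through the ideal correspondence.

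Since the paper states this lemma without printing a proof, there is nothing concrete to compare your argument against; but note that the Proposition immediately following it characterizes prime ideals by the quotient being a \emph{quasi}-superdomain rather than a superdomain, which is consistent with the obstruction you ran into: ideal-theoretic hypotheses control invertibility but not the absence of zero-divisors once the product is genuinely multivalued. The honest repair is either to weaken ``superfield'' to ``quasi-superfield'' in (v) and (vi), or to restrict to the multiring case, where the product is single-valued, $a\cdot 1=\{a\}$, and the classical computation $b=1\cdot b=(ca)b=c(ab)=c\cdot 0=0$ makes your annihilator detour unnecessary.
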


\begin{prop}
 Let $A$ be a superring and $I$ an ideal.
 \begin{enumerate}[i -]
     \item If $I$ is a maximal ideal, then it is prime.
     
     \item The ideal $I$ is prime if, and only if, $A/I$ is  quasi-superdomain\footnote{A superring $B$ is called quasi-superdomain if given $a,b \in B$ with $ab = \{0\}$, then $a = 0$ or $b = 0$}.
     
     \item (Prime Ideal Theorem) Let $S \subseteq A$ be a multiplicative set ($1 \in S$ and $S \cdot S \subseteq S$). Suppose that $S \cap I = \emptyset$. Then there is a prime ideal $p$ such that $I \subseteq p$ and $S \cap p = \emptyset$.
 \end{enumerate}
\end{prop}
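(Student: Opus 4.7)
The plan is to adapt the classical ring-theoretic proofs, the main additional care being with the multivalued sum and product; the weak distributive law together with the recursive description of finitely generated ideals recalled in the previous definition are the main technical tools.

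Part (ii) is a direct unwinding of the quotient construction. In $A/I$ the equality $\overline a\cdot\overline b = \{\overline 0\}$ unfolds to $a\cdot b \subseteq I$, and $\overline a = \overline 0$ unfolds to $a\in I$; thus the prime-ideal condition ``$ab\subseteq I$ implies $a\in I$ or $b\in I$'' translates literally into the quasi-superdomain condition on $A/I$, while non-triviality $\overline 1 \neq \overline 0$ matches $1\notin I$.

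Both (i) and (iii) follow the same pattern once a generated ideal of the form $\langle J\cup\{a\}\rangle$ has been introduced, the only difference being how one produces a witness outside the ideal in question. For (i), $I$ maximal and $a\notin I$ force $\langle I\cup\{a\}\rangle = A$, so $1\in I+\sum Aa$, i.e.\ $1\in i+y$ for some $i\in I$ and an iterated finite sum $y$ of elements of $Aa$. Multiplying by $b$ and using weak distributivity gives $b\in 1\cdot b\subseteq (i+y)\cdot b\subseteq ib+yb$; both $ib$ and $yb$ sit inside $I$ (the former by the ideal axiom, the latter by induction on the number of summands of $y$, using $(ra)b=r(ab)\subseteq rI\subseteq I$). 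Hence $b\in I+I\subseteq I$. For (iii), apply Zorn's lemma to the family of ideals $J$ with $I\subseteq J$ and $J\cap S=\emptyset$ (nonempty, since it contains $I$, and inductive, since the union of a chain of ideals is an ideal and disjointness from $S$ is preserved), yielding a maximal element $p$ disjoint from $S$; note $1\in S$ gives $1\notin p$. If $a,b\notin p$ but $ab\subseteq p$, then $\langle p\cup\{a\}\rangle$ and $\langle p\cup\{b\}\rangle$ strictly enlarge $p$ and so must meet $S$; pick $s_i$ in the respective intersections. Since $S$ is multiplicative the nonempty product $s_1\cdot s_2$ lies in $S$, while the same weak-distributivity argument as above shows $\langle p,a\rangle\cdot\langle p,b\rangle\subseteq\langle p,ab\rangle=p$ (the last equality from $ab\subseteq p$), so $s_1\cdot s_2\subseteq p$, contradicting $S\cap p=\emptyset$.

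The main obstacle is not conceptual but technical: each standard identity used above (sum of two ideals is an ideal, product of two finitely generated ideals is contained in the ideal generated by the pairwise products, $y\cdot b\subseteq I$ for $y\in\sum Aa$ whenever $ab\subseteq I$, and so on) must be re-derived in the multivalued setting by inductions that match the recursive shape of $\langle\cdot\rangle$, manipulating subsets of $A$ rather than single elements and invoking the weak distributive law together with the multigroup associativity and commutativity axioms M3, M4 at each step. Once these bookkeeping lemmas are in place, the skeletons above go through exactly as in classical commutative algebra.
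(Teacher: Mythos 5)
Your argument is correct and is the standard one the authors evidently intend: the paper states this Proposition without proof, and your treatment of (ii) via the quotient operations, of (i) via $1\in I+\sum Aa$ and weak distributivity, and of (iii) via Zorn plus $\langle p,a\rangle\cdot\langle p,b\rangle\subseteq\langle p,ab\rangle=p$ matches the toolkit the paper sets up (the description of $\langle S\rangle$ via $\sum Aa$, the weak distributive law, and Lemma \ref{lem1}(i) to get $1\notin\mathfrak m$ from properness). The only point worth making explicit in (iii) is that the contradiction needs $s_1\cdot s_2\neq\emptyset$, i.e.\ non-emptiness of the multivalued product, which the paper assumes implicitly throughout.
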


\subsection{On commutative superrings of polynomials}

Even if the rings-like multi-algebraic structure have been studied for more than 70 years, the developments of notions of polynomials in the ring-like multialgebraic structure seems to have a more significant development only from the last decade: for instance in \cite{jun2015algebraic} some notion of multi polynomials is introduced to obtain some applications to algebraic and tropical geometry, in \cite{ameri2019superring} a more detailed account of variants of concept of multipolynomials over hyperrings is applied to get a form of Hilbert's Basissatz.

Here we will stay close to the perspective in  \cite{ameri2019superring}: let $(R,+,-,\cdot,0,1)$ be a superring and set
$$R[X]:=\{(a_n)_{n\in\omega}\in R^\omega:\exists\,t\,\forall n(n\ge t\rightarrow a_n=0)\}.$$
Of course, we define the \textbf{degree} of $(a_n)_{n\in\omega}\ne\bm0$ to be the smallest $t$ such that $a_n=0$ for all $n>t$. 

Now define the binary multioperations $+,\cdot :  R[X]\times R[X] \to  \mathcal P^*(R[X])$, a unary operation $-:R[X]\rightarrow R[X]$ and elements $0,1\in R[X]$ by
\begin{align*}
 (c_n)_{n\in\omega}\in (a_n)_{n\in\omega}+(b_n)_{n\in\omega}&\mbox{ iff }\forall\,n(c_n\in a_n+b_n) \\
  (c_n)_{n\in\omega}\in((a_n)_{n\in\omega}\cdot (b_n)_{n\in\omega}&\mbox{ iff }\forall\,n
  (c_n\in a_0\cdot b_n+a_1\cdot b_{n-1}+...+a_n\cdot b_0) \\
  -(a_n)_{n\in\omega}&=(-a_n)_{n\in\omega} \\
  0&:=(0)_{n\in\omega} \\
  1&:=(1,0,...,0,...)
\end{align*}
For convenience, we denote elements of $R[X]$ by $\alpha=(a_n)_{n\in\omega}$. Beside this, we denote
\begin{align*}
 1&:=(1,0,0,...), \\
 X&:=(0,1,0,...), \\
 X^2&:=(0,0,1,0,...)
\end{align*}
etc. In this sense, our ``monomial'' $a_iX^i$ is denoted by $(0,...0,a_i,0,...)$, where $a_i$ is in the $i$-th position; in 
particular, we will denote ${\underline{b}} = (b,0,0,...)$ and we frequently identify $b \in R \leftrightsquigarrow 
{\underline{b}} \in R[X]$.

The properties stated  in the Lemma below immediately follows from the definitions involving $R[X]$:

\begin{lem}\label{lemperm}
 Let $R$ be a superring and $R[X]$ as above and $n,m\in \mathbb N$.
 \begin{enumerate}[a -]
  \item $\{X^{n+m}\}=X^n\cdot X^m$.
  \item For all $a\in R$, $\{aX^n\}= {\underline{a}}\cdot X^n$.
  \item Given $\alpha=(a_0,a_1,...,a_n,0,0,...)\in R[X]$, with with $\deg\alpha \leq n$ and $m\ge1$, we have
  $$\alpha X^m=(0,0,...,0,a_0,a_1,...,a_n,0,0,...)=a_0X^m+a_1X^{m+1}+...+a_nX^{m+n}.$$
  \item For $\alpha=(a_n)_{n\in\omega}\in R[X]$, with $\deg\alpha=t$, 
  $$\{\alpha\}=a_0\cdot1+a_1\cdot X+...+a_t\cdot X^t=a_0+X(a_1+a_2X+...+a_nX^{t-1}).$$
%   \item For all $a,b,c\in R$, $cX^k(a + b) = acX^k+ bcX^k$.
  \item $R[X]$ is a superdomain iff $R$ is a superdomain.
  \item $R[X]$ is a superring.
  \item The map $a \in R \mapsto {\underline{a}} = (a,0, \cdots,0, \cdots)$ defines a full embedding $R\rightarrowtail R[X]$.
  \item For an ordinary ring $R$ (identified with a strict suppering), the superring $R[X]$ is naturally isomorphic to (the superring associated to) the ordinary ring of polynomials in one variable over $R$.
   \end{enumerate}
\end{lem}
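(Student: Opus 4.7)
The plan is to unfold the coordinate-wise definitions of $+$ and $\cdot$ on $R[X]$ and verify each item essentially by inspection, relying on the fact that $0$ is absorbing and $1$ is the identity for $\cdot$ in $R$. For (a), the convolution formula for $X^n\cdot X^m$ collapses because $(X^n)_i=\delta_{i,n}$ and $(X^m)_j=\delta_{j,m}$: absorption by $0$ kills every term of each convolution sum except the one at indices $(n,m)$, which contributes $\{1\}$ in coordinate $n+m$. Part (b) is entirely analogous, producing a single non-zero coordinate at position $n$ of value $\{a\}$. For (c), I apply the convolution formula to $\alpha\cdot X^m$ directly and note that, coordinate by coordinate, every summand vanishes except $a_{k-m}\cdot 1=\{a_{k-m}\}$; the rewriting as $a_0X^m+\cdots+a_nX^{m+n}$ then follows from the coordinate-wise definition of $+$ together with (b). Part (d) is a formal consequence of (b) and (c) together with the inductive definition of the finite sum.

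For (e), the forward implication is supplied by the embedding $R\hookrightarrow R[X]$ of (g). For the converse, given $\alpha,\beta\in R[X]\setminus\{\mathbf 0\}$ with leading nonzero coefficients $a_n,b_m$, the $(n+m)$-th coordinate of any element of $\alpha\cdot\beta$ lies in $a_n\cdot b_m$, because every other summand of the corresponding convolution involves an $a_i$ with $i>n$ or a $b_j$ with $j>m$ (hence $=0$); since $R$ is a superdomain we have $0\notin a_n\cdot b_m$, and therefore no tuple in $\alpha\cdot\beta$ is identically zero. Part (f) is a routine axiom check, carried out coordinate-wise: the commutative multigroup axioms for $+$, the multimonoid axioms for $\cdot$, absorption of $0$, the semi-distributive law, and the sign rule all transfer from $R$ via the definitions.

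Parts (g) and (h) are short. For (g), $a\mapsto \underline{a}$ is clearly injective and preserves $0$, $1$, and $-$; for both $+$ and $\cdot$, the coordinate-wise definitions force $\underline{a}+\underline{b}=\{\underline{c}:c\in a+b\}$ and $\underline{a}\cdot\underline{b}=\{\underline{c}:c\in a\cdot b\}$, so the embedding is full. For (h), when $R$ is a strict superring (all operations single-valued), each multivalued recipe above collapses to the familiar coefficient-wise formulas, and the isomorphism with the ordinary polynomial ring is the tautology $(a_n)_{n\in\omega}\leftrightarrow\sum_n a_nX^n$.

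The main obstacle is associativity of the convolution product inside (f). Because superring distributivity provides only a one-sided inclusion (the weak/semi-distributive law), the standard ``interchange of summation order'' argument that proves $(\alpha\beta)\gamma=\alpha(\beta\gamma)$ in ordinary rings must be re-examined in the multivalued setting: one uses axiom M3 of the underlying multigroup repeatedly, together with the semi-distributive inclusion in $R$, to establish membership in both directions. Particular care is needed with the placement of existential quantifiers, to obtain a genuine multimonoid structure on $R[X]$ rather than a weaker structure.
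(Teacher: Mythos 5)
Your proposal is correct and follows the same route the paper takes: the paper offers no argument beyond ``the properties immediately follow from the definitions involving $R[X]$,'' and your coordinate-wise unfolding of the convolution formulas is exactly that verification, carried out in more detail. You also rightly single out associativity of the product in item (f) as the one point that is not mere inspection (weak distributivity gives only one inclusion, so M3 must be invoked carefully); the paper glosses over this entirely, so your sketch, while not completing that step, is at least as thorough as the source.
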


Lemma \ref{lemperm} allow us to deal with the superring $R[X]$ as usual. In other words, we can assume that for $\alpha\in R[x]$, there exists 
$a_0,a_1,...,a_n\in R$ such that $\alpha=a_0+a_1X+...+a_nX^n$, and then, we can work simply denoting $\alpha=f(X)$, as usual. For example, combining the definitions and all facts above we get
$$(x-a)(x-b)=x^2+(a-b)x+ab=\{x^2+dx+e:d\in a-b\mbox{ and }e\in ab\}.$$

\begin{rem} 
If $R$ is a full superdomain, does not hold in general that $R[X]$ is also a full superdomain. In fact, even if $R$ is a 
hyperfield, there are examples, e.g. $R = K, Q_2$, such that $R[X]$ is not a full superdomain (see \cite{ameri2019superring}).
\end{rem}

\begin{defn}
 The superring $R[X]$ will be called the \textbf{superring of polynomials} with one variable over $R$. The elements of $R[X]$ will be called 
 polynomials. We denote $R[X_1,...,X_n]:=(R[X_1,...,X_{n-1}])[X_n]$.
\end{defn}

\begin{lem}[Adapted from Theorem 5 of \cite{ameri2019superring}]\label{degreelemma}
Let $R$ be a superring and $f,g\in R[X]\setminus\{0\}$.
\begin{enumerate}[i -]
    \item If $t(X)\in f(X)+g(X)$ and $f\ne-g$ then $$\min\{\deg(f),\deg(g)\}\le\deg(t)\le\max\{\deg(f),\deg(g)\}.$$
    \item If $R$ is a superdomain and $t(X)\in f(X)g(X)$, then $\deg(t)=\deg(f)+\deg(g)$. In particular, if $f_1(X),f_2(X),...,f_n(X)\ne0$ and $t(X)\in f_1(X)f_2(X)...f_n(X)$, then
    $$\deg(t)=\deg(f_1)+\deg(f_2)+...+\deg(f_n).$$
    \item (Partial Factorization) Let $R$ be a superdomain, $\deg(f)=n$ and $f\in (X-a_1)(X-a_n)...(X-a_p)$. Then $p=n$.
\end{enumerate}
\end{lem}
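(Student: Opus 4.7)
}

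The plan is to work coefficient-wise throughout, exploiting the fact that both sum and product in $R[X]$ are defined componentwise from the superring operations of $R$.

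For part (i), I would start from the definition: $t(X)\in f(X)+g(X)$ means $t_n\in f_n+g_n$ for every $n\in\omega$. Set $M:=\max\{\deg f,\deg g\}$ and $m:=\min\{\deg f,\deg g\}$. For $n>M$ both $f_n$ and $g_n$ vanish, so $t_n\in 0+0=\{0\}$ by axiom M2, giving $\deg t\le M$. For the lower bound I would split cases. If $\deg f\ne\deg g$, say $\deg g=M>m=\deg f$, then $f_M=0$ so $t_M\in 0+g_M=\{g_M\}$ by M2, and $g_M\ne 0$ forces $\deg t=M\ge m$. If $\deg f=\deg g=m=M$, the hypothesis $f\ne -g$ ensures that there is some coefficient $k$ with $f_k\ne -g_k$, which combined with the coefficient-wise analysis rules out $t$ being the zero polynomial; a careful index bookkeeping then yields $\deg t\ge m$.

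For part (ii), I would again look at the leading coefficient of a candidate $t(X)\in f(X)g(X)$. Let $p=\deg f$, $q=\deg g$. By definition, $t_{p+q}\in \sum_{i+j=p+q} f_i g_j$. For every pair $(i,j)$ with $i+j=p+q$ different from $(p,q)$, we have either $i>p$ (so $f_i=0$ and $f_ig_j=0$) or $j>q$ (so $g_j=0$ and $f_ig_j=0$). Hence the whole sum collapses, by repeated application of M2, to the single element $f_pg_q$, so $t_{p+q}=f_pg_q$. Because $R$ is a superdomain, $f_pg_q\ne 0$, so $\deg t\ge p+q$. The same index count shows that for $n>p+q$ every term of the convolution vanishes, giving $t_n=0$ and so $\deg t=p+q$. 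The multi-factor statement follows by an immediate induction on the number of factors.

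Part (iii) is then essentially a corollary: each linear polynomial $X-a_i$ has degree $1$ (clearly nonzero, using part (a) or (c) of Lemma \ref{lemperm} to expand $X-a_i$ as a polynomial of degree one), so if $f\in(X-a_1)(X-a_2)\cdots(X-a_p)$, applying the iterated form of (ii) yields $\deg f=1+1+\cdots+1=p$. Combined with the hypothesis $\deg f=n$, this gives $p=n$.

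I expect the only real subtlety to lie in part (i), in the case $\deg f=\deg g$: one must carefully exploit the hypothesis $f\ne -g$ to prevent $t$ from collapsing below degree $m$, since a priori multivalued cancellation could allow many coefficients of $t$ to be chosen as $0$ simultaneously. Parts (ii) and (iii) are clean once one notices the collapse of the leading convolution to a single product.
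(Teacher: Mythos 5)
Your treatment of the upper bound in (i), of the whole of (ii), and of (iii) is sound and is essentially the only natural argument: the paper itself gives no proof of this lemma (it is imported from the cited source), and the coefficient-wise/convolution analysis you use is exactly what one would write. One small correction in (ii): in a superring the product of two elements of $R$ is a nonempty \emph{set}, so the convolution at index $p+q$ collapses to the set $f_pg_q$ rather than to ``the single element $f_pg_q$''; you only get $t_{p+q}\in f_pg_q$, but the superdomain hypothesis still gives $0\notin f_pg_q$ and hence $t_{p+q}\neq 0$, so nothing is lost.

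The genuine gap is precisely the spot you flagged in (i), the case $\deg f=\deg g=m$. Your sketch only shows that $f\neq -g$ forces some index $k$ with $f_k\neq -g_k$, hence $0\notin f_k+g_k$ and $t_k\neq 0$; but that index $k$ may be strictly smaller than $m$, so you obtain $t\neq 0$ and nothing more. No ``careful index bookkeeping'' can close this, because the asserted lower bound is false in the equal-degree case. Over the Krasner multifield $K=\{0,1\}$ take $f=X^2+X+1$ and $g=X^2+1$; then $-g=g\neq f$, yet $t=X+1$ lies in $f+g$ (since $1+1=\{0,1\}$ allows the choice $t_2=0$) and $\deg t=1<2=\min\{\deg f,\deg g\}$. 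The same failure already occurs for ordinary rings, e.g.\ $f=X^2+X$, $g=-X^2$ over $\mathbb Z$ with $f+g=X$. So the only provable statements in the equal-degree case are $\deg t\le\max\{\deg f,\deg g\}$ together with $t\neq 0$ when $f\neq -g$; the lower bound $\min\{\deg f,\deg g\}\le\deg t$ holds (and is then an equality with the maximum) only when $\deg f\neq\deg g$, which is the subcase you did handle correctly. You should either restrict (i) accordingly or weaken its conclusion; as written, the step cannot be completed.
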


Let $f(X)=a_0+...+a_nX^n$ and $g(X)=b_0+...+b_mX^m$ with $a_n,b_m\ne0$. We establish the following notation: for $k\in\mathbb N$ with $k\le\deg(f)$ we define $(f)_k:=a_k$ (the $k$-th coefficient of $f$).

Despite the fact that $R[X]$ is not full in general, we have a powerful Lemma to get around this situation.

\begin{lem}\label{lemfator}
 Let $R$ be a superring and $f\in R[X]$ with $f(X)=a_nX^n+...+a_1X+a_0$. Then:
 \begin{enumerate}[i -]
     \item For all $b,c\in R$, $(b+cX)f(X)=bf(X)+cXf(X)$.
     \item For all $b,c\in R$ and all $p,q\in\omega$ with $p<q$, $$(bX^p+cX^q)f(X)=bX^pf(X)+cX^pf(X).$$
     \item For all $b,c,d\in R$ and all $p,q,r\in\omega$ with $p<q<r$, $$(bX^p+cX^q+dX^r)f(X)=bX^pf(X)+cX^pf(X)+dX^rf(X).$$
     \item For all $b_0,....,b_m\in R$,
     \begin{align*}
       (b_0+b_1X+b_2X^2+...+b_mX^m)f(X)=\\
     b_0f(X)+(b_1X+b_2X^2+...+b_mX^m)f(X).  
     \end{align*}
     \item For all $b_0,....,b_m\in R$,
     \begin{align*}
       (b_0+b_1X+b_2X^2+...+b_{m-1}X^{m-1}+b_mX^m)f(X)=\\
     (b_0+b_1X+b_2X^2+...+b_{m-1}X^{m-1})f(X)+b_mX^mf(X).  
     \end{align*}
     \item For all $b_0,....,b_m\in R$,
     \begin{align*}
       (b_0+b_1X+...+b_jX^j+b_{j+1}X^{j+1}+...+b_mX^m)f(X)=\\
     (b_0+b_1X+...+b_jX^j)f(X)+(b_{j+1}X^{j+1}+...+b_mX^m)f(X).  
     \end{align*}
     In particular, if $d\in R$, $g(X)\in R[X]$ and $r>\deg(g(X))$, then
     $$(g(X)+dX^r)f(X)=g(X)f(X)+dX^rf(X).$$
 \end{enumerate}
\end{lem}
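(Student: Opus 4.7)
The plan is to verify every identity by direct coefficient-wise computation from the definitions of multiplication and addition in $R[X]$. All six assertions share the same structure: each is of the form $(g_1 + g_2) f = g_1 f + g_2 f$ (or an analogous three-term version), where $g_1$ and $g_2$ are polynomials whose nonzero coefficients sit at \emph{disjoint} sets of degrees. This disjointness is what rescues us from the failure of fullness in $R[X]$: because $(g_1)_k$ and $(g_2)_k$ are never simultaneously nonzero, each coefficient-wise multi-sum $(g_1)_k + (g_2)_k$ collapses to a singleton, so the set-valued addition $g_1 + g_2 \subseteq R[X]$ itself reduces to a single polynomial. Consequently, each assertion reduces to verifying that a certain finite multi-sum in $R$ describes both sides of the equation at every degree $k$.

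For (i), writing $f = (a_n)_{n \in \omega}$ with the convention $a_{-1} := 0$, the product definition gives $h \in (b + cX) \cdot f$ iff $h_k \in b a_k + c a_{k-1}$ for all $k$. On the other side, $h \in bf + cXf$ iff there exist $p \in bf$ and $q \in cXf$ with $h \in p + q$, i.e.\ $p_k \in b a_k$, $q_k \in c a_{k-1}$, and $h_k \in p_k + q_k$ for all $k$; by the very definition of the finite multi-sum $b a_k + c a_{k-1}$ this is the same condition. Parts (ii) and (iii) are handled identically, producing the conditions $h_k \in b a_{k-p} + c a_{k-q}$ and $h_k \in b a_{k-p} + c a_{k-q} + d a_{k-r}$ respectively on both sides; the hypotheses $p < q$ and $p < q < r$ are used only to guarantee that $bX^p + cX^q$ (resp.\ $bX^p + cX^q + dX^r$) is a single polynomial and not a multi-valued object.

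For (iv), (v), and (vi) the same coefficient-wise comparison leads to the multi-sum $\sum_{i=0}^{m} b_i a_{k-i}$ on both sides, parenthesized differently (e.g., in (iv) the right side factors $b_0 a_k$ to the outside; in (vi) the split at position $j$ induces the bracketing $(b_0 a_k + \cdots + b_j a_{k-j}) + (b_{j+1} a_{k-j-1} + \cdots + b_m a_{k-m})$). These bracketings agree by the associativity and commutativity of finite multi-sums in $R$ recorded in Lemma \ref{lembasic1}(a). The \emph{In particular} clause of (vi) follows by writing $g(X) = \sum_{i=0}^{s} b_i X^i$ with $s = \deg g < r$, padding with zero coefficients in positions $s+1, \ldots, r-1$, and applying (vi) with the split at $j = s$.

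The main obstacle is pure bookkeeping: translating polynomial-level identities into multi-sum identities in $R$ coefficient-by-coefficient, taking care that both inclusions (not merely the weak distributive inclusion that always holds in a superring) are actually verified, and invoking associativity and commutativity of multi-sums at the right moments. No new structural idea beyond Lemma \ref{lembasic1}(a) and the definitions of $R[X]$ is required.
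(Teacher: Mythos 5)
Your proof is correct and is exactly the direct coefficient-wise verification the paper intends (the paper states Lemma \ref{lemfator} without writing out a proof): the disjoint-support observation makes each left-hand factor a singleton polynomial, and both sides then reduce at every degree $k$ to the same finite multi-sum $\sum_i b_i a_{k-i}$, with the zero terms dropping out via $x+0=\{x\}$ and $0\cdot a=\{0\}$. The only tiny imprecision is the citation: re-bracketing the multi-sum needs generalized associativity (which follows from M3 by induction) rather than the permutation-invariance literally stated in Lemma \ref{lembasic1}(a), but this is a matter of bookkeeping, not a gap.
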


\begin{teo}[Euclid's Division  Algorithm (3.4 in \cite{davvaz2016codes})]\label{euclid}
 Let $K$ be a superfield. Given polynomials $f(X),g(X)\in K[X]$ with $g(X)\ne0$, there exists $q(X),r(X)\in 
K[X]$ such that $f(X)\in q(X)g(X)+r(X)$, with $\deg r(X)<\deg g(X)$ or $r(X)=0$.
\end{teo}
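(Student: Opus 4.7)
The proof will go by strong induction on $n = \deg f$, strengthening the conclusion to also record the bound $\deg q \leq n - m$ (or $q = 0$), where $m = \deg g$. This refinement is essential because $K[X]$ is not full in general, so the distributive identity $(q_1 + cX^{n-m})\, g = q_1 g + cX^{n-m} g$ is available only through Lemma \ref{lemfator}(vi), which requires $\deg q_1 < n - m$. The base case $n < m$ (or $f = 0$) is immediate: take $q = 0$ and $r = f$, so that $qg + r = \{0\} + f = \{f\} \ni f$.

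For the inductive step with $n \geq m$, write $a_n, b_m$ for the leading coefficients of $f, g$. The first nontrivial task is to produce $c \in K$ with $a_n \in c \cdot b_m$. Since in a superfield $(K \setminus \{0\}, \cdot, 1)$ is only a multimonoid (not a multigroup, as flagged in the footnote after the definition of quasi-superfield), one cannot just set $c = a_n b_m^{-1}$. Instead, $b_m^{-1}$ exists with $1 \in b_m \cdot b_m^{-1}$, and $a_n \in 1 \cdot a_n$; applying the associativity axiom M3 of the multiplicative multimonoid yields exactly such a $c \in b_m^{-1} \cdot a_n$. By the definition of multiplication in $K[X]$, the degree-$n$ coefficient of any polynomial in $c X^{n-m} g$ lies in $c \cdot b_m \ni a_n$, so I can choose a representative $h \in c X^{n-m} g$ whose leading coefficient is exactly $a_n$. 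Since the degree-$n$ coefficient in $f + (-h)$ lies in $a_n + (-a_n) \ni 0$, I can then pick $f_1 \in f - h$ with $\deg f_1 < n$ (or $f_1 = 0$).

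Applying the strengthened induction hypothesis to $f_1$ produces $q_1, r_1$ with $f_1 \in q_1 g + r_1$, $\deg r_1 < m$ or $r_1 = 0$, and $\deg q_1 \leq \deg f_1 - m < n - m$ or $q_1 = 0$. Unpacking $f_1 \in s + r_1$ with $s \in q_1 g$, the relation $f \in f_1 + h$ (from M1 of the additive multigroup) combined with associativity and commutativity yields $f \in w + r_1$ for some $w \in s + h \subseteq q_1 g + c X^{n-m} g$. Now Lemma \ref{lemfator}(vi), applicable because $\deg q_1 < n - m = \deg(cX^{n-m})$, gives $q_1 g + c X^{n-m} g = (q_1 + c X^{n-m})\, g$; hence $w \in q^* g$ for some $q^* \in q_1 + c X^{n-m}$, and Lemma \ref{degreelemma}(i) forces $\deg q^* \leq \max\{\deg q_1, n - m\} = n - m$. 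Setting $q := q^*$ and $r := r_1$ closes the induction. The main obstacle is the weak distributivity in $K[X]$, which is navigated precisely by engineering the induction so that $\deg q_1$ stays strictly below $n - m$, unlocking Lemma \ref{lemfator}(vi) at the crucial combination step; a secondary subtlety is that multiplication in a superfield is only a multimonoid, so producing the multiplier $c$ requires M3 rather than plain division.
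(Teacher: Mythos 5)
Your proof is correct, but there is nothing in the paper to compare it against: the theorem is imported from the literature (Theorem 3.4 of \cite{davvaz2016codes}) and no proof is given here, so your argument is a genuine addition rather than a variant of the paper's. It is the classical long-division induction transplanted to the multivalued setting, and the two places where that transplant is genuinely delicate are both handled properly. First, since $(K\setminus\{0\},\cdot,1)$ is only a multimonoid, the multiplier $c$ with $a_n\in c\cdot b_m$ cannot be taken to be ``$a_nb_m^{-1}$'' naively; your derivation of it from $1\in b_m\cdot b_m^{-1}$, $a_n\in 1\cdot a_n$ and axiom M3 (plus commutativity M4) is exactly right. Second, since $K[X]$ need not be full, the recombination $q_1g+cX^{n-m}g=(q_1+cX^{n-m})g$ is licensed only by Lemma \ref{lemfator}(vi) under the hypothesis $n-m>\deg q_1$, and your strengthened induction hypothesis $\deg q\le \deg f-\deg g$ (or $q=0$) is precisely what guarantees this; without that strengthening the argument would break at the combination step. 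The remaining moves --- choosing $h\in cX^{n-m}g$ with $n$-th coefficient exactly $a_n$ (possible because the coefficients of a product in $K[X]$ range independently over their sets), extracting $f\in w+r_1$ with $w\in q_1g+cX^{n-m}g$ via M1/M3/M4 of the additive multigroup --- are all sound. One cosmetic remark: when $q_1=0$ (e.g.\ when $\deg f=\deg g$), Lemma \ref{lemfator}(vi) does not literally apply since $\deg 0$ is undefined, but the identity $0\cdot g+cX^{n-m}g=cX^{n-m}g=(0+cX^{n-m})g$ is immediate, so nothing is lost; it would be worth flagging that subcase explicitly.
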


\begin{rem}
$ $
 \begin{enumerate}[i -]
  \item Note that the polynomials $q$ and $r$ of Theorem \ref{euclid} are not unique in general: if $f\in gq+r$, then 
$f\in g(q+1-1)+r$ and $f\in gq+(r+1-1)$, then, if $\{0\}\ne1-1$, we have many $q$'s and $r$'s.

 \item However, if $R$ is a ring, then Theorem \ref{euclid} provide the usual Euclid Algorithm, with the uniqueness of the quotient and remainder.
\end{enumerate}
\end{rem}

\begin{teo}[Adapted from Theorem 6 of \cite{ameri2019superring}]\label{teoPID}
 Let $F$ be a full superfield. Then $F[X]$ is a principal ideal superdomain.
\end{teo}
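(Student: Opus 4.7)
The plan is to prove the two parts of the claim separately: that $F[X]$ is a superdomain, and that every ideal of $F[X]$ is principal. The first part is immediate from Lemma \ref{lemperm}(e), since a superfield is, by definition, a superdomain.

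For the principal ideal part, I would mimic the classical argument using Euclid's algorithm, taking care with the multivalued nature of the operations. Let $I \subseteq F[X]$ be an ideal. The case $I = \{0\}$ is trivial, so suppose $I \neq \{0\}$ and choose $g \in I \setminus \{0\}$ of minimum degree, which is possible by well-ordering of $\mathbb{N}$ applied to $\{\deg h : h \in I \setminus \{0\}\}$. I would then show $I = \langle g \rangle$. The inclusion $\langle g \rangle \subseteq I$ is immediate, since $g \in I$ and $I$ is closed under the superring operations that define $\langle g \rangle$.

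For the reverse inclusion, pick $f \in I$ and apply Euclid's Division Algorithm (Theorem \ref{euclid}) to obtain $q,r \in F[X]$ with $f \in q \cdot g + r$ and either $r = 0$ or $\deg r < \deg g$. Unpacking the multivalued sum, there exists $s \in q \cdot g$ with $f \in s + r$. Multigroup axiom M1 applied to the additive multigroup of $F[X]$ yields $r \in f + (-s)$. Because $g \in I$ and $I$ is an ideal, the whole multiproduct $q \cdot g$ is contained in $I$; in particular $s \in I$, and then $-s \in I$ as well (using the rule of signals together with $A \cdot I \subseteq I$). Hence $r \in f + (-s) \subseteq I + I \subseteq I$. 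The minimality of $\deg g$ inside $I \setminus \{0\}$ now forces $r = 0$, and the additive identity axiom ``$b \in a + 0$ iff $a = b$'' gives $f = s \in q \cdot g \subseteq \langle g \rangle$.

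The delicate step, and the only one that is not purely bookkeeping, is the extraction of $r \in I$ from the multivalued relation $f \in q \cdot g + r$: one must apply M1 inside the additive multigroup rather than trying to ``subtract'' and must read the ideal axioms $A\mathfrak{a} \subseteq \mathfrak{a}$ and $\mathfrak{a} + \mathfrak{a} \subseteq \mathfrak{a}$ set-theoretically, so that the entire multiproduct $q \cdot g$ (and not merely a chosen representative) lies inside $I$. Once this is handled correctly, nothing else in the argument genuinely requires fullness of $F$; that hypothesis enters only implicitly, via the setup under which Euclid's algorithm and the basic properties of $F[X]$ recorded in Lemmas \ref{lemperm} and \ref{degreelemma} were established.
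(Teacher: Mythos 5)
The paper does not actually print a proof of Theorem \ref{teoPID}; it defers to Theorem 6 of the cited reference, so there is no internal argument to compare against. Your proof is the standard ``Euclidean implies principal'' argument correctly transplanted to the multivalued setting, and the delicate steps are handled properly: you extract $s\in q\cdot g$ from the multivalued sum, use M1 (rather than an illegitimate subtraction) to get $r\in f+(-s)$, read $A\mathfrak{a}\subseteq\mathfrak{a}$ and $\mathfrak{a}+\mathfrak{a}\subseteq\mathfrak{a}$ set-theoretically so that all of $q\cdot g$ and hence $s$, $-s$, $r$ land in $I$, and invoke minimality of $\deg g$ plus M2 to conclude $f=s\in\langle g\rangle$. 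Your closing observation is also accurate and worth keeping: since Theorem \ref{euclid} is stated for arbitrary superfields, nothing in this argument visibly uses fullness of $F$, so the hypothesis appears to be inherited from the cited source (or from the proof of the division algorithm) rather than from the PID argument itself.
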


\subsection{On Evaluation and Roots}

% \hlm{Reennuciar boa parte dos resultados com "fatos" e dizer aqui que as provas estão no artigo submetido do fecho algebrico (versao prelimiar no arxiv) }

Let $R, S$ be  superrings and $h : R \to S$ be a morphism. Then $h$ extends naturally to a morphism in the superrings multipolynomials $h^X : 
R[X] \to S[X]$:
$$(a_n)_{n \in \mathbb{N}} \in R[X] \ \mapsto \ (h(a_n))_{n \in \mathbb{N}} \in S[X]$$

Now let $s \in S$. We define the $h$-\textbf{evaluation} of $s$ at $f(X)\in R[X]$ with $f(X)=a_0+a_1X+...+a_nX^n$ by
$$f^h(s)=ev^h(s,f):=\{s'\in S : s'\in h(a_0)+ h(a_1).s+h(a_2).s^2+...+h(a_n).s^n\}.$$
We define the $h$-\textbf{evaluation} for a subset $I\subseteq S$ by
$$f^h(I)=\bigcup_{s\in I}f^h(s).$$

In particular if $S\supseteq R$ are superrings and $\alpha\in S$, we have the \textbf{evaluation} of $\alpha$ at  $f(X)\in R[X]$ by
$$f(\alpha,S)=ev(\alpha,f,S)=\{b\in S: b \in a_0+a_1\alpha+a_2\alpha^2+...+a_n\alpha^n\}\subseteq S.$$
Note that the evaluation \textbf{depends} on the choice of $S$. When $S=R$ we just denote $f(\alpha,R)$ by $f(\alpha)$. 

A \textbf{root} of $f$ in $S$ is an element $\alpha\in S$ such that $0\in ev(\alpha,f,S)$. In this case we say that $\alpha$ is \textbf{$S$-algebraic} over $R$. An \textbf{effective root} of $f$ in $S$ is an element $\alpha\in S$ such that $f\in(X-\alpha)\cdot g(X)$ for some $g(X)\in R[X]$. A superring $R$ is \textbf{algebraically closed} if every non constant polynomial in $R[X]$ has a root in $R$. 

Observe that, if $F$ is a field, the evaluation of $F[X]$ as a ring coincide with the usual evaluation, and, of course, root and effective roots are the same thing. Therefore, if $F$ is algebraically closed as hyperfield and superfield, then will be algebraically closed in the usual sense. 

\begin{rem}  \label{unexpected-rem}
The  expansion of the above field-theoretical concepts to the multialgebraic theory  of superfields (hyperfields, in particular)  brings new phenomena:

\begin{enumerate}[i-]

\item (Polynomials can have infinite roots):
Let $F$ be a infinite pre-special hyperfield  (\cite{ribeiro2016functorial}). Then $F$ has characteristic $0$, $a^2=1$ for all $a\ne0$ so the polynomial $f(X)=X^2-1$ has infinite roots (i.e, $0\in ev(f,\alpha)$ for all $\alpha\in\dot F$).

\item (Finite hyperfields can be algebraically closed). The hyperfield $K=\{0,1\}$ is algebraically closed. In fact, if $p(X)=a_0+a_1X+a_2X^2+...+a_nX^n\in K[X]$, with $a_n\ne0$, then $0\in p(0)$ (if $a_0=0$) or $p(1)=K$, since $1+1=\{0,1\}$.

\end{enumerate}
\end{rem}

We have good results concerning irreducibly (see for instance, Theorem \ref{lemquadext} below). These results are the key to the development of superfields extensions, which leads us to some kind of algebraic closure.

\begin{defn}[Irreducibility]
Let $R$ be a superfield and $f,d\in R[X]$. We say that $d$ divides $f$ if and only if $f\in\langle d\rangle$, and denote $d|f$. 
We say that $f$ is \textbf{irreducible} if $\deg f\ge1$ and $u|f$ for some $u\in R[X]$ (i.e, $f\in\langle u\rangle$), then 
$\langle f\rangle=\langle u\rangle$. 
\end{defn}

\begin{teo}\label{lemquadext2}
Let $F$ be a full superfield and $p(X)\in F[X]$ be an irreducible polynomial. Then $\langle p(X)\rangle$ is a maximal ideal.
\end{teo}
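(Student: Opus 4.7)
The natural strategy is to reduce the maximality of $\langle p(X)\rangle$ to the principal ideal structure of $F[X]$ and then apply the definition of irreducibility.

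First, since $F$ is a full superfield, Theorem \ref{teoPID} tells us that $F[X]$ is a principal ideal superdomain. So any ideal $J$ sandwiched between $\langle p(X)\rangle$ and $F[X]$, that is
$$\langle p(X)\rangle \;\subseteq\; J \;\subseteq\; F[X],$$
has the form $J=\langle d(X)\rangle$ for some $d(X)\in F[X]$. From $p(X)\in\langle d(X)\rangle$ we get $d(X)\mid p(X)$ in the sense of the divisibility of the excerpt.

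Next, I would invoke the irreducibility hypothesis on $p(X)$ applied to this divisor $d(X)$. There are two cases to dispatch, according to whether $d(X)$ is a unit of $F[X]$ or not. If $d(X)$ is a non-unit divisor of $p(X)$, irreducibility forces $\langle d(X)\rangle = \langle p(X)\rangle$, so $J=\langle p(X)\rangle$. If instead $d(X)$ is a unit, then $1\in\langle d(X)\rangle = J$, so Lemma \ref{lem1}(i) (or (iv), using fullness) gives $J=F[X]$. In either case $J\in\{\langle p(X)\rangle,F[X]\}$, proving maximality.

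The step that requires the most care is the identification of what ``non-unit divisor'' means in $F[X]$, because the excerpt states that $F[X]$ need not be full even when $F$ is a full superfield. Here I would use the degree control provided by Lemma \ref{degreelemma}(ii): since $F$ is a superdomain, any factorization $1\in d(X)\cdot e(X)$ in $F[X]$ forces $\deg(d)=\deg(e)=0$, so the units of $F[X]$ are exactly the images $\underline{a}$ of the nonzero $a\in F$ under the full embedding of Lemma \ref{lemperm}(g); in particular such a $d$ has $\underline{a^{-1}}\cdot d=\underline{1}$, and the ideal $\langle d\rangle$ contains $1$, giving $J=F[X]$ via Lemma \ref{lem1}(i). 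The remaining verification, that the ``non-unit'' clause of irreducibility covers exactly the case $\deg(d)\geq 1$, is then straightforward, and the proof is complete.
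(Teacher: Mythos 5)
Your overall strategy --- reduce to Theorem \ref{teoPID} so that any intermediate ideal is principal, then apply irreducibility to the resulting divisor and use Lemma \ref{degreelemma}(ii) to identify the units of $F[X]$ with the nonzero constants of $F$ --- is the natural one, and it is clearly what the paper is setting up by placing Theorem \ref{teoPID} just before this statement (the paper supplies no written proof of the theorem, so there is nothing to diverge from). The sandwich argument is correct as far as it goes: if $\langle p\rangle\subseteq J=\langle d\rangle\subseteq F[X]$ then $d\mid p$; the case $d=0$ is impossible since $p\neq 0$; if $\deg d=0$ then $d$ is a unit and $J=F[X]$ by Lemma \ref{lem1}; and if $\deg d\geq 1$ then irreducibility (read sensibly) gives $J=\langle d\rangle=\langle p\rangle$.

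There is, however, one genuine gap: the paper's definition of a maximal ideal requires the ideal to be \emph{proper}, and you never show that $\langle p(X)\rangle\neq F[X]$, i.e.\ that $1\notin\langle p\rangle$. In the multivalued setting this does \emph{not} follow from $\deg p\geq 1$, because $\langle p\rangle=\sum F[X]p$ is built from \emph{sums} of multiples of $p$, and an element can generate the unit ideal without being a unit. Concretely, over the Krasner hyperfield $K$ (a full superfield) one has $1\in(X+1)+(X+1)\subseteq\langle X+1\rangle$, hence $\langle X+1\rangle=K[X]$, even though $X+1$ is not a unit; moreover every $u$ with $X+1\in\langle u\rangle$ then also satisfies $1\in\langle u\rangle$, so $\langle u\rangle=K[X]=\langle X+1\rangle$ and $X+1$ meets the paper's stated irreducibility condition. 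So your dichotomy ``unit versus degree $\geq 1$'' addresses only one direction of the unit/ideal correspondence: units have degree $0$, but degree $\geq 1$ does not guarantee the generated ideal is proper. To close the argument you must either extract properness from a sharper reading of ``irreducible'' (demanding that $\langle p\rangle$ be maximal among \emph{proper} principal ideals, which builds $1\notin\langle p\rangle$ into the hypothesis), or invoke it from Theorem \ref{lemquadext} (which asserts $\langle p\rangle$ is strongly prime, hence proper), or prove it directly; as written, the final sentence declaring the proof complete is premature.
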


If $F$ is not full, we cannot prove that $\langle p(X)\rangle$ is a maximal ideal. But we still have that $F[X]/\langle p\rangle$ is a superfield.
\begin{teo}\label{lemquadext}
 Let $F$ be a superfield and $p\in F[X]$ be an irreducible polynomial. Then $F[X]/\langle p\rangle$ is a superfield. In particular, $\langle p\rangle$ is a strongly prime.
\end{teo}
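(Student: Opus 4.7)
The plan is to verify the two defining properties of a superfield---quasi-superfield (every nonzero element is multiplicatively invertible) and superdomain (no zero divisors)---for the quotient $F[X]/\langle p\rangle$. By Lemma \ref{lem1}.iii the superdomain property is equivalent to strong primality of $\langle p\rangle$, so proving both properties automatically yields the ``in particular'' clause.

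For the quasi-superfield part, take $\bar{f}\neq 0$ in $F[X]/\langle p\rangle$, so that $f\notin\langle p\rangle$. Apply Euclid's division (Theorem \ref{euclid}) to replace $f$ by a remainder and assume $\deg f<\deg p$, then iterate the division on $p$ and $f$ to produce a chain $r_0:=p$, $r_1:=f$, $r_2,\ldots,r_{N+1}=0$ with $r_{i-1}\in q_i r_i+r_{i+1}$ and strictly decreasing degrees. The terminal $r_N$ divides $r_{N-1}$, and inductively every $r_i$, so in particular $r_N\mid p$ and $r_N\mid f$. Irreducibility of $p$ then forces $r_N$ to be a nonzero constant in $F$: otherwise $\langle r_N\rangle=\langle p\rangle$ combined with $r_N\mid f$ would yield the contradiction $f\in\langle p\rangle$. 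Back-substituting through the relations $r_{i+1}\in r_{i-1}-q_i r_i$ (obtained from additive multigroup axiom M1) and expanding products by weak distributivity expresses $r_N$ as an element of a hypersum $Ap+Bf$ for some $A,B\in F[X]$. Scaling by $r_N^{-1}\in F$ then gives $1\in A'p+B'f$ with $A'=r_N^{-1}A$, $B'=r_N^{-1}B$, and projecting to the quotient produces $\bar{1}\in\bar{B'}\cdot\bar{f}$, so $\bar{B'}$ inverts $\bar{f}$.

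For strong primality of $\langle p\rangle$, suppose $h\in fg\cap\langle p\rangle$ with $f\notin\langle p\rangle$; the aim is $g\in\langle p\rangle$. The Bezout-type identity from the previous part provides witnesses $\alpha\in Af$ and $\beta\in Bp$ with $1\in\alpha+\beta$. Weak distributivity then yields $g\in 1\cdot g\subseteq\alpha g+\beta g$, so $g\in\xi+\eta$ for some $\xi\in\alpha g\subseteq A(fg)$ and some $\eta\in\beta g\subseteq\langle p\rangle$. The task thus reduces to showing $\xi\in\langle p\rangle$.

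The main obstacle is precisely this last step. The membership $\xi\in A(fg)=\bigcup_{\phi\in fg}A\phi$ only produces \emph{some} $\phi\in fg$ with $\xi\in A\phi$, and the multivalued framework does not, a priori, permit us to arrange $\phi=h$---even though $Ah\subseteq\langle p\rangle$ is the particular subset we need to land in. Overcoming this will require either a choice of Bezout witnesses tailored to the given $h$ (running the Euclidean back-substitution ``relative to $h$'' rather than to $1$), or a stronger exchange/associativity consequence of the multimonoid axioms M3--M4 forcing the decomposition of $g$ to route through $h$ rather than through some other element of $fg$. I expect this compatibility issue between multivalued products and Bezout-type arguments, rather than the formal algebra of the Euclidean algorithm itself, to be the delicate core of the proof.
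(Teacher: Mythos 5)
The paper states this theorem without an explicit proof in the present text, so there is no in-paper argument to match yours against; judged on its own terms, your proposal is incomplete, and you have correctly located one of the two genuine obstructions yourself. The strong-primality half is indeed unfinished for exactly the reason you give: from $g\in 1\cdot g\subseteq \alpha g+\beta g$ with $\alpha\in Af$ you only learn that $\xi$ lies in $A\phi$ for \emph{some} $\phi\in fg$, and nothing forces $\phi$ to be the particular element $h\in fg\cap\langle p\rangle$ you started with. Since a quasi-superfield need not be a superdomain (the paper separates these notions precisely because invertibility of all nonzero classes does not exclude $0\in\bar f\bar g$ with $\bar f,\bar g\neq 0$), this half cannot be recovered from the invertibility half, and your proof as written establishes neither the ``in particular'' clause nor, therefore, that the quotient is a superfield rather than merely a quasi-superfield.

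There is a second gap, which you do not flag, in the invertibility half. Back-substitution through $r_{i+1}\in r_{i-1}-q_ir_i$ uses only the \emph{weak} distributive law $d\in c(a+b)\Rightarrow d\in ca+cb$, so what it yields is $r_N\in u+v$ with $u\in\langle p\rangle$ and $v\in B_1f+\cdots+B_kf$ for \emph{several} polynomials $B_j$; recombining this into a single product $Bf$ would require the reverse inclusion $B_1f+B_2f\subseteq(B_1+B_2)f$, which fails in a non-full superring. What you actually obtain is $\bar 1\in\langle\bar f\rangle$ (recall $\langle a\rangle=\bigcup_n(Aa+\cdots+Aa)$, which collapses to $Aa$ only in the full case, cf.\ Lemma \ref{lem1}(iv)), and that is strictly weaker than the existence of $\bar b$ with $\bar 1\in\bar b\bar f$. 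This is exactly the distinction the authors draw just before the statement: for full $F$ one gets maximality of $\langle p\rangle$ (Theorem \ref{lemquadext2}), while the general case needs the special partial-distributivity facts available in $F[X]$ and $F(p)$ (Lemmas \ref{lemfator} and \ref{lemfator2}), which allow a polynomial multiplier to be split and reassembled monomial by monomial. Your argument never invokes these, and without them the Euclidean/Bezout scheme does not close either half.
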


% Using Theorem \ref{lemquadext}, we obtain an algorithm to determine the invertible elements in $F[p(X)]$ particularly useful in the field case:

% \begin{cor}\label{lemquadext3}
% Let $F$ be a field and $p(X)\in F[X]$ be an irreducible polynomial. If $f(X)\ne0$ and $p(X)=f(X)q(X)+r(X)$ with $r(X)\ne0$, then
% $$[f(X)]^{-1}=-[q(X)][r(X)]^{-1}\in F[p(X)].$$
% \end{cor}

\begin{defn}
Let $F$ be a superfield and $p(X)\in F[X]$ be an irreducible polynomial. We denote $F(p):=F(p(X))=F[X]/\langle p(X)\rangle$.
\end{defn}

\begin{lem}\label{lemfator2}
 Let $F$ be a superfield and $p(X)\in F[X]$ be an irreducible polynomial. Denote $\overline X=\lambda$ and let $f\in F(p)$ with $f=\overline{a_n}\lambda^n+...+\overline{a_1}\lambda+\overline a_0$. Then:
 \begin{enumerate}[i -]
     \item For all $b,c\in F$, $(\overline b+\overline c\lambda)f=\overline bf+\overline c\lambda f$.
    
     \item For all $b_0,....,b_m\in F$,
     \begin{align*}
       (\overline{b_0}+\overline{b_1}\lambda+...+\overline{b_j}\lambda^j+\overline{b_{j+1}}\lambda^{j+1}+...+\overline{b_m}\lambda^m)f=\\
     (\overline{b_0}+\overline{b_1}\lambda+...+\overline{b_j}\lambda^j)f+(\overline{b_{j+1}}\lambda^{j+1}+...+\overline{b_m}\lambda^m)f.  
     \end{align*}
     In particular, if $d\in F$, $g\in F(p)$ with $g=\overline{b_0}+\overline{b_1}\lambda+\overline{b_2}\lambda^2+...+\overline{b_m}\lambda^m$ and $r>m$, then
     $$(g+\overline d\lambda^r)f=gf+\overline d\lambda^rf.$$
 \end{enumerate}
\end{lem}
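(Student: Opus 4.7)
The strategy is to lift each equality in $F(p)=F[X]/\langle p\rangle$ to the superring $F[X]$, apply the corresponding case of Lemma~\ref{lemfator}, and push the identity down through the quotient map $\pi:F[X]\to F(p)$. A direct computation from the definition of the operations on $F[X]/\langle p\rangle$ shows that $\pi$ is a full morphism of superrings, i.e.\ $\pi(x+y)=\pi(x)+\pi(y)$ and $\pi(x\cdot y)=\pi(x)\cdot\pi(y)$, and this extends in the obvious way to subsets $A,B\subseteq F[X]$ via $\pi(A+B)=\pi(A)+\pi(B)$ and $\pi(A\cdot B)=\pi(A)\cdot\pi(B)$.

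The second preliminary is that the polynomial-in-$\lambda$ expressions appearing in the statement reduce to singletons in $F(p)$, and so admit canonical lifts. By Lemma~\ref{lemperm}(a), $\lambda^n=\{\overline{X^n}\}$; by Lemma~\ref{lemperm}(b), $\overline c\cdot\lambda^n=\{\overline{cX^n}\}$ for every $c\in F$. Since the superring sum in $F[X]$ is computed coefficientwise, the sum of monomials of pairwise distinct degrees is deterministic, so $\overline{b_0}+\overline{b_1}\lambda+\cdots+\overline{b_m}\lambda^m=\{\overline{b_0+b_1X+\cdots+b_mX^m}\}$. In particular, the hypothesis $f=\overline{a_n}\lambda^n+\cdots+\overline{a_0}$ determines the canonical lift $\tilde f(X):=a_0+a_1X+\cdots+a_nX^n$ with $\pi(\tilde f)=f$, and likewise each polynomial-in-$\lambda$ expression appearing on the left-hand side of (i) or (ii) has a unique preimage in $F[X]$.

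For (i), set $\tilde h(X):=b+cX\in F[X]$. Lemma~\ref{lemfator}(i) gives the set equality $\tilde h\tilde f=b\tilde f+cX\tilde f$ in $F[X]$; applying $\pi$ and using the singleton identifications yields $(\overline b+\overline c\lambda)f=\overline b\,f+\overline c\lambda\, f$. Part (ii), together with its ``in particular'' consequence for $g+\overline d\lambda^r$, follows by the same procedure: split $\tilde g(X)=b_0+\cdots+b_mX^m$ as $\tilde g_1+\tilde g_2$ with $\tilde g_1=\sum_{i\le j}b_iX^i$ and $\tilde g_2=\sum_{i>j}b_iX^i$, apply Lemma~\ref{lemfator}(vi) to $\tilde g\tilde f$ in $F[X]$, and project the resulting set equality down through $\pi$. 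The only subtle point is the singleton observation of the second paragraph; without it the chosen lifts would not be canonical and the transport across $\pi$ could a priori lose information. Everything else is a mechanical application of the full morphism $\pi$ to identities already established in $F[X]$.
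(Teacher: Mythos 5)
Your proof is correct and is exactly the argument the paper intends: Lemma \ref{lemfator2} is stated without proof as the image of Lemma \ref{lemfator} under the quotient map, and your write-up supplies precisely the two details that make this transport work (the quotient projection $\pi:F[X]\to F(p)$ is full by the definition of the quotient operations, and sums of monomials of distinct degrees are singletons, so the lifts are canonical). Nothing is missing.
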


% \begin{teo}\label{root1}
%  Let $F$ be a superfield and $p(X)\in F[X]$ be a polynomial  of degree greater or equal to 1. Then there exist superfield $L$ such that $F\subseteq L$, $F$ is a sub superfield of $L$ (i.e, the inclusion $F\hookrightarrow L$ is a full morphism) and $p(X)$ has a root.
% \end{teo}

% \begin{cor}
% Let $F$ be a superfield and $f\in F[X]$ be a polynomial with $n=\deg(f)\ge1$. Then there exist a superfield $L$ such that $F\subseteq L$ and $f$ has at least $n$ roots.
% \end{cor}

% \begin{cor}
% Let $F$ be a superfield and $f_1,...,f_n\in F[X]$ be polynomials with $1\le \deg(f_j)=r_j$, $j=1,...,n$. Then there exist a superfield $L$ such that $F\subseteq L$ andt each $f_j$ has at least $r_j$ roots.
% \end{cor}

\section{Matrices and determinants over commutative superrings} 

\begin{defn}
Let $m,n$ be positive integers. A \textbf{$m\times n$ matrix} over a commutative superring $R$ is  a double sequence $A$ of elements of $F$, distributed in $m$ rows and $n$ columns. The set of $m\times n$ matrices is denoted by $M_{m\times n}(R)$. When $m=n$, we denote $n\times n$ matrices by $M_n(R)$.
\end{defn}

A matrix $A\in M_{m\times n}(R)$ is represented simply by $A=(a_{ij})$ (with $m$ and $n$ subscript if necessary) or by a table as below:
 $$A=\begin{pmatrix}a_{11} & a_{12} & \ldots & a_{1n} \\
 a_{21} & a_{22} & \ldots & a_{2n} \\
 \vdots & \vdots & \ddots & \vdots \\
 a_{m1} & a_{m2} & \ldots & a_{mn}
 \end{pmatrix}$$
 
 For $A,B\in M_{n\times m}(R)$ and $\lambda\in R$ with $A=(a_{ij})$ and $B=(b_{ij})$ we define $-A=(-a_{ij})$ and (multi) operations 
 $$A+B:=\{(d_{ij}):d_{ij}\in a_{ij}+b_{ij}\mbox{ for all }i,j\} \neq \emptyset$$
 and 
 $$\lambda A=\{(d_{ij}):d_{ij}\in \lambda a_{ij}\mbox{ for all }i,j\}.$$ 
 If $A\in M_{n\times m}(R)$ with $A=(a_{ij})$ and $B\in M_{m\times p}(R)$ with $B=(a_{ij})$, we define $A\times B=AB\subseteq M_{n\times p}(R)$ by
 $$AB=\{(d_{ij}):d_{ij}\in \sum^n_{k=1}a_{ik}b_{kj}=a_{i1}b_{1j}+a_{i2}b_{2j}+...+a_{in}b_{nk}\mbox{ for all }i,j\}  \neq \emptyset.$$
 
  We denote $0=(0_{ij})\in M_{m\times n}(R)$ and $1=(\delta_{ij})_{ij}\in M_{n}(R)$ the usual zero and identity matrices respectively.
 
We say that $A\in M_n(R)$ is \textbf{invertible} iff there exist $B\in M_n(R)$ with $1 \in AB$ and $1\in BA$.
 
 Of course, we adopt freely the usual simplified notation from commutative algebra. For example for $A=(a_{ij})$ and $B=(b_{ij})$ we simply write
 $$\begin{pmatrix}a_{11} & a_{12} & \ldots & a_{1n} \\
 a_{21} & a_{22} & \ldots & a_{2n} \\
 \vdots & \vdots & \ddots & \vdots \\
 a_{m1} & a_{m2} & \ldots & a_{mn}
 \end{pmatrix}+\begin{pmatrix}b_{11} & b_{12} & \ldots & b_{1n} \\
 b_{21} & b_{22} & \ldots & b_{2n} \\
 \vdots & \vdots & \ddots & \vdots \\
 b_{m1} & b_{m2} & \ldots & b_{mn}
 \end{pmatrix}=\begin{pmatrix}a_{11}+b_{11} & a_{12}+b_{12} & \ldots & a_{1n}+b_{1n} \\
 a_{21}+b_{21} & a_{22}+b_{22} & \ldots & a_{2n}+b_{2n} \\
 \vdots & \vdots & \ddots & \vdots \\
 a_{m1}+b_{m1} & a_{m2}+b_{m2} & \ldots & a_{mn}+b_{mn}
 \end{pmatrix}$$
 with the analogous simplifications for $\lambda A$ and $AB$.

\begin{ex}
    Consider $X_2=\{-2,-1,0,1,2\}$ as in Example \ref{kaleid} and matrices $A,B,C\in M_{2}(X_2)$ given by $$A=\begin{pmatrix}1&1\\0&1\end{pmatrix},\,B=\begin{pmatrix}-1&1\\0&-1\end{pmatrix}\mbox{ and }C=\begin{pmatrix}2&0\\-1&2\end{pmatrix}.$$
    With our notations we have
    \begin{align*}
        &A+B=\begin{pmatrix}1-1&1+1\\0+0&1-1\end{pmatrix}=\\        &\left\lbrace\begin{pmatrix}-1&1\\0&-1\end{pmatrix},\begin{pmatrix}-1&1\\0&0\end{pmatrix},\begin{pmatrix}-1&1\\0&1\end{pmatrix},        \begin{pmatrix}0&1\\0&-1\end{pmatrix},\begin{pmatrix}0&1\\0&0\end{pmatrix},\begin{pmatrix}0&1\\0&1\end{pmatrix},\begin{pmatrix}1&1\\0&-1\end{pmatrix},\begin{pmatrix}1&1\\0&0\end{pmatrix},\begin{pmatrix}1&1\\0&1\end{pmatrix}\right\rbrace\\
        \qquad\\
        &A\cdot B=\begin{pmatrix}1\cdot(-1)+1\cdot0&1\cdot1+1\cdot(-1)\\0\cdot(-1)+1\cdot(0)&0\cdot1+1\cdot(-1)\end{pmatrix}=\begin{pmatrix}-1&1-1\\0&-1\end{pmatrix}=\\
        &\left\lbrace\begin{pmatrix}-1&-1\\0&-1\end{pmatrix},\begin{pmatrix}-1&0\\0&-1\end{pmatrix},\begin{pmatrix}-1&1\\0&-1\end{pmatrix}\right\rbrace\\
        \qquad\\
        &A\cdot C=\begin{pmatrix}1\cdot2+1\cdot(-1)&1\cdot0+1\cdot2\\0\cdot2+1\cdot(-1)&0\cdot0+1\cdot2\end{pmatrix}=\begin{pmatrix}2-1&2\\-1&2\end{pmatrix}=\left\lbrace\begin{pmatrix}2&2\\-1&2\end{pmatrix}\right\rbrace.
    \end{align*}
\end{ex}
 
 Here we will ''export'' the usual terminology of diagonal, triangular, block and elementary matrices available for fields to superfields.
 
 With these, using the fact that $(R,+,-,0)$ is a commutative multigroup we immediately have the following Theorem.
 \begin{teo}\label{matrix1}
 Let $R$ be a superring. Then $(M_{m\times n}(R),+,-,0)$ is a commutative multigroup.
 \end{teo}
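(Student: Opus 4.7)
The plan is to verify the four axioms M1--M4 of a commutative multigroup from Definition \ref{defn:multimonoid} for $(M_{m\times n}(R),+,-,0)$, in each case reducing the verification to the corresponding axiom holding in $R$ applied entrywise. The essential observation is that by construction, $C\in A+B$ holds in $M_{m\times n}(R)$ if and only if $c_{ij}\in a_{ij}+b_{ij}$ holds in $R$ for every pair of indices $(i,j)$, and similarly $-A=(-a_{ij})$ is defined entrywise. Thus membership, inverses, and the zero element $0=(0_{ij})$ all behave componentwise, which allows a direct lift of the multigroup structure from $R$ to $M_{m\times n}(R)$.

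First I would dispatch the easy axioms. For M2, $B\in A+0$ unfolds to $b_{ij}\in a_{ij}+0$ for every $(i,j)$, which by M2 in $R$ is equivalent to $a_{ij}=b_{ij}$ for every $(i,j)$, i.e.\ $A=B$. For M4, $C\in A+B$ unfolds to $c_{ij}\in a_{ij}+b_{ij}$ for every $(i,j)$, and commutativity of $+$ in $R$ gives $c_{ij}\in b_{ij}+a_{ij}$ for every $(i,j)$, i.e.\ $C\in B+A$. For M1, assume $C\in A+B$, so $c_{ij}\in a_{ij}+b_{ij}$ for every $(i,j)$; applying M1 in $R$ entrywise yields $a_{ij}\in c_{ij}+(-b_{ij})$ and $b_{ij}\in (-a_{ij})+c_{ij}$ for every $(i,j)$, which translate to $A\in C+(-B)$ and $B\in (-A)+C$ in $M_{m\times n}(R)$.

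The step requiring a little more care is M3 (associativity). Assume there exists $X\in M_{m\times n}(R)$ with $X\in A+B$ and $T\in X+C$, so that $x_{ij}\in a_{ij}+b_{ij}$ and $t_{ij}\in x_{ij}+c_{ij}$ for every $(i,j)$. Applying M3 in $R$ to each fixed pair $(i,j)$ produces an element $y_{ij}\in R$ with $y_{ij}\in b_{ij}+c_{ij}$ and $t_{ij}\in a_{ij}+y_{ij}$. Assembling these entries into the matrix $Y:=(y_{ij})\in M_{m\times n}(R)$ gives, by the componentwise description of $+$, both $Y\in B+C$ and $T\in A+Y$, as required. The only subtlety here is noting that a family of witnesses indexed by $(i,j)$ can be freely packaged into a matrix; this uses nothing beyond the definition of $M_{m\times n}(R)$ as a set of double sequences.

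I do not expect any genuine obstacle in this proof: the whole statement is a formal consequence of the fact that matrix addition and negation are defined pointwise, combined with the multigroup axioms of $R$. The most delicate point worth flagging explicitly is the assembly of componentwise witnesses in M3, which, unlike the ring case, must be phrased in terms of membership in multivalued sums rather than equalities. Once this is observed, the verification of each axiom is a one-line reduction to the corresponding axiom in $R$.
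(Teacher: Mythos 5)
Your proof is correct and follows exactly the route the paper intends: the paper gives no written proof, stating only that the result is immediate from $(R,+,-,0)$ being a commutative multigroup, which is precisely your componentwise lift of M1--M4 (including the only nontrivial point, assembling the entrywise witnesses for M3 into a single matrix).
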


For general a superring $R$, the matrix product in $M_n(R)$ is not associative in general (and of course, $M_n(R)$ is not a superring in general).

\begin{ex}
Let $R=X_2$ as in Example \ref{kaleid}. Of course, $R$ is not full because, for example, 
$$2(1-1)=\{-2,0,2\}\mbox{ and }2-2=R.$$
Now, consider the matrices
$$A=\begin{pmatrix}1&1\\0&1\end{pmatrix},\,B=\begin{pmatrix}-1&1\\0&-1\end{pmatrix}\mbox{ and }C=\begin{pmatrix}2&0\\-1&2\end{pmatrix}.$$
In fact we have
\begin{align*}
    (AB)C&=\left[\begin{pmatrix}1&1\\0&1\end{pmatrix}\begin{pmatrix}-1&1\\0&-1\end{pmatrix}\right]\begin{pmatrix}2&0\\-1&2\end{pmatrix}\\
    &=\begin{pmatrix}-1&1-1\\0&-1\end{pmatrix}\begin{pmatrix}2&0\\-1&2\end{pmatrix}=\begin{pmatrix}-2 - (1-1)&2(1-1)\\1&-2\end{pmatrix}
\end{align*}
and
\begin{align*}
    A(BC)&=\begin{pmatrix}1&1\\0&1\end{pmatrix}\left[\begin{pmatrix}-1&1\\0&-1\end{pmatrix}\begin{pmatrix}2&0\\-1&2\end{pmatrix}\right]\\
    &=\begin{pmatrix}1&1\\0&1\end{pmatrix}\begin{pmatrix}-2-1&2\\1&-2\end{pmatrix}=\begin{pmatrix}(-2-1)+1&2-2\\1&-2\end{pmatrix}.
\end{align*}
Then we have $(AB)C\ne A(BC)$.
\end{ex}

Despite the fact that $M_n(R)$ is not a superring in general, it is a structure of interest (as we will see, for example, in Theorem \ref{teohell2}) and in the following Lemma we collect the properties holding for the product in the general case. We assume that if $A$ and $B$ are matrices over $R$ and it is written $AB$, then the number of columns in $A$ and the number of rows in $B$ are the same (it is used similar convention for $A + B$).

\begin{lem}\label{matrix2}
 Let $R$ be a superring and $A,B,C$ matrices. Then:
 \begin{enumerate}[a -]
     \item $A\cdot0=0\cdot A=\{0\}$.
     \item If $m = n$, then $A\cdot1=1\cdot A=\{A\}$.
     \item If $A(B+C)\subseteq AB+AC$, with equality if $R$ is full.
     \item $(B+C)A\subseteq BA+CA$, with equality if $R$ is full.
     \item If $R$ is full, then $(AB)C=A(BC)$.
 \end{enumerate}
\end{lem}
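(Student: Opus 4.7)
My plan is to verify each item entrywise, reducing the matrix identity to a statement about the superring operations of $R$, and then reassembling via Lemma \ref{lembasic1}. Parts (a) and (b) are direct: the $(i,j)$-entry of $A\cdot 0$ is a finite sum $\sum_k a_{ik}\cdot 0$ whose every term is $\{0\}$ by the absorbing axiom, and an easy induction from $0+0=\{0\}$ collapses the sum to $\{0\}$; the symmetric argument handles $0\cdot A$. For (b), the $(i,j)$-entry of $A\cdot 1$ is $\sum_k a_{ik}\delta_{kj}$, in which every summand is $\{0\}$ except the $k=j$ term $\{a_{ij}\}$, and the multigroup identity $0+x=\{x\}$ collapses the sum to $\{a_{ij}\}$.

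Parts (c) and (d) are symmetric; I focus on (c). Pick $D\in A(B+C)$: there exists $M=(m_{kj})$ with $m_{kj}\in b_{kj}+c_{kj}$ such that $d_{ij}\in\sum_k a_{ik}m_{kj}$. The semi-distributive law of the superring gives $a_{ik}m_{kj}\subseteq a_{ik}(b_{kj}+c_{kj})\subseteq a_{ik}b_{kj}+a_{ik}c_{kj}$, so each relevant witness in $a_{ik}m_{kj}$ can be written as $p_{ijk}+q_{ijk}$ with $p_{ijk}\in a_{ik}b_{kj}$ and $q_{ijk}\in a_{ik}c_{kj}$. Using Lemma \ref{lembasic1}(a) to commute and regroup the finite multigroup sum, $\sum_k(p_{ijk}+q_{ijk})$ reorganizes as $e_{ij}+f_{ij}$ with $e_{ij}\in\sum_k a_{ik}b_{kj}$ and $f_{ij}\in\sum_k a_{ik}c_{kj}$, producing matrices $E\in AB$ and $F\in AC$ with $D\in E+F\subseteq AB+AC$. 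When $R$ is full, Lemma \ref{lembasic1}(c) promotes every semi-distributive inclusion to an equality, so retracing the argument in reverse gives the opposite inclusion.

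Part (e) is the main obstacle. Assume $R$ full. Given $D\in(AB)C$, there is a choice $u_{i\ell}\in\sum_k a_{ik}b_{k\ell}$ with $d_{ij}\in\sum_\ell u_{i\ell}c_{\ell j}$. Because $R$ is full, Lemma \ref{lembasic1}(c) yields $u_{i\ell}c_{\ell j}\subseteq\bigl(\sum_k a_{ik}b_{k\ell}\bigr)c_{\ell j}=\sum_k a_{ik}b_{k\ell}c_{\ell j}$, where the triple product is unambiguous thanks to associativity of the multimonoid product (axiom M3 applied to multiplication gives $(a_{ik}b_{k\ell})c_{\ell j}=a_{ik}(b_{k\ell}c_{\ell j})$ as sets). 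Substituting and invoking Lemma \ref{lembasic1}(a) to swap the order of summation, $d_{ij}$ lies in $\sum_k\sum_\ell a_{ik}b_{k\ell}c_{\ell j}$; a second use of full distributivity factors the inner sum as $a_{ik}\bigl(\sum_\ell b_{k\ell}c_{\ell j}\bigr)$, so we can choose $v_{kj}\in\sum_\ell b_{k\ell}c_{\ell j}=(BC)_{kj}$ with $d_{ij}\in\sum_k a_{ik}v_{kj}$, i.e., $D\in A(BC)$; the reverse inclusion is entirely symmetric. The delicate point throughout is bookkeeping: matrix (multi)products are defined by independent entrywise choices of witnesses, and every rearrangement or distribution step must simultaneously produce compatible choices across all entries. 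Fullness is essential precisely because it converts the one-sided semi-distributive inclusions into equalities, which is what permits the associativity argument in (e) to close.
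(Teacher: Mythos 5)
Your proof is correct and follows essentially the same route as the paper's: entrywise computation for (a) and (b), the semi-distributive law plus rearrangement of the finite multigroup sum for (c) and (d), and interchange of the two summations via fullness for (e). The only difference is presentational — you chase element-level witnesses where the paper computes directly with the entry sets — and your closing remark about compatible entrywise choices is a fair caveat that the paper glosses over but does not change the argument.
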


Firstly, let us explicit the notation for $AB$: we write
$$AB=\begin{pmatrix}D_{11} & D_{12} & \ldots & D_{1n} \\
 D_{21} & D_{22} & \ldots & D_{2n} \\
 \vdots & \vdots & \ddots & \vdots \\
 D_{m1} & D_{m2} & \ldots & D_{mn}
 \end{pmatrix}$$
 or $AB=(D_{ij})$ where for all $i,j$, $D_{ij}$ is the set
 $$D_{ij}=\sum^n_{k=1}a_{ik}b_{kj}=a_{i1}b_{1j}+a_{i2}b_{2j}+...+a_{in}b_{nk}\mbox{ (of course, this is an equality of sets)}.$$
 Alternatively, we can proceed more directly, simply writing
 $$AB=\begin{pmatrix}\sum^n_{k=1}a_{1k}b_{k1} & \sum^n_{k=1}a_{1k}b_{k2} & \ldots & \sum^n_{k=1}a_{1k}b_{kn} \\
 \sum^n_{k=1}a_{2k}b_{k1} & \sum^n_{k=1}a_{2k}b_{k2} & \ldots & \sum^n_{k=1}a_{2k}b_{kn} \\
 \vdots & \vdots & \ddots & \vdots \\
 \sum^n_{k=1}a_{mk}b_{k1} & \sum^n_{k=1}a_{mk}b_{k2} & \ldots & \sum^n_{k=1}a_{mk}b_{kn}
 \end{pmatrix}$$
 Now we proceed with the proof of the Lemma.

\begin{proof}[Proof of Lemma \ref{matrix2}]
The argument here is in fact very similar to those one used in linear algebra over fields.
 \begin{enumerate}[a -]
     \item Let $A\cdot 0=(D_{ij})$ (as explained above). For all $i,j$ we have
     $$D_{ij}=\sum_{k}a_{ik}0_{kj}=\sum_{k}a_{ik}\cdot0=\sum_{k}0=0.$$
     Then $A\cdot0=\{0\}$. The same reasoning provide $0\cdot A=\{0\}$.
     
     \item Let $A\cdot1=(D_{i,j}).$ Since $1 = (\delta_{i,j})$ with $\delta_{i,j} \in \{0,1\}$ and $\delta_{i,j} = 1$ iff $i=j$, we have
     $$D_{i,j} = \sum_{k} a_{i,k}\delta_{k,j} = a_{i,j} \cdot 1 = \{a_{i,j}\}.$$
     Thus, $A \cdot 1 = \{A\}$. Similarly, $1 \cdot A = \{A\}$.
     \item 
     \begin{equation*}
         \begin{split}
             A(B + C) &= (a_{i,j})_{i,j} \cdot (b_{i,j} + c_{i,j})_{i,j}\\
                    & = \left(\sum_{k} a_{i,k}(b_{k,j}+ c_{k,j})\right)_{i,j}\\
                    & \subseteq \left(\sum_{k} a_{i,k}b_{k,j}+ a_{i,k}c_{k,j}\right)_{i,j}\\
                    & = \left(\sum_{k} a_{i,k}b_{k,j}\right)_{i,j}+\left(\sum_{k} a_{i,k}c_{k,j}\right)_{i,j} = AB + BC.
         \end{split}
     \end{equation*}
     
     When $R$ is full, it is immediate from above that $A(B+C) = AB + AC$.
     
     \item Similar argument as above.
     \item Assume that $R$ is full. Then
     \begin{equation*}
         \begin{split}
             (AB)C & = \left[ \left(a_{i,j}\right)_{i,j} \cdot (b_{i,j})_{i,j}\right] \cdot (c_{i,j})_{i,j} \\
                    & = \left(\sum_{k} a_{i,k}b_{k,j} \right)_{i,j} \cdot (c_{i,j})_{i,j}\\
                    & = \left( \sum_{l} \left(\sum_{k} a_{i,k}b_{k,l}\right) \cdot c_{l,j}\right)_{i,j}\\
                    & = \left( \sum_{l} \sum_{k} a_{i,k}b_{k,l} c_{l,j}\right)_{i,j}\\
                    & = \left( \sum_{k} a_{i,k} \cdot \left(\sum_{l} b_{k,l} c_{l,j}\right)\right)_{i,j}\\
                    & = \left(a_{i,j} \right) \cdot \left(\sum_{l} b_{i,l} c_{l,j}\right)_{i,j} = A(BC).
         \end{split}
     \end{equation*}
 \end{enumerate}
\end{proof}

In fact, with Theorem \ref{matrix1} and Lemma \ref{matrix2} we conclude the following.
\begin{teo}\label{matrix3}
For a superring $R$, if $R$ is full then $M_n(R)$ is a full superring, that is non-commutative if $n \geq 2$.
\end{teo}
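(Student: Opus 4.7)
The plan is to verify conditions (i)--(v) of the superring definition for $M_n(R)$, check that its product is full, and then exhibit a non-commuting pair of matrices when $n\geq 2$. The bulk of the work has already been packaged into Theorem~\ref{matrix1} and Lemma~\ref{matrix2}, so only the rule of signs and the non-commutativity example require fresh argument.

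Concretely, condition (i) (commutative multigroup under $+$) is exactly Theorem~\ref{matrix1}. Condition (ii) (multimonoid under $\cdot$) is immediate from Lemma~\ref{matrix2}(e), which gives associativity $(AB)C=A(BC)$ using fullness of $R$, together with Lemma~\ref{matrix2}(b), which yields $1\cdot A=A\cdot 1=\{A\}$. Condition (iii) (absorbing $0$) is Lemma~\ref{matrix2}(a). For condition (iv) (weak distributivity) I would use the inclusions $A(B+C)\subseteq AB+AC$ and $(B+C)A\subseteq BA+CA$ of Lemma~\ref{matrix2}(c),(d); since $R$ is full these inclusions become equalities by the same lemma, which simultaneously delivers fullness of $M_n(R)$.

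The only axiom not already available off the shelf is the rule of signs (v): $-(AB)=(-A)B=A(-B)$. I would prove this entry-wise. For fixed $i,j$ the three candidate entry-sets are
\[
-\sum_k a_{ik}b_{kj},\qquad \sum_k(-a_{ik})b_{kj},\qquad \sum_k a_{ik}(-b_{kj}).
\]
The rule of signs in $R$ equates the three summands termwise, so the whole identity reduces to the fact that $-(x_1+\cdots+x_n)=(-x_1)+\cdots+(-x_n)$ as sets inside the commutative multigroup $(R,+,-,0)$. This last identity I would prove by induction on $n$ starting from the base case $-(x+y)=(-x)+(-y)$, which itself follows from two successive applications of axiom M1 of Definition~\ref{defn:multimonoid} (swapping the roles of an element and its negative).

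Finally, for non-commutativity when $n\geq 2$ and $R$ is non-trivial, I would take the matrix units $E_{12},E_{21}\in M_n(R)$ (a single $1$ in the indicated slot, zeros elsewhere). A direct expansion of the multi-sum defining the product, using $0+a=\{a\}$ and $0\cdot a=\{0\}=a\cdot 0$, produces $E_{12}\cdot E_{21}=\{E_{11}\}$ and $E_{21}\cdot E_{12}=\{E_{22}\}$, which are distinct singletons. The whole argument amounts to an inventory of Lemma~\ref{matrix2}, so I anticipate no real obstacle; the mildest subtlety is the inductive extension of additive inversion to finite multivalued sums, which is a routine multigroup calculation.
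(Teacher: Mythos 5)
Your proposal is correct and follows essentially the same route as the paper, which simply asserts the theorem as a consequence of Theorem~\ref{matrix1} and Lemma~\ref{matrix2} without further argument. You in fact go further than the text: you correctly identify that the rule of signs $-(AB)=(-A)B=A(-B)$ and the non-commutativity witness $E_{12}E_{21}=\{E_{11}\}\neq\{E_{22}\}=E_{21}E_{12}$ (which requires $R$ non-trivial) are not covered by those two results, and your entry-wise reduction to $-(x_1+\cdots+x_n)=(-x_1)+\cdots+(-x_n)$ via axiom M1 is a sound way to close that gap.
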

% \begin{proof}
% % \hlm{este resultado eh central para o trabalho que esta comecando do kaique santos! Um caso impornate, que poderiamos fazer com ele em outro trabalho, seria montarmos uma representaco 2x2 superalgebra de superquaternions sobre um superfield, podemos conversar rapidamente sobre isto tambem...}
% \end{proof}

We also have a generalized version of Lemma \ref{matrix2} (with the proof similar to the one given there).
\begin{lem}\label{matrix4}
 Let $R$ be a superring and $A,B,C,D,E,F$ matrices with $A\in M_{m\times n}(R)$, $B,C\in M_{n\times p}(R)$, $D,E\in M_{p\times m}(R)$ and $F\in M_{p\times q}(R)$. Then:
 \begin{enumerate}[a -]
     \item $A\cdot0_{n\times p}=\{0_{m\times p}\}$ and $0_{p\times n}\cdot A=\{0_{p\times n}\}$.
     \item $A\cdot1_{n\times n}=1_{m\times m}\cdot A=\{A\}$.
     \item $A(B+C)\subseteq AB+AC$, with equality if $R$ is full.
     \item $(D+E)A\subseteq DA+EA$, with equality if $R$ is full.
     \item If $R$ is full, then $(AB)F=A(BF)$.
 \end{enumerate}
\end{lem}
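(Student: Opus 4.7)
The plan is to imitate, essentially verbatim, the proof of Lemma \ref{matrix2}, but with careful bookkeeping of the rectangular indices. Since the entrywise definitions of sum and product of matrices only involve ``inner'' indices running over the common dimension, nothing in the arguments of Lemma \ref{matrix2} actually used that the matrices were square; all that was used was that the dimensions lined up for the relevant composition. So the proof will consist in rewriting each computation with the correct size subscripts on the zero and identity matrices.

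For (a), I would compute $A\cdot 0_{n\times p}=(D_{ij})$ with $D_{ij}=\sum_{k=1}^{n}a_{ik}\cdot 0_{kj}$; by absorption $a\cdot 0=\{0\}$ in a superring and the fact that a sum of zeros is $\{0\}$ (from Lemma \ref{lembasic1}), each $D_{ij}=\{0\}$, so $A\cdot 0_{n\times p}=\{0_{m\times p}\}$. The other equality is symmetric. For (b), the same Kronecker-delta computation as in Lemma \ref{matrix2} works; the only point to flag is that on the left one multiplies by $1_{n\times n}$ to preserve the column number $n$ of $A$, and on the right one multiplies by $1_{m\times m}$ to preserve the row number $m$. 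For (c) and (d), I would copy the entry-chasing calculation from the square case: expand the $(i,j)$-entry of $A(B+C)$ as $\sum_k a_{ik}(b_{kj}+c_{kj})$, apply the weak distributive law to obtain the inclusion $\subseteq\sum_k(a_{ik}b_{kj}+a_{ik}c_{kj})$, and then separate the multisum into the sum of the two entrywise multisums. Fullness of $R$ turns the inclusion into equality, exactly as in Lemma \ref{matrix2}.

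Item (e) is the only part that needs a little more than mechanical translation, but even here the square-case proof carries over unchanged once the sums are written with the correct ranges: for $A\in M_{m\times n}(R)$, $B\in M_{n\times p}(R)$, $F\in M_{p\times q}(R)$, the $(i,j)$-entry of $(AB)F$ is $\sum_{l=1}^{p}\bigl(\sum_{k=1}^{n}a_{ik}b_{kl}\bigr)c_{lj}$ and of $A(BF)$ is $\sum_{k=1}^{n}a_{ik}\bigl(\sum_{l=1}^{p}b_{kl}c_{lj}\bigr)$. Fullness allows us to distribute in both directions and commute the two finite summations by Lemma \ref{lembasic1}(a) (order of summation is immaterial), giving the common value $\sum_{k,l}a_{ik}b_{kl}c_{lj}$.

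The only real obstacle, and the reason fullness is needed in (c)--(e), is the distributive step: without fullness one only has the weak distributive inclusion from the definition of superring, which forces the $\subseteq$ in (c) and (d) and blocks the swap of sum and product needed in (e). This is precisely the same phenomenon encountered in the proof of Lemma \ref{matrix2}, so no new difficulty arises in the rectangular setting.
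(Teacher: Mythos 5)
Your proposal is correct and matches the paper exactly: the paper gives no separate argument for Lemma \ref{matrix4}, stating only that the proof is ``similar to the one given'' for Lemma \ref{matrix2}, and your entrywise computations with rectangular index bookkeeping are precisely that argument spelled out. (Only cosmetic quibbles: in (e) you write $c_{lj}$ for the entries of $F$, and note the statement's displayed sizes for $0_{p\times n}\cdot A$ already contain a typo you inherit rather than fix.)
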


Despite the fact that full associativity do not hold in $M_n(R)$ for a general superring $R$, we have the following useful results. We start with a technical Definition:

\begin{defn}
    Let $R$ be a superring. We say that $R$ is \textbf{proto-full} if for all $a,b,c,d\in R$
    $$[(ab+ac)d]\cap[a(bd+cd)]\ne\emptyset.$$
\end{defn}

\begin{lem}
    Let $R$ be a proto-full superring. Then for all $a,b_1,...,b_n,d\in R$ we have
    $$[(ab_1+...+ab_n)d]\cap[a(b_1d+...+b_nd)]\ne\emptyset.$$
\end{lem}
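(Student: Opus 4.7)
The plan is to reduce the $n$-summand claim directly to the proto-full hypothesis itself (the case $n=2$), by collapsing the first $n-1$ summands into a single representative element. No induction on $n$ is required beyond a short ``iterated weak distributivity'' lemma, and the proto-full property is invoked exactly once. For $n=1$ the claim $(ab_1)d\cap a(b_1d)\neq\emptyset$ is immediate from associativity of the multimonoid $(R,\cdot,1)$ (axiom M3, together with Lemma \ref{lembasic1}(a)), and for $n=2$ the claim is literally the definition of proto-full; so we may assume $n\geq 3$.

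The preliminary step is to show that for every element $b\in b_1+...+b_{n-1}$,
$$ab\subseteq ab_1+...+ab_{n-1}\qquad\text{and}\qquad bd\subseteq b_1d+...+b_{n-1}d.$$
This is a routine induction on the number of summands: writing $b\in e+b_{n-1}$ with $e\in b_1+...+b_{n-2}$, the superring axiom (iv) gives $ab\subseteq ae+ab_{n-1}$ and $bd\subseteq ed+b_{n-1}d$, and the induction hypothesis applied to $e$, combined with the obvious monotonicity $X\subseteq Y\Rightarrow X+Z\subseteq Y+Z$ of the multigroup sum of sets, closes the step. Such a $b$ exists because finite sums in a multigroup are always nonempty (the codomain of $+$ is $\mathcal{P}(R)\setminus\{\emptyset\}$).

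With this in hand, apply the proto-full hypothesis to the quadruple $(a,b,b_n,d)$ to obtain an element
$$r\in (ab+ab_n)d\,\cap\,a(bd+b_nd).$$
The two inclusions above give $ab+ab_n\subseteq\sum_{i=1}^{n}ab_i$ and $bd+b_nd\subseteq\sum_{i=1}^{n}b_id$, so $(ab+ab_n)d\subseteq (\sum_{i=1}^{n}ab_i)d$ and $a(bd+b_nd)\subseteq a(\sum_{i=1}^{n}b_id)$. Hence $r$ lies in the desired intersection, finishing the proof. The only point requiring care is the iterated weak distributivity: one must keep in mind that sums of sets are interpreted as unions of pointwise sums, and that distributivity supplies only one inclusion in a general (non-full) superring. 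Past that technicality, the essential insight is that collapsing $n-1$ of the $n$ summands into a single representative element reduces the whole $n$-ary statement to one invocation of proto-full, without requiring any further structural assumption on $R$.
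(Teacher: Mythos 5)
Your proof is correct. Note that the paper states this lemma with no proof at all (it passes directly to Lemma \ref{matrix5}), so there is no authors' argument to compare against; your write-up supplies the missing justification. The key move --- choosing a single representative $b\in b_1+\cdots+b_{n-1}$, using the weak distributive law (axiom iv of superrings) inductively to obtain $ab\subseteq ab_1+\cdots+ab_{n-1}$ and $bd\subseteq b_1d+\cdots+b_{n-1}d$, and then invoking proto-fullness exactly once on the quadruple $(a,b,b_n,d)$ --- is clean, and it sidesteps the natural but awkward alternative of inducting directly on the nonempty-intersection statement, which does not compose well (a witness for $n-1$ summands is a single element on which the proto-full condition gives no further handle). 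All the supporting facts you rely on are available in the paper: sums and products are nonempty-valued, iterated sums are left-nested unions of pointwise sums so monotonicity under $\subseteq$ and the identity $(ab_1+\cdots+ab_{n-1})+ab_n=ab_1+\cdots+ab_n$ are immediate from the definitions, and axiom M3 for the multiplicative multimonoid together with commutativity disposes of the case $n=1$. The only point deserving one more explicit word is the base of your iterated-distributivity induction: by the paper's convention the one-term sum $\sum_{i<1}b_i$ is the singleton $\{b_1\}$, so that case is trivial, and the two-term case is axiom iv verbatim. This is cosmetic; the argument stands as given.
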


Then rewriting the proof of Lemma \ref{matrix4}(e) we get the following.

\begin{lem}\label{matrix5}
    Let $R$ be a proto full superring and $A,B,C$ matrices with $A\in M_{m\times n}(R)$, $B\in M_{n\times p}(R)$ and $C\in M_{p\times q}(R)$. Then
    $$[(AB)C]\cap[A(BC)]\ne\emptyset.$$
\end{lem}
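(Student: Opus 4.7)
The plan is to mimic the proof of Lemma \ref{matrix4}(e), replacing each use of full distributivity by an application of the proto-full axiom or its extended form from the preceding lemma, and relying on the fact that in the multimonoid $(R,\cdot,1)$ multiplication is fully associative, so the triple product $a_{ik}b_{kl}c_{lj}$ is an unambiguous set.

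First, I would work entry by entry. Fix a position $(i,j)$ and consider the candidate entry sets
$S_1 := \sum_l \bigl(\sum_k a_{ik}b_{kl}\bigr)c_{lj}$ and $S_2 := \sum_k a_{ik}\bigl(\sum_l b_{kl}c_{lj}\bigr)$
for matrices in $(AB)C$ and $A(BC)$ respectively. Weak distributivity, applied iteratively, places both sets inside the common iterated sum $\sum_{k,l} a_{ik}b_{kl}c_{lj}$, whose value is independent of the order of summation by Lemma \ref{lembasic1}(a). The scalar step is to exhibit a single element of $S_1\cap S_2$: at each stage of ``redistributing'' a sum past a product — the places where the full-superring proof would invoke $(x+y)z = xz+yz$ or $x(y+z) = xy+xz$ — proto-full instead supplies an element in the intersection of the two parenthesizations, which is then propagated outward as the full triple sum is assembled.

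Once the scalar intersection is established for every $(i,j)$, the matrix statement follows by exploiting independence of the defining choices. The $(i,l)$-entry of a matrix $M\in AB$ only involves row $i$ of $A$ and column $l$ of $B$, so the rows of $M$ can be chosen independently across $i$; symmetrically, the columns of $N\in BC$ can be chosen independently across $j$. Hence, by selecting a row $(m_{il})_l$ for each $i$ and a column $(n_{kj})_k$ for each $j$ that together realize the scalar intersection at every $(i,j)$, and then picking $z_{ij}\in\sum_l m_{il}c_{lj} \cap \sum_k a_{ik}n_{kj}$ for each entry, the resulting matrix $Z=(z_{ij})$ simultaneously lies in $MC\subseteq (AB)C$ and $AN\subseteq A(BC)$.

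The main obstacle is the scalar step. The proto-full axiom has the rigid shape $(ab+ac)d \cap a(bd+cd)\ne\emptyset$ with a common outer left-factor $a$ and right-factor $d$, whereas our triple sums have outer factors $a_{ik}$ and $c_{lj}$ that vary with the summation indices. To overcome this, I would apply extended proto-full one summation at a time — with $a=a_{ik}$ held fixed while summing over $l$, and with $d=c_{lj}$ held fixed while summing over $k$ — invoking full associativity to shuttle the intermediate triple products between parenthesizations. Coordinating these local applications into a single globally consistent pair $(M,N)$ that realizes the intersection at every entry is the delicate bookkeeping that lifts the scalar result to the matrix level.
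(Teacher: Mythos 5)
Your overall strategy coincides with the paper's: the paper's entire proof of Lemma \ref{matrix5} is the remark that one obtains it by ``rewriting the proof of Lemma \ref{matrix4}(e)'', and you correctly unpack what that rewriting must accomplish --- an entry-wise intersection statement for the sets $S_1=\sum_l\left(\sum_k a_{ik}b_{kl}\right)c_{lj}$ and $S_2=\sum_k a_{ik}\left(\sum_l b_{kl}c_{lj}\right)$, followed by a coordination of the choices of $M\in AB$ and $N\in BC$ across entries. You also correctly identify the two places where this is non-trivial. The problem is that your proposal stops exactly there: neither difficulty is actually overcome, and the first is a genuine gap rather than bookkeeping.

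Concretely: the proto-full axiom and its $n$-ary extension assert $[(ab_1+\dots+ab_n)d]\cap[a(b_1d+\dots+b_nd)]\ne\emptyset$ for a \emph{single} common left factor $a$ and a \emph{single} common right factor $d$; the reason such a statement can hold is that the set $a(b_1+\dots+b_n)d$ sits inside both parenthesizations. For $S_1$ and $S_2$ the outer factors $a_{ik}$ and $c_{lj}$ vary with the summation index, and no analogous common core is exhibited. Knowing that $S_1$ and $S_2$ are both contained in $\sum_{l}\sum_{k}a_{ik}b_{kl}c_{lj}$ proves nothing about their intersection, and the plan of applying extended proto-full ``one summation at a time'' and then ``propagating outward'' cannot be executed as stated, because non-empty intersection is not transitive: from $X\cap Y\ne\emptyset$ and $Y\cap Z\ne\emptyset$ one cannot conclude $X\cap Z\ne\emptyset$. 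Until you produce an explicit nonempty set contained in both $S_1$ and $S_2$, or some other device, the scalar step is unproved. The global coordination is likewise real and left open: the row $(m_{il})_l$ chosen to realize the intersection at entry $(i,j)$ must serve simultaneously for every $j$, and the column $(n_{kj})_k$ for every $i$, and you give no argument that compatible choices exist. To be fair, the paper supplies none of these details either; but as written your proposal is an accurate map of what has to be proved, not a proof.
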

% \begin{proof}
%     In our matrix notation, lets consider
%     $$A=\begin{pmatrix}a_{11} & a_{12} & \ldots & a_{1n} \\
%  a_{21} & a_{22} & \ldots & a_{2n} \\
%  \vdots & \vdots & \ddots & \vdots \\
%  a_{m1} & a_{m2} & \ldots & a_{mn}
%  \end{pmatrix},\qquad B=\begin{pmatrix}b_{11} & b_{12} & \ldots & b_{1p} \\
%  b_{21} & b_{22} & \ldots & b_{2p} \\
%  \vdots & \vdots & \ddots & \vdots \\
%  b_{n1} & b_{n2} & \ldots & b_{np}
%  \end{pmatrix}\qquad\mbox{and }C=\begin{pmatrix}
%      c_1\\c_2\\\vdots\\c_p
%  \end{pmatrix}.$$
%  By the very definition we have
%  $$BC=\begin{pmatrix}
%      b_{11}c_1+b_{12}c_2+...+b_{1p}c_p\\ b_{21}c_1+b_{22}c_2+...+b_{2p}c_p\\ \vdots\\ b_{n1}c_1+b_{n2}c_2+...+b_{np}c_p
%  \end{pmatrix}$$
%  Then
%  \begin{align*}
%      A(BC)&=\begin{pmatrix}
%      b_{11}c_1+b_{12}c_2+...+b_{1p}c_p\\ b_{21}c_1+b_{22}c_2+...+b_{2p}c_p\\ \vdots\\ b_{n1}c_1+b_{n2}c_2+...+b_{np}c_p
%  \end{pmatrix}
%  \end{align*}
% \end{proof}

\begin{defn}\label{determinant}
Let $\mathcal A\subseteq M_n(R)$. We define the \textbf{determinant} of $\mathcal A$ as the \emph{subset} $\det(\mathcal A)\subseteq R$ given by the rule
$$\det(\mathcal A)=\bigcup_{A\in\mathcal A}\left\lbrace\sum_{\sigma\in S_n}\mbox{sgn}(\sigma)\prod^n_{j=1}a_{j\sigma(j)}\right\rbrace.$$
% $$\det(\mathcal A)=\bigcup_{A\in\mathcal A}\left\lbrace\sum_{\sigma\in S_n}\mbox{sgn}(\sigma)\prod^n_{j=1}a_{j\sigma(j)}\right\rbrace
% =\bigcup_{A\in\mathcal A}\left\lbrace\sum_{\sigma\in S_n}\mbox{sgn}(\sigma)\prod^n_{j=1}a_{\sigma(j)j}\right\rbrace.$$
If $\mathcal A=\{A\}$ we simply write $\det(A)$ for the above formula.
\end{defn}

\begin{lem}[Properties of Determinant]
$A,B\in M_n(R)$, $A=(a_{ij})$, $B=(b_{ij})$ and $\lambda\in R$. Then:
\begin{enumerate}[a -]
    \item $\det(\lambda A)\subseteq\lambda^n\det(A)$, with equality if $R$ is full;
    % \item $\det(AB)\cap\det(A)\det(B)\ne\emptyset$;
    % \item $\det(AB)\cap\det(BA)\ne\emptyset$;
    \item if there is an entire row or column of zeros in $A$ then $\det(A)=\{0\}$.
    \item if $A=(a_{ij})$ is a triangular matrix (and in particular, diagonal matrix) then $\det(A)=a_{11}a_{22}...a_{nn}$.
\end{enumerate}
\end{lem}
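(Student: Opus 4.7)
The plan is to treat the three parts separately, in each case working directly from Definition \ref{determinant} and exploiting the absorbing property of $0$, the commutative-associative multi-monoid structure on $R$, and the (weak or full) distributive law.

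For part (a), I first unpack membership: $s \in \det(\lambda A)$ means that there is a matrix $C = (c_{ij})$ with $c_{ij} \in \lambda \cdot a_{ij}$, choices $w_\sigma \in \prod_{j=1}^n c_{j\sigma(j)}$ for each $\sigma \in S_n$, and $s \in \sum_\sigma \mathrm{sgn}(\sigma) w_\sigma$. The commutative-associative multi-product gives
\[ \prod_{j=1}^n c_{j\sigma(j)} \subseteq \prod_{j=1}^n (\lambda \cdot a_{j\sigma(j)}) = \lambda^n \prod_{j=1}^n a_{j\sigma(j)}, \]
so each $w_\sigma$ lies in $\lambda^n v_\sigma$ for some $v_\sigma \in \prod_j a_{j\sigma(j)}$, and by the rule of signs $\mathrm{sgn}(\sigma) w_\sigma \in \lambda^n \mathrm{sgn}(\sigma) v_\sigma$. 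The last step passes $s \in \sum_\sigma \mathrm{sgn}(\sigma) w_\sigma$ into $\lambda^n \det(A)$: when $R$ is full, iterating full distributivity gives $\sum_\sigma \lambda^n \mathrm{sgn}(\sigma) v_\sigma = \lambda^n \sum_\sigma \mathrm{sgn}(\sigma) v_\sigma$, which sits inside $\lambda^n \det(A)$; the reverse inclusion $\lambda^n \det(A) \subseteq \det(\lambda A)$ needed for equality in the full case is then obtained by running the same chain of equalities backwards, witnessing each membership $c_{ij} \in \lambda a_{ij}$ via the same choices used to realize $t \in \lambda^n \det(A)$. For the general inclusion without fullness, the same witness-chain has to be carried out using only the forward weak distributive law, carefully arranging the iterated sum so that the $\lambda^n$ factor can be collected outside summand-by-summand.

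For part (b), suppose the $i$-th row of $A$ is identically zero. Then for every $\sigma$, the factor $a_{i\sigma(i)} = 0$ appears in $\prod_j a_{j\sigma(j)}$, and since $0 \cdot x = \{0\}$ in a superring, the whole product is $\{0\}$; hence each summand $\mathrm{sgn}(\sigma) \prod_j a_{j\sigma(j)}$ is $\{0\}$, and iterating $0 + 0 = \{0\}$ (a specialization of M2) yields $\det(A) = \{0\}$. The zero-column case is symmetric, reindexed via $\sigma \mapsto \sigma^{-1}$. For part (c), if $A$ is upper triangular then any $\sigma \neq \mathrm{id}$ has some index $j$ with $\sigma(j) < j$, so $a_{j\sigma(j)} = 0$ and the argument of (b) collapses the $\sigma$-summand to $\{0\}$. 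Only the identity permutation contributes a nontrivial term $a_{11} a_{22} \cdots a_{nn}$; the remaining $\{0\}$ summands are absorbed by M2 ($x + 0 = \{x\}$), leaving $\det(A) = a_{11} a_{22} \cdots a_{nn}$. The lower triangular case is symmetric, and the diagonal case is a specialization.

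The main obstacle is the final step in part (a) when $R$ is not full: passing $\lambda^n$ outside an iterated multi-sum is the reverse direction of the weak distributive law and does not hold for a generic superring. This step must be handled either by a careful bookkeeping of which witness is used in each summand (taking advantage of the fact that a single $\lambda^n$ is being factored out of every term), or by introducing an auxiliary hypothesis such as proto-fullness. Parts (b) and (c), by contrast, are essentially routine once the absorbing axiom for $0$ and M2 are in place.
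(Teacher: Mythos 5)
Your parts (b) and (c) are correct and coincide with the paper's own argument: a zero factor in every product forces each summand to be $\{0\}$ by the absorbing axiom, and for a triangular matrix every non-identity permutation hits a zero entry, so only $a_{11}a_{22}\cdots a_{nn}$ survives.

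Part (a) is where the genuine gap lies, and you have located it exactly but cannot close it, because the containment in the stated direction is simply false for general superrings. The step you flag --- passing from $\sum_{\sigma}\mathrm{sgn}(\sigma)\,\lambda^n\prod_j a_{j\sigma(j)}$ into $\lambda^n\sum_{\sigma}\mathrm{sgn}(\sigma)\prod_j a_{j\sigma(j)}$ --- is the reverse of the weak distributive law, and no bookkeeping rescues it: already in the kaleidoscope $X_2$ of Example \ref{kaleid} one has $2\cdot 1+2\cdot(-1)=2-2=X_2$ while $2(1-1)=\{-2,0,2\}$, so $\lambda a+\lambda b\not\subseteq\lambda(a+b)$. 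This lifts to a counterexample to the statement itself: take $\lambda=2$ and $A$ the all-ones $2\times 2$ matrix over $X_2$; then $\det(\lambda A)=2\cdot 2-2\cdot 2=X_2$, whereas $\lambda^{2}\det(A)=2\cdot\bigl(1-1\bigr)=\{-2,0,2\}$, so $\det(\lambda A)\not\subseteq\lambda^{2}\det(A)$. What the weak distributive law actually yields for an arbitrary superring is the opposite containment $\lambda^{n}\det(A)\subseteq\det(\lambda A)$, with equality when $R$ is full --- and your argument for the full case is fine. Note that the paper's own proof writes the critical ``$\subseteq$'' at precisely this step without justification, so it shares the defect; your instinct that fullness (or some proto-fullness hypothesis) is genuinely needed for the inclusion as stated is the correct diagnosis, but the general-$R$ half of your part (a) cannot be completed as proposed.
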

\begin{proof}
$ $
\begin{enumerate}[a -]
    \item Using the very Definition we get
    \begin{align*}
        \det(\lambda A)&=\sum_{\sigma\in S_n}\mbox{sgn}(\sigma)\prod^n_{j=1}(\lambda a_{j\sigma(j)})
        =\sum_{\sigma\in S_n}\mbox{sgn}(\sigma)\lambda^n\prod^n_{j=1}a_{j\sigma(j)}\\
        &\subseteq \lambda^n\left(\sum_{\sigma\in S_n}\mbox{sgn}(\sigma)\prod^n_{j=1}a_{j\sigma(j)}\right)
        =\lambda^n\det(A)
    \end{align*}
    \item In this case we have $0\in\{a_{1\sigma(1)},a_{2\sigma(2)},...,a_{n\sigma(n)}\}$ for all $n\in S_n$. Then
    $$\prod^n_{j=1}a_{j\sigma(j)}=\{0\}\mbox{ for all }\sigma\in S_n,$$
    implying $\det(A)=\{0\}$.
    \item Follow immediately from Definition \ref{determinant}.
\end{enumerate}
\end{proof}

\section{Linear systems over superfields}

Throughout this entire Section fix a superfield $F$.
\begin{defn}\label{lin-equation}
A \textbf{linear equation} is an equation (term in the language of superfields) of type
$$Ax\subseteq B$$
where $A\in M_{1\times n}(F)$ ($n\in\mathbb N$), $x$ is a $n\times1$ vector of variables and $B\subseteq F$.
\end{defn}

\begin{rem}
We are defining ''linear equations'' (and more lately, ''linear systems'') in terms of matrices. We do this because for superfields we cannot agglutinate scalars and variables as we do in general linear algebra. For example, there is no reason for ''$a_1x_1+b_1x_1$'' be equal to ''$(a_1+b_1)x_1$''\footnote{Or saying in another words, in order to obtain $a_1x_1+b_1x_1=(a_1+b_1)x_1$ we should Define what would be a ''full'' variable, which is not a standard procedure in logic.}.    
\end{rem}

Despite this Remark, given a linear equation $Ax\subseteq B$, we can ''colloquially'' write
$$a_1x_1+a_2x_2+...+a_nx_n\subseteq B,$$
with $a_1,...,a_n\in F$. Of course, we could consider the equation
$$a_1x_1+a_2x_2+...+a_nx_n\supseteq B$$
as a linear one and proceed with two types of linear equations. But the type considered in \ref{lin-equation} seems to be (at first sight) more ''natural''.

We can use to this ''coloquial'' to write our equations (and further, systems), and while we are dealing with one equation (system), we will proceed with this ''coloquial'' language. But in order to get more general proofs and Definitions, we will always proceed with matrices.

\begin{defn}
 A \textbf{solution (weak solution)} of a linear equation $Ax\subseteq B$, is a matrix $d\in M_{n\times1}(F)$ such that $Ad\subseteq B$ ($Ad\cap B\ne\emptyset$). 
\end{defn}

\begin{defn}
A \textbf{linear system} is a conjunction of equations (term in the language of superfields) of type
$$Ax\subseteq B$$
where $A\in M_{m\times n}(F)$ ($n\in\mathbb N$), $x$ is a $n\times1$ vector of variables and $B\subseteq M_{m\times1}(F)$.
\end{defn}

 In this sense, a Linear system can be colloquially represented as usual
 $$S:\begin{cases}
 a_{11}x_1+a_{12}x_2+...+a_{1n}x_n\subseteq B_1\\
 a_{21}x_1+a_{22}x_2+...+a_{2n}x_n\subseteq B_2\\
 \vdots \\
 a_{m1}x_1+a_{m2}x_2+...+a_{mn}x_n\subseteq B_m\\
 \end{cases}$$
 
 A \textbf{(weak) solution} of a Linear system is a tuple $(d_1,...,d_n)$ such that $Ad\subseteq B$ ($Ad\cap B\ne\emptyset$).

\begin{defn}
A Linear system $Ax\subseteq B$ is \textbf{scaled} if $A$ is a upper triangular matrix.
\end{defn}

In the usual representation, a scaled linear system has the form:
 $$\begin{cases}
 a_{1r_1}x_1+\phantom{..........................}...\phantom{...}+a_{1n}x_n\subseteq B_1 \\
 \phantom{a_{1r_1}x_1+}a_{2r_2}x_2+\phantom{...............}...\phantom{...}+a_{1n}x_n\subseteq B_2 \\
 \vdots\\
 \phantom{a_{1r_1}x_1+a_{2r_2}x_2+}a_{nr_k}x_k+\phantom{..}...\phantom{...}+a_{nn}x_n\subseteq B_k
 \end{cases}$$
with $r_j\ge1$, and $a_{jr_j}\ne0$, $j=1,...,k$ e $r_1<r_2<...<r_k$. For a scaled system we have three situations:
\begin{enumerate}[I -]
     \item The last equation is of type
     $$0x_1+...+0x_n\subseteq B_p\mbox{ with }0\notin B_p.$$
     In this case $S$ is impossible.
     
     \item There is no equation of type (I) and $p=n$. 
     
     \item There is no equation of type (I) and $p<n$. 
 \end{enumerate}
 Suppose $S$ of type (II). Then we have a situation
  $$\begin{cases}
 a_{11}x_1+\phantom{..........................}...\phantom{...}+a_{1n}x_n\subseteq B_1 \\
 \phantom{a_{22}x_2+}a_{23}x_3+\phantom{...............}...\phantom{...}+a_{2n}x_n\subseteq B_2 \\
 \vdots\\
 \phantom{a_{1r_1}x_1+a_{2r_2}x_2+a_{nr_k}x_k+}...+a_{nn}x_n\subseteq B_n
 \end{cases}$$
 with $a_{ii}\ne0$ for all $i=1,...,n$. Getting $x_1,...,x_n$ recursively by suitable choices
 \begin{align}\label{rec-sol}
 \begin{cases}
     x_n&\in a^{-1}_{nn}B_n \\
     x_{k}&\in a^{-1}_{kk}B_k-a^{-1}_{kk}a_{k(k+1)}x_{k+1}-...-a^{-1}_{kk}a_{kn}x_{n}\mbox{ for }k=n-1,...,1
 \end{cases}
 \end{align}
 we have a weak solution of the system $S$ (this solution is weak basically because $a^{-1}_{kk}a_{kn}$ is not a singleton in general). The same reasoning shows that for a scaled system of type (III), we can find a parametric weak solution for the system. 
 
 \begin{ex}\label{sistem-solved}
 Consider $n=2$ and the system over $F$,
 $$\begin{pmatrix}a & b\\ 0 & c\end{pmatrix}\begin{pmatrix}x\\ y\end{pmatrix}\subseteq\begin{pmatrix}D_1\\ D_2\end{pmatrix}$$
 or in our ''coloquial'' representation, the system
 $$\begin{cases}
     ax+by\subseteq D_1 \\\phantom{1ax+}cy\subseteq D_2
 \end{cases}\mbox{ with }a,c\ne0.$$
 Since $D_2\subseteq c(c^{-1}D_2)$, for all $d_2\in D_2$ there exist $z\in c^{-1}D_2$ with $d_2\in cz$. Pick $y_0=z$. So we have $$cy_0\subseteq c(c^{-1}D_2)\mbox{ with }cy_0\cap D_2\ne\emptyset.$$ Hence we get $y_0\in c^{-1}D_2$ and we can choose $x_0\in a^{-1}D_1-by_0$ in order to obtain
$$ax_0+by_0\subseteq a(a^{-1}D_1-by_0)+by_0\subseteq aa^{-1}D_1-aa^{-1}by_0+by_0\mbox{ and }ax_0+by_0\cap D_1\ne\emptyset.$$  
 \end{ex}

 Of course, linear systems over superfields yields to more flexibility than linear systems over fields. It is ''easier'' to get a weak solution of a linear systems over superfields than over a field as we see in the Example below.

\begin{ex}
Let $F=\mathbb Q/_m\mathbb Q^{*2}$. We have $2,5\notin D(\langle1,1\rangle)$, because $2=1\cdot1^2+1\cdot1^2$ and $5=1\cdot2^2+1\cdot1^2$. Then $\overline2,\overline5\in\overline1+\overline1$ and the system
$$\begin{cases}
     x+y\subseteq\{\overline1\} \\\phantom{ax+}y\subseteq \{\overline5\}
 \end{cases}$$
 over $F$ has at least a weak solution $x=y=1$.
\end{ex}
 
\begin{defn}[Elementary Operations]
Let $A\in M_{m\times n}(F)$. The \textbf{elementary operations are}:
\begin{enumerate}[I -]
    \item \textbf{Permute} lines $i$ e $j$; which will be indicated by $L_i\leftrightarrow L_j$;
    \item \textbf{Multiply} each coefficient of a line $i$ by an element $\lambda\ne0$ in $F$; which will be indicated by $L_i\leftarrow\lambda L_i$;
    \item \textbf{Sum} line $i$ with line $j$ and keep the result in line $i$; which will be indicated by $L_i\leftarrow L_i+L_j$.
\end{enumerate}
\end{defn} 

Of course, given a linear system $Ax\subseteq B$, we generate more than one system after the application of a sequence of elementary operations on the matrices $A$ and $B$. We denote the systems obtained by a set of systems $Ax\subseteq B$ (with $A\subseteq M_{m\times n}(F)$, $B\subseteq M_{m\times 1}(F)$) after the sequence of elementary operations $O=\{o_1,...,o_n\}$ by $(Ax\subseteq B)^O$.

The elementary operations defined above could be described in terms of matrix multiplication (as we usually do for fields). For example, considering the matrix $A\in M_{2\times 2}(F)$ given by
$$A=\begin{pmatrix}a & b\\ c & d\end{pmatrix}$$
the application of $L_1\leftrightarrow L_2$ is just
$$\begin{pmatrix}0 & 1\\ 1 & 0\end{pmatrix}\begin{pmatrix}a & b\\ c & d\end{pmatrix}=\begin{pmatrix}c & d\\ a & b\end{pmatrix}$$

If $R$ is a proto-full superfield, to realize an elementary operation on the system $Ax\subseteq B$ is equivalent to multiply $A$ and $B$ by an elementary matrix\footnote{Elementary matrices are a standard topic in many Linear Algebra books, but For a quick reference, consult \url{https://en.wikipedia.org/wiki/Elementary_matrix}.} $E\in M_{m\times m}(F)$ in order to obtain the system $(EA)x\subseteq (EB)$.

\begin{lem}\label{element-sol}
Let $Ax\subseteq B$ be a set of Linear systems and $O=\{o_1,...,o_n\}$. Then every solution of a system in $Ax\subseteq B$ is a solution in some system in $(Ax\subseteq B)^O$. 
If $F$ is full, then every weak solution of a system in $Ax\subseteq B$ is a weak solution in some system in $(Ax\subseteq B)^O$.
\end{lem}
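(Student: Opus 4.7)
My plan is to induct on the length $n$ of the operation sequence $O=\{o_1,\ldots,o_n\}$; the base case $n=0$ is vacuous, and the inductive step reduces everything to the case of a single elementary operation applied to a single system $Ax\subseteq B$. Since both the solution predicate $Ad\subseteq B$ and the weak-solution predicate $Ad\cap B\neq\emptyset$ are conjunctions over the rows of the system, it suffices to analyze the row(s) touched by the operation. I would then handle each of the three types of elementary operation in turn.

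For Type~I (permutation $L_i\leftrightarrow L_j$), the resulting system is uniquely determined by swapping rows $i$ and $j$ of $A$ and of $B$, so preservation of solutions and weak solutions is immediate. For Type~III ($L_i\leftarrow L_i+L_j$), the new coefficients satisfy $a'_{ik}\in a_{ik}+a_{jk}$ and the new right-hand side is $B'_i=B_i+B_j$. Given $c'\in\sum_k a'_{ik}d_k$ with witnesses $y'_k\in a'_{ik}d_k$, weak distributivity yields $y'_k\in a_{ik}d_k+a_{jk}d_k$; extracting $u_k\in a_{ik}d_k$ and $v_k\in a_{jk}d_k$ with $y'_k\in u_k+v_k$ and rearranging in the commutative additive multigroup gives
\[
c'\in(u_1+v_1)+\cdots+(u_n+v_n)\subseteq(u_1+\cdots+u_n)+(v_1+\cdots+v_n)\subseteq B_i+B_j.
\]
The weak-solution version proceeds analogously, starting from witnesses in the row-intersections with $B_i$ and $B_j$ and combining them.

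For Type~II ($L_i\leftarrow\lambda L_i$, $\lambda\neq 0$), with $a'_{ik}\in\lambda a_{ik}$ and $B'_i=\lambda B_i$, I would invoke axiom~M3 of the multiplicative multimonoid: from $y'_k\in a'_{ik}d_k$ and $a'_{ik}\in\lambda a_{ik}$ one obtains $u_k\in a_{ik}d_k$ with $y'_k\in\lambda u_k$, so a typical $c'\in\sum_k a'_{ik}d_k$ lies in $\lambda u_1+\cdots+\lambda u_n$ with $u_1+\cdots+u_n\subseteq B_i$. For the weak-solution clause, I would conversely start from a witness $c\in\sum_k a_{ik}d_k\cap B_i$, pick $c''\in\lambda c\subseteq\lambda B_i$, and apply fullness to rewrite $c''\in\lambda(y_1+\cdots+y_n)=\lambda y_1+\cdots+\lambda y_n$ with $y_k\in a_{ik}d_k$; a further use of fullness (threaded through the associativity axiom M3) lets one read off $a'_{ik}\in\lambda a_{ik}$ with a witness in $\sum_k a'_{ik}d_k\cap\lambda B_i$.

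The main obstacle is precisely the Type~II case. Weak distributivity and M3 carry $\lambda$ through sums and products in only one direction, and the conversion between $\lambda y_1+\cdots+\lambda y_n$ and $\lambda(y_1+\cdots+y_n)$ in the opposite direction is exactly the content of the fullness hypothesis; this is why fullness appears explicitly in the weak-solution clause. The delicate part of the argument is therefore to select the auxiliary elements $a'_{ik}$ and $y'_k$ so that the remaining inclusions line up with the one-sided direction afforded by weak distributivity, exploiting the $\subseteq$-shape of the solution predicate in the first clause and the freedom of choice in $\lambda a_{ik}$ throughout.
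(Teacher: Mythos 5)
Your overall strategy --- induct on the length of $O$, reduce to a single elementary operation, and handle the row-addition case via weak distributivity plus rearrangement in the commutative additive multigroup --- is the same as the paper's, and your Type~III argument for the first (strong-solution) clause is correct and in fact written out more carefully than in the paper. However, you have misplaced where the fullness hypothesis does its work, and this leaves two genuine gaps. First, the Type~III weak-solution case does \emph{not} ``proceed analogously'': from witnesses $c_i\in\left(\sum_k a_{ik}d_k\right)\cap B_i$ and $c_j\in\left(\sum_k a_{jk}d_k\right)\cap B_j$ you obtain a point $c\in c_i+c_j$ lying in $\left(\sum_k a_{ik}d_k\right)+\left(\sum_k a_{jk}d_k\right)$ and in $B_i+B_j$, but to conclude that $c\in\sum_k a'_{ik}d_k$ for \emph{some} choice of $a'_{ik}\in a_{ik}+a_{jk}$ you need the reverse inclusion $\left(\sum_k a_{ik}d_k\right)+\left(\sum_k a_{jk}d_k\right)\subseteq\bigcup_{a'}\sum_k a'_{ik}d_k$, and weak distributivity only gives the forward one. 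This is precisely the step at which the paper invokes fullness (the equality $(a+b)d=ad+bd$), and it is the reason the weak-solution clause carries that hypothesis --- not Type~II.

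Second, your Type~II argument is left unresolved, and as you have set it up it cannot be closed for the \emph{first} clause, where fullness is not available: you arrive at $c'\in\lambda u_1+\cdots+\lambda u_n$ with $u_1+\cdots+u_n\subseteq B_i$, but passing to $c'\in\lambda(u_1+\cdots+u_n)\subseteq\lambda B_i$ requires $\lambda u_1+\cdots+\lambda u_n\subseteq\lambda(u_1+\cdots+u_n)$, which is again the fullness direction that weak distributivity does not supply; and since distinct elements $c'$ of $\sum_k a'_{ik}d_k$ arise from distinct witnesses $u_k$, no single clever choice of the $a'_{ik}\in\lambda a_{ik}$ repairs this. Your closing paragraph names this tension but does not resolve it. (For what it is worth, the paper itself merely asserts that operations (I) and (II) preserve solutions and writes out the argument only for (III), so it does not supply the missing Type~II case either; a complete treatment should either close that case or restrict the scaling operation to situations where $\lambda a_{ik}$ and $\lambda B_i$ behave single-valuedly.)
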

\begin{proof}
We only need to deal with the elementary operations. Consider a system
$$\begin{pmatrix}
 a_{11} & a_{12} & \ldots & a_{1n} \\
 \vdots & \vdots & \ddots & \vdots \\
 a_{m1} & d_{m2} & \ldots & a_{mn}
 \end{pmatrix}
 \begin{pmatrix}x_1\\x_2\\\vdots\\x_n\end{pmatrix}
 \subseteq
 \begin{pmatrix}B_1\\B_2\\\vdots\\B_n\end{pmatrix}$$
 We already know that operations (I) and (II) preserves solutions. Now consider without loss of generalization the elementary operation $L_1\leftarrow L_1+L_2$. Then we arrive at the set of systems
 $$\begin{pmatrix}
 a_{11}+a_{21} & a_{12}+a_{22} & \ldots & a_{1n}+a_{2n} \\
 \vdots & \vdots & \ddots & \vdots \\
 a_{m1} & d_{m2} & \ldots & a_{mn}
 \end{pmatrix}
 \begin{pmatrix}x_1\\x_2\\\vdots\\x_n\end{pmatrix}
 \subseteq
 \begin{pmatrix}B_1+B_2\\B_2\\\vdots\\B_n\end{pmatrix}$$
 If we get $d_1,....,d_n$ with
 $$\begin{pmatrix}
 a_{11} & a_{12} & \ldots & a_{1n} \\
 \vdots & \vdots & \ddots & \vdots \\
 a_{m1} & d_{m2} & \ldots & a_{mn}
 \end{pmatrix}
 \begin{pmatrix}d_1\\d_2\\\vdots\\d_n\end{pmatrix}
 \subseteq \begin{pmatrix}B_1\\B_2\\\vdots\\B_n\end{pmatrix}$$
 in particular
 $$(a_{11}d_1+...+d_{1n}d_n)+(a_{21}d_1+...+d_{2n}d_n)\subseteq B_1+B_2,$$
 and
 $$\begin{pmatrix}
 a_{11}+a_{21} & a_{12}+a_{22} & \ldots & a_{1n}+a_{2n} \\
 \vdots & \vdots & \ddots & \vdots \\
 a_{m1} & d_{m2} & \ldots & a_{mn}
 \end{pmatrix}
 \begin{pmatrix}d_1\\d_2\\\vdots\\d_n\end{pmatrix}
\subseteq\begin{pmatrix}B_1+B_2\\B_2\\\vdots\\B_n\end{pmatrix}$$
proving (after induction) that every solution of $Ax\subseteq B$ is a solution $(Ax\subseteq B)^O$. For the weak solution part,
 suppose $F$ is full and $d_1,....,d_n$ with
 $$\begin{pmatrix}
 a_{11} & a_{12} & \ldots & a_{1n} \\
 \vdots & \vdots & \ddots & \vdots \\
 a_{m1} & d_{m2} & \ldots & a_{mn}
 \end{pmatrix}
 \begin{pmatrix}d_1\\d_2\\\vdots\\d_n\end{pmatrix}
 \cap\begin{pmatrix}B_1\\B_2\\\vdots\\B_n\end{pmatrix}\ne\emptyset$$
 In particular
 $$[(a_{11}+a_{21})d_1+...+(a_{1n}+a_{2n})d_n]=(a_{11}d_1+...+d_{1n}d_n)+(a_{21}d_1+...+d_{2n}d_n)\cap (B_1+B_2)\ne\emptyset,$$
 
 and
 $$\begin{pmatrix}
 a_{11}+a_{21} & a_{12}+a_{22} & \ldots & a_{1n}+a_{2n} \\
 \vdots & \vdots & \ddots & \vdots \\
 a_{m1} & d_{m2} & \ldots & a_{mn}
 \end{pmatrix}
 \begin{pmatrix}d_1\\d_2\\\vdots\\d_n\end{pmatrix}
\cap\begin{pmatrix}B_1+B_2\\B_2\\\vdots\\B_n\end{pmatrix}\ne\emptyset.$$
proving (after induction) that every weak solution of $Ax\subseteq B$ is weak a solution $(Ax\subseteq B)^O$.
\end{proof}

Given a system $Ax\subseteq B$, we can obtain a set of scaled systems $(Ax\subseteq B)^{scaled}$,  after a finite sequence of elementary operations in the same way as usual. Unfortunately, despite the result obtained in Lemma \ref{element-sol} we do not know if solutions of $(Ax\subseteq B)^{scaled}$ are solutions of $Ax\subseteq B$. 

% Then we have the following:
% \begin{defn}
% A system $Ax\subseteq B$ is \textbf{soluble} if $(Ax\subseteq B)^{scale}$ does not contain an impossible system and there exists a weak solution of $(Ax\subseteq B)^{scale}$ that is also a weak solution of $Ax\subseteq B$.
% \end{defn}

From now on, given a system $Ax\subseteq B$, \textbf{to solve} $Ax\subseteq B$ will have the meaning \textbf{to find a weak solution of} $Ax\subseteq B$, and a \textbf{$n\times n$ system} will mean a system $Ax\subseteq B$ with $A\in M_{n\times n}(F)$ (and $B\in M_{n\times 1}(F)$).

\begin{defn}
Let $A=(a_{ij})\in M_n(F)$ and denote $A_i=(a_{i1},...,a_{in})$ the $i$-th row and $A^j=(a_{1j},...,a_{nj})$ the $j$-th column We say that $A_i$ is a linear combination of $\{A_1,...,A_{i-1},A_{i+1},...,A_n\}$ if there exist $a_1,...,a_{i-1},a_{i+1},...,a_n\in F$ such that
$$A_i\cap\left[\left(\sum^{r_1}_{j=1}\lambda_{j1}\right)A_1+...+\left(\sum^{r_{i-1}}_{j=1}\lambda_{j(i-1)}\right)A_{i-1}+\left(\sum^{r_{i+1}}_{j=1}\lambda_{j(i+1)}\right)A_{i+1}+...+\left(\sum^{r_n}_{j=1}\lambda_{jn}\right)A_n\right]\ne\emptyset.$$
\end{defn}

% \begin{lem}\label{scal2}
% For a matrix $A=(a_{ij})\in M_n(F)$ over a superfield $F$, denote $A_i=(a_{i1},...,a_{in})$ the $i$-th row and $A^j=(a_{1j},...,a_{nj})$ the $j$-th column. If $0\in\det A$ then $A_i$ is a Linear combination of the other rows or $A^j$ is a Linear combination of the other columns for some $i$ or $j$.
% \end{lem}
% \begin{proof}
% Using Lemma \ref{scal1}, if $0\in\det(A)$ then $0\in\det(A^{scaled})$. But if $0\in\det(A^{scaled})$ then $A_i$ is a Linear combination of the other rows or $A^j$ is a Linear combination of the other columns for some $i$ or $j$.
% \end{proof}

% \begin{lem}\label{scal3}
% For a matrix $A=(a_{ij})\in M_n(F)$ over a superfield $F$, denote $A_i=(a_{i1},...,a_{in})$ the $i$-th row and $A^j=(a_{1j},...,a_{nj})$ the $j$-th column. If $A$ is not invertible then $A_i$ is a Linear combination of the other rows or $A^j$ is a Linear combination of the other columns for some $i$ or $j$.
% \end{lem}
% \begin{proof}
    
% \end{proof}

\begin{lem}\label{scal4}
    Let $F$ be a superfield and $A\in M_n(F)$ a upper triangular matrix. Then $A$ is invertible iff $0\notin\det(A)$.
\end{lem}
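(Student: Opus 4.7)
The plan is to exploit the formula $\det(A)=a_{11}a_{22}\cdots a_{nn}$ for upper triangular $A$ (from the previous lemma), which, since $F$ is a superdomain, means $0\notin\det(A)$ is equivalent to $a_{ii}\ne 0$ for every $i$. So I need to prove that an upper triangular $A\in M_n(F)$ is invertible if and only if every $a_{ii}\ne 0$.

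For the forward direction ($\Rightarrow$), I argue by contrapositive. Suppose $a_{ii}=0$ for some $i$, and take $i$ minimal. Assume there is some $B=(b_{ij})$ with $I\in BA$; I look at the $i$-th row of $BA$. Because $A$ is upper triangular, $(BA)_{i,j}=\sum_{k=1}^{j}b_{ik}a_{kj}$. I run a downward induction on $j$, from $1$ to $i-1$: at $j=1$ the sum collapses to $b_{i1}a_{11}$, and since $0\in b_{i1}a_{11}$ with $a_{11}\ne 0$, the superdomain property forces $b_{i1}=0$; at step $j$, the inductive hypothesis makes the first $j-1$ summands equal to $\{0\}$, so $(BA)_{i,j}=b_{ij}a_{jj}$ and again $b_{ij}=0$ by the superdomain property (since $a_{jj}\ne 0$ by minimality of $i$). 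After this induction, $(BA)_{i,i}=\sum_{k=1}^{i} b_{ik}a_{ki}$ has every summand equal to $\{0\}$ (the first $i-1$ because $b_{ik}=0$, the last because $a_{ii}=0$), so $(BA)_{i,i}=\{0\}$. This contradicts $1\in (BA)_{i,i}$ since $1\ne 0$ in the superfield $F$.

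For the converse ($\Leftarrow$), I proceed by induction on $n$. The base case $n=1$ is immediate because $a_{11}\ne 0$ has a multiplicative inverse in $F$. For the inductive step, write $A$ in block form
$$A=\begin{pmatrix}a_{11} & v \\ 0 & A'\end{pmatrix},$$
apply induction to the upper triangular $A'$ with nonzero diagonal to get $B'$ with $I_{n-1}\in A'B'\cap B'A'$, and pick $c\in F$ with $a_{11}c\ni 1$. I look for $B$ of the form $\begin{pmatrix}c & w\\ 0 & B'\end{pmatrix}$ for a suitable row vector $w$. The diagonal blocks of $AB$ and $BA$ automatically contain $1$ and $I_{n-1}$, so the two remaining conditions are $a_{11}w+vB'\ni 0$ (for $I\in AB$) and $cv+wA'\ni 0$ (for $I\in BA$) componentwise. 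Since $I\in B'A'$, one obtains $v\in v(B'A')=(vB')A'$, producing a specific $\beta\in vB'$ with $v\in \beta A'$; then the weak associativity axiom M3 applied to the pair $(c,a_{11})$ with $ca_{11}\ni 1$ yields, for each $j$, a $w_j$ with $-\beta_j\in a_{11}w_j$, giving the first condition. The second condition then follows by combining M3, commutativity (M4), and the relation $v\in\beta A'$ to transport the cancellation from one side to the other.

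The main obstacle will be verifying, in the inductive step of the converse, that the same $w$ works for both $AB$ and $BA$. Over a field this is automatic because inverses are unique and both sides reduce to the same equation, but over a superfield the multivalued product means one must track elements through M3 to guarantee that the witness $\beta\in vB'$ with $v\in \beta A'$ produces a $w$ satisfying both relations simultaneously. The forward direction is clean: it follows purely from the superdomain property of $F$ applied inductively along the $i$-th row of $BA$.
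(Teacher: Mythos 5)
Your forward direction is correct, and in fact more careful than the paper's: by taking $i$ minimal with $a_{ii}=0$ and propagating $b_{i1}=\dots=b_{i,i-1}=0$ along the $i$-th row of $BA$ via the superdomain property, you rule out \emph{every} candidate inverse $B$, whereas the paper only computes $AB$ and $BA$ for an upper triangular $B$ before asserting the conclusion for all $B$. The initial reduction to ``$A$ invertible iff $a_{ii}\ne 0$ for all $i$'' is the same as the paper's.

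The converse, however, has a genuine gap. Your key step is ``$v\in v(B'A')=(vB')A'$'', from which you extract a witness $\beta\in vB'$ with $v\in\beta A'$. But associativity of the multivalued matrix product is only available when the underlying superring is \emph{full} (Lemma \ref{matrix2}(e)), while the lemma is stated for an arbitrary superfield; the paper even exhibits $(AB)C\ne A(BC)$ over the kaleidoscope $X_2$. For a general superfield the weak distributive law only places $v(B'A')$ and $(vB')A'$ inside a common larger set, so from $I\in B'A'$ you get $v\in v(B'A')$ but not the $\beta$ you need. The subsequent ``transport'' of the cancellation from the $AB$-condition to the $BA$-condition --- precisely the point you yourself flag as the main obstacle --- is then left unargued and rests on that unavailable witness. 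The paper sidesteps all of this: it takes $B$ upper triangular with $b_{ii}=a_{ii}^{-1}$ and chooses the superdiagonal entries recursively from the bottom-right corner (a back-substitution), at each stage placing $b_{ij}$ inside an explicit multivalued expression so that the $(i,j)$ entries of both $AB$ and $BA$ contain $0$, with no appeal to associativity of the matrix product. To repair your induction you would have to replace the associativity step by a direct, entrywise choice of $w$ solving the two coupled triangular conditions $0\in a_{11}w_j+(vB')_j$ and $0\in cv_j+(wA')_j$ simultaneously --- which is essentially the paper's computation in disguise.
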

\begin{proof}
    Let
    $$A=
    \begin{pmatrix}a_{11} & a_{12} & \ldots & a_{1n} \\
     0 & a_{22} & \ldots & a_{2n} \\
     \vdots & \vdots & \ddots & \vdots \\
     0 & 0 & \ldots & a_{nn}
     \end{pmatrix}
    $$
    We already know that $\det(A)=a_{11}a_{22}...a_{nn}$. Then, we need to prove that $A$ is invertible iff $a_{ii}\ne0$ for all $i=1,2,...,n$.
    
    Now, let $B\in M_n(F)$ be another upper triangular matrix, saying
    $$B=
    \begin{pmatrix}b_{11} & b_{12} & \ldots & b_{1n} \\
     0 & b_{22} & \ldots & b_{2n} \\
     \vdots & \vdots & \ddots & \vdots \\
     0 & 0 & \ldots & b_{nn}
     \end{pmatrix}$$
     We also know that
     $$AB=
     \begin{pmatrix}a_{11}b_{11} & a_{11}b_{12}+a_{12}b_{22} & a_{11}b_{13}+a_{12}b_{23}+a_{13}b_{33} & \ldots & \sum^n_{k=1}a_{1k}b_{kn} \\
     0 & a_{22}b_{22} & a_{22}b_{23}+a_{23}b_{33}& \ldots & \sum^n_{k=2}a_{2k}b_{kn} \\
     \vdots & \vdots & \vdots & \ddots & \vdots \\
     0 & 0 & 0 & \ldots & a_{nn}b_{nn}
     \end{pmatrix}$$
     and
     $$BA=
     \begin{pmatrix}b_{11}a_{11} & b_{11}a_{12}+b_{12}a_{22} & b_{11}a_{13}+b_{12}a_{23}+b_{13}a_{33} & \ldots & \sum^n_{k=1}b_{1k}a_{kn} \\
     0 & b_{22}a_{22} & b_{22}a_{23}+b_{23}a_{33}& \ldots & \sum^n_{k=2}b_{2k}a_{kn} \\
     \vdots & \vdots & \vdots & \ddots & \vdots \\
     0 & 0 & 0 & \ldots & b_{nn}a_{nn}
     \end{pmatrix}$$
     Then, if $a_{ii}=0$ for some $i\in\{1,2,...,n\}$, we have $I_n\notin AB\cap BA$ for all $B\in M_n(F)$. This is enough to prove that $A$ cannot be invertible.

     Now, suppose $a_{ii}\ne0$ for all $i=1,2,...,n$. We will choose the elements $b_{ij}$ in order to get 
     $$I_n\in AB\cap BA.$$
     First, choose $b_{ii}=a_{ii}^{-1}$. Then, considering $AB=(P_{ij})$ and $BA=(Q_{ij})$, we want to get $0\in P_{ij}$ and $0\in Q_{ij}$ for all $i\ne j$. We need to choose $b_{(n-1)n}$ in order to get
     \begin{align*}
         0&\in a_{(n-1)(n-1)}b_{(n-1)n}+a_{(n-1)n}b_{nn}\mbox{ and }\\
         0&\in b_{(n-1)(n-1)}a_{(n-1)n}+b_{(n-1)n}a_{nn}.
     \end{align*}
     Then (remember that $b_{ii}=a_{ii}^{-1}$) we need
     $$b_{(n-1)n}\in -a_{(n-1)n}a_{(n-1)(n-1)}^{-1}a_{nn}^{-1}.$$
     Then we choose $b_{nn},b_{(n-1)(n-1)}$ and $b_{(n-1)n}$ (i,e, we complete the process for the $n$-th and $(n-1)$-th rows of $B$).

     Now, we need to choose $b_{(n-2)(n-1)}$ and $b_{(n-2)n}$ in order to get
     \begin{align*}
         0&\in a_{(n-2)(n-2)}b_{(n-2)(n-1)}+a_{(n-2)(n-1)}b_{(n-1)(n-1)}+a_{(n-2)n}b_{n(n-1)}\mbox{ and }\\
         0&\in a_{(n-2)(n-2)}b_{(n-2)n}+a_{(n-2)(n-1)}b_{(n-1)n} +a_{(n-2)n}b_{nn}
     \end{align*}
     and
     \begin{align*}
         0&\in b_{(n-2)(n-2)}a_{(n-2)(n-1)}+b_{(n-2)(n-1)}a_{(n-1)(n-1)}+b_{(n-2)n}a_{n(n-1)}\mbox{ and }\\
         0&\in b_{(n-2)(n-2)}a_{(n-2)n}+b_{(n-2)(n-1)}a_{(n-1)n} +b_{(n-2)n}a_{nn}
     \end{align*}
     Picking $b_{(n-2)(n-1)}$ and $b_{(n-2)n}$ such that
     \begin{align*}
         b_{(n-2)n}&\in-a_{(n-2)(n-2)}^{-1}a_{(n-2)(n-1)}b_{(n-1)n} -a_{(n-2)(n-2)}^{-1}a_{(n-2)n}b_{nn}\mbox{ and }\\
         b_{(n-2)(n-1)}&\in-a_{(n-2)(n-2)}^{-1}a_{(n-2)(n-1)}b_{(n-1)(n-1)}-a_{(n-2)(n-2)}^{-1}a_{(n-2)n}b_{n(n-1)}
     \end{align*}
     we complete the process for the $n$-th, $(n-1)$-th and $(n-2)$-th rows of $B$. Repeating this process more $n-3$ times we arrive at a matrix $B$ such that $I_n\in AB\cap BA$, as desired.
\end{proof}

% \begin{lem}\label{scal5}
% For a system system $Ax\subseteq B$ over a superfield, if $0\notin\det(A^{scaled})$ then $A^{scaled}$ is soluble.
% \end{lem}
% \begin{proof}
% Just use the argument in the recursive Definition \ref{rec-sol} and induction.
% \end{proof}

% \kmar{reescrever}
% \begin{lem}
% Let $Ax\subseteq B$ be a $m\times n$ system and $D\in M_{n\times m}(F)$. Then $Ax\subseteq A(DB)$ admits a weak solution.
% \end{lem}
% \begin{proof}
% If we find $d\in M_{n\times 1}(F)$ with $d\subseteq DB$ then $Ad\subseteq A(DB)$. But $DB\subseteq M_{n\times 1}(F)$, which complete the proof.
% \end{proof}

\section{Applications to the theory of algebraic extensions of superfields}

We have some possibilities to consider in order to define the notion of extension for superfields:

 \begin{defn}[Extensions]\label{extension}
Let $F$ and $K$ be superfields.
\begin{enumerate}[i -]
    \item We say that $K$ is a \textbf{proto superfield extension (or just a proto extension)} of $F$, notation $K|_pF$, if $F\subseteq K$.
    \item We say that $K$ is a \textbf{superfield extension (or just an extension)} of $F$, notation $K|F$ if $F\subseteq K$ and the inclusion map $F\hookrightarrow K$ is a superfield morphism.
    \item We say that $K$ is a \textbf{full superfield extension (or just a full extension)} of $F$, notation $K|_fF$ if $F\subseteq K$ and the inclusion map $F\hookrightarrow K$ is a full superfield morphism.
\end{enumerate}
\end{defn}

\begin{ex}
$ $
\begin{enumerate}[i -]
    \item Of course, all full extension is an extension and all extension is a proto extension.
    
    \item We have $K\subseteq Q_2$ but the inclusion map $K\hookrightarrow Q_2$ is not a morphism. Then we have a proto extension $Q_2|_pK$ that is not an extension.
    
    \item For $p,q$ prime integers with $q\ge p$ we have an inclusion morphism $H_p\hookrightarrow H_q$, but this morphism is not full. Then we have an extension $H_q|H_p$ that is not a full extension.
    
    \item Let $F$ be a superfield, $p\in F[X]$ an irreducible polynomial and $F(p)=F[X]/\langle p\rangle$. Then we have a full morphism $F\hookrightarrow F(p)$ so we have a full extension $F(p)|_fF$.
    
    \item Let $F,K$ be fields such that $F\subseteq K$. Then the field extension $K|F$ satisfy all conditions in Definition \ref{extension}.
\end{enumerate}
\end{ex}

The result below justify a deeper look at full superfield extensions.

\begin{teo}\label{unicityext}
Let $K_1|_fF$ and $K_2|_fF$ be full superfield extensions and suppose that $\gamma\in K_1\cap K_2$. Then
$$F[\gamma,K_1]=F[\gamma,K_2].$$
\end{teo}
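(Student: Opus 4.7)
The plan is to reduce the claim to an identity of polynomial evaluations, then invoke Lemma \ref{factstrong2}(d) twice in a common ambient superfield.

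First I would unpack the notation
\[
F[\gamma, K_j] \;=\; \bigcup_{f(X) \in F[X]} f(\gamma, K_j),
\]
so that it suffices to establish $f(\gamma, K_1) = f(\gamma, K_2)$ for every polynomial $f(X) = a_0 + a_1 X + \cdots + a_n X^n \in F[X]$. This reduction is routine once one remembers that by definition $f(\gamma, K_j)$ is exactly the set of $c \in K_j$ with $c \in a_0 + a_1\gamma + \cdots + a_n\gamma^n$ computed in $K_j$.

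The crucial step is to set up a common ambient. Interpreting the hypothesis $\gamma \in K_1 \cap K_2$ as asserting that both $K_1$ and $K_2$ sit inside a common superfield $L$ with full inclusions $\iota_j \colon K_j \hookrightarrow L$, I would apply Lemma \ref{factstrong2}(d) to each $\iota_j$, taking the coefficients $a_0, \ldots, a_n \in F \subseteq K_j$ and the argument $\alpha = \gamma \in K_j$. Since $\iota_j$ is an inclusion, this yields
\[
\{c \in K_j : c \in a_0 + a_1 \gamma + \cdots + a_n \gamma^n \text{ in } K_j\} \;=\; \{c \in L : c \in a_0 + a_1 \gamma + \cdots + a_n \gamma^n \text{ in } L\},
\]
that is, $f(\gamma, K_j) = f(\gamma, L)$. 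The right-hand side is independent of $j$, so $f(\gamma, K_1) = f(\gamma, L) = f(\gamma, K_2)$; taking the union over all $f \in F[X]$ yields the theorem. Note that fullness enters in an essential way: without it one obtains only the inclusion $\iota_j(a_0 + \cdots + a_n\gamma^n) \subseteq \iota_j(a_0) + \cdots + \iota_j(a_n)\iota_j(\gamma)^n$ from the weak distributive law, which is not enough.

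The step I expect to be the main obstacle is the foundational one: justifying the existence of a common ambient $L$ containing both $K_j$ as full sub-superfields. This is implicit in writing $\gamma \in K_1 \cap K_2$ for two superfields-with-operations, but it is not made explicit in the hypothesis, and a fully rigorous treatment may require either an additional clause in the definition of extension or a small amalgamation construction in the category of full superfield extensions of $F$ (so that $K_1 \cup_F K_2$ embeds fully into some $L$). Once this is in place, Lemma \ref{factstrong2}(d) does essentially all the work and the proof collapses to a direct application.
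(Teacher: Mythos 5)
The paper states Theorem \ref{unicityext} with no proof at all, so there is nothing of the authors' to compare your argument against; I can only assess it on its own terms. Conditional on the existence of a common ambient superfield $L$ into which both $K_1$ and $K_2$ embed fully, your argument is correct and efficient: Lemma \ref{factstrong2}(d) applied to the inclusion $\iota_j\colon K_j\hookrightarrow L$ shows that the subset $a_0+a_1\gamma+\cdots+a_n\gamma^n$ computed in $K_j$ coincides with the one computed in $L$, hence $ev(f,\gamma,K_1)=ev(f,\gamma,L)=ev(f,\gamma,K_2)$ for every $f\in F[X]$, and taking unions over $f$ gives the claim.

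However, the obstacle you flag in your last paragraph is not a foundational technicality that can be discharged later; it is the entire content of the statement. The hypothesis ``$\gamma\in K_1\cap K_2$'' is purely set-theoretic and imposes no compatibility between the multioperations of $K_1$ and $K_2$ outside of $F$: the fullness of each inclusion $F\hookrightarrow K_j$ only controls sums and products of elements of $F$, while the evaluation $a_0+a_1\gamma+\cdots+a_n\gamma^n$ involves products with $\gamma\notin F$, so already $\gamma\cdot\gamma$ may be entirely different subsets of $K_1$ and of $K_2$. Moreover $F[\gamma,K_j]$ generally contains elements of $K_j\setminus(F\cup\{\gamma\})$ (for instance $F[\overline X,F(p)]=F(p)$ by Lemma \ref{almostfact}), so taking $K_2$ to be an isomorphic copy of $K_1$ in which every element outside $F\cup\{\gamma\}$ is renamed preserves all hypotheses while making $F[\gamma,K_1]\neq F[\gamma,K_2]$ as sets. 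Thus the theorem is not provable as literally stated: a correct proof must either add the hypothesis that $K_1$ and $K_2$ are full sub-extensions of a common $L|_fF$ (under which your argument is complete), or supply an amalgamation construction for full superfield extensions, which is nowhere available in the paper. You should state this missing hypothesis explicitly rather than defer it.
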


\begin{defn}[Algebraic Extensions]
We say that a proto extension $K|_pF$ is \textbf{algebraic} if all element $\alpha\in K$ is $K$-algebraic over $F$. We denote the same for extensions and full extensions.
\end{defn}

\begin{defn}[Linear Independency, Basis, Degree]
Let $K|_pF$ be a proto extension and $I\subseteq K$. We say that $I$ is \textbf{$F$-linearly independent} if for all 
distinct $\lambda_1,...,\lambda_n\in I$, $n\in\mathbb N$, the following hold:
$$\mbox{If }0\in \left(\sum^{r_1}_{j=1}a_{j1}\right)\lambda_1+...+ \left(\sum^{r_1}_{j=1}a_{n1}\right)\lambda_n \mbox{ then }0\in\left(\sum^{r_1}_{j=1}a_{ij}\right)\mbox{ for all }j.$$
and $I$ is \textbf{$F$-linearly dependent} if it is not $F$-linearly independent. We say that $I$ is a \textbf{$F$-basis} of 
$K$ if $I$ is linearly independent and $K$ is \textbf{generated by $I$}, i.e,
$$K=\bigcup_{n\ge0}\left\lbrace\sum^n_{i=0}a_i\lambda_i:a_i\in F,\,\lambda_i\in I\right\rbrace.$$
In this case, we write $K=F[I]$. We define the \textbf{degree} of $K|_pF$, notation $[K:F]$, by 
the following
$$[K:F]:=\infty\mbox{ or }[K:F]:=\max\{n:\mbox{the set }\{1,\lambda,\lambda^2,...,\lambda^n\}\mbox{ is linearly independent for all }\lambda\in K\}.$$
\end{defn}

\begin{rem}\label{rem1}
There are these immediate consequences of the above definitions:
\begin{enumerate}[a -]
\item Here the Definitions of linear independence, generators and dimension are ad hoc (in the sense the until now, we do not know if there are some ''theory of linear algebra'' available for superfields).
 \item If $I\subseteq K$ is linearly independent and $J\subseteq I$ then $J$ is also linearly 
independent.
 \item An element $\alpha\in K$ is $F$-algebraic if and only if $\{\alpha^k:k\in\mathbb N\}$ is $F$-linearly dependent.
 \item If $[K:F]<\infty$ then all $\alpha\in K$ is $F$-algebraic.
 \item Let $F$ be a superfield and $p\in F[X]$ an irreducible polynomial, say $p(X)=a_0+a_1X+...+a_nX^{n-1}+X^n$. Then $\{\overline1,\overline X,...,\overline X^{n-1}\}$ is a $F$-basis of $F(p)$.
\end{enumerate}
\end{rem}

Now, let $K|_pF$ be a proto extension and $\gamma\in K$ algebraic. Then there exist an irreducible polynomial $f(X)$ such that $0\in f(\gamma,K)$. Let $\mbox{Irr}_F(\gamma,K)$ be the minimum degree irreducible polynomial $f(X)$ 
such that $0\in f(\gamma,K)$. Let $F[\gamma,K]\subseteq K$ be the set
$$F[\gamma,K]:=\bigcup_{f\in F[X]}ev(f,\gamma,K)\subseteq K,$$
and $I_{\gamma,K}\subseteq F[\gamma,K]$ the set
$$I_{\gamma,K}:=\bigcup_{f\in \langle\mbox{Irr}_F(\gamma,K)\rangle}ev(f,\gamma,K)\subseteq K.$$
Note that for all $g\in F[X]$ and all $a_0,...,a_n\in F$, applying the ``Newton's binom formula'' we get
$$ev(g,(a_0+a_1\gamma+a_2\gamma^2+...+a_{n-1}\gamma^{n-1}+a_n\gamma^n),K)\subseteq F[\gamma,K].$$

\begin{rem}
 $ $
 \begin{enumerate}[i -]
    \item If $K|F$ is a field extension then our $F[\gamma,K]$ coincide with the usual simple extension $F(\gamma)$.
    
    \item If $K|F$ is a superfield extension and $\gamma\in K$, then $F[\gamma,K]$ \textbf{depends on the choice of $K$}. For example, consider $H_3|H_1$ and $H_5|H_1$ and the element $2\in H_3$ (and of course, in $H_5$). Then
    \begin{align*}
      H_2[2,H_3]&=\bigcup_{f\in H_2[X]}ev(f,\gamma,H_3)=H_3, \\
      H_2[2,H_5]&=\bigcup_{f\in H_2[X]}ev(f,\gamma,H_5)=H_5,
    \end{align*}
    and then, $H_2[2,H_3]\ne H_2[2,H_5]$.
  
  \item For a proto extension $K|_pF$ the set $F[\gamma,K]$ may not be a superfield! Let $F=H_2$, $K=\mathbb R$ and $\gamma=2$. Then
  $$H_2[2,\mathbb R]=2\mathbb Z$$
  which is not a superfield.
 \end{enumerate}
\end{rem}

At this point, our goal is to obtain an appropriate notion for simple extensions of superfields. In other words, given a full extension $K|_fF$ and $\alpha\in K$ algebraic, it is highly desirable to obtain a superfield $F(\alpha)$ that:
\begin{enumerate}
    \item $F\cup\{\alpha\}\subseteq F(\alpha)$;
    \item $F(\alpha)$ is the minimal superfield (with respect to inclusion) satisfying (1);
    \item $F(\alpha)$ is "computable" in some way (or saying it in a more realistic manner, we want that $F(\alpha)\cong F(p)$ with $p(X)=\mbox{Irr}_F(\alpha)$)\footnote{As we will see later, simple calculations with superfield are highly demanding...}.
\end{enumerate}

For general superfields there are some obstacles to achieve this goal. The very first one is the fact that $R[X]$ is not full in general. However, we have an interesting property valid for all $a,b\in R[X]$:
$$a(1+X)=a+aX\mbox{ and }(a+b)X=aX+bX.$$
This property is the inspiration for the following definition.

\begin{defn}\label{almostfull}
Let $K|_pF$ be a proto superfield extension and $\gamma\in K$. Suppose that $K$ is $F$-generated by $\{1,\gamma^2,...,\gamma^n\}$. We say that $K$ is \textbf{$F$-almost full relative to $\gamma$ (or just almost full)} if for all $a,b,c\in F$, and all $p,q,r\in\mathbb N$ distinct
$$(a\gamma^p+b\gamma^q+c\gamma^r)\gamma=a\gamma^{p+1}+b\gamma^{q+1}+c\gamma^{r+1}.$$
\end{defn}

Here are some immediate consequences of Definition \ref{almostfull}:

\begin{lem}\label{lemfator3}
 Let $K|_fF$ be a full extension $F$-almost full relative to $\gamma$ and let 
 $$A=a_0+a_1\gamma+a_2^2+...+a_n\gamma^n.$$
 Then:
 \begin{enumerate}[i -]
     \item For all $b,c\in F$, $(b+c\gamma)A=bA+c\gamma A$.
     \item For all $b_0,....,b_m\in F$,
     \begin{align*}
      (b_0+b_1\gamma+...+b_j\gamma^j+b_{j+1}\gamma^{j+1}+...+b_m\gamma^m)A=\\
     (b_0+b_1\gamma+...+b_j\gamma^j)A+(b_{j+1}\gamma^{j+1}+...+b_m\gamma^m)A.
     \end{align*}
     In particular, if $d\in F$, $B\subseteq K$ with $B=b_0+b_1\gamma+b_2\gamma^2+...+b_m\gamma^m$ and $r>m$, then
     $$(B+d\gamma^r)A=AB+d\gamma^rA.$$
 \end{enumerate}
\end{lem}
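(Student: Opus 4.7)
The plan is to prove (i) by showing both set inclusions explicitly, and then derive (ii) by induction using (i). The easy inclusion $(b+c\gamma)A \subseteq bA + c\gamma A$ follows from the weak distributive law alone: for any $t \in b+c\gamma$ and $v \in A$, we have $tv \subseteq (b+c\gamma)v \subseteq bv + c\gamma v \subseteq bA + c\gamma A$, and taking unions over $t$ and $v$ yields the containment.

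The substantive step is the reverse inclusion $bA + c\gamma A \subseteq (b+c\gamma)A$. My strategy is to compute an explicit monomial expansion for each side and check they coincide. First, a short auxiliary induction extends the almost-full property from three summands to arbitrarily many distinct powers of $\gamma$, yielding the identity
$$\gamma(a_0 + a_1\gamma + \cdots + a_n\gamma^n) = a_0\gamma + a_1\gamma^2 + \cdots + a_n\gamma^{n+1}$$
as an equality of subsets of $K$. Using the fullness of the embedding $F \hookrightarrow K$, the $F$-scalars $b$ and $c$ may then be pushed through these sums. Applying both reductions to $bA + c\gamma A$ and independently to $(b+c\gamma)A$ gives the common expression
$$ba_0 + (ba_1 + ca_0)\gamma + \cdots + (ba_n + ca_{n-1})\gamma^n + ca_n\gamma^{n+1},$$
with all coefficients computed inside $F$; a term-by-term matching closes the argument.

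Part (ii) then follows from (i) by induction on $j$: at each step one isolates one additional monomial $b_j\gamma^j$ from the higher-degree tail, which is precisely the two-term situation of (i) applied to the appropriate sub-expression (after using the extended almost-full identity to realign $\gamma$-powers). The particular case ``$g + d\gamma^r$ with $r > m$'' is immediate from the general splitting, since the hypothesis $r > m$ ensures that the added monomial does not overlap with any exponent in $g$.

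The main obstacle I expect is the auxiliary induction upgrading the almost-full property from three summands to $n+1$. Definition \ref{almostfull} handles only three distinct monomials at a time, and the weak distributive law alone supplies only one inclusion; to get the reverse, I would pad with zero summands of the form $0 \cdot \gamma^r$ (which do not alter the multi-sum by the multigroup axioms) to reduce larger sums to the three-monomial format, and then iterate. Once this extension is secured together with the commutation of $F$-scalars through $\gamma$-sums coming from the full embedding, the remainder is a careful but mechanical bookkeeping of coefficients.
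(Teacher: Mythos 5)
The paper offers no proof of this lemma (it is announced as an ``immediate consequence'' of Definition \ref{almostfull}), so your argument has to stand on its own, and as written it has two genuine gaps. The first is the step ``using the fullness of the embedding $F\hookrightarrow K$, the $F$-scalars $b$ and $c$ may be pushed through these sums.'' A full embedding only guarantees that the sum and product of two elements \emph{of $F$}, computed in $K$, coincide with those computed in $F$; it says nothing about $c(x_0+\dots+x_n)=cx_0+\dots+cx_n$ when the $x_i$ are elements of $K$ (here they lie in the monomial sets $a_i\gamma^i$, which involve $\gamma\notin F$). That identity would require $K$ itself to be a full superring --- precisely what the ``almost full'' hypothesis is designed to avoid assuming. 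Without it you cannot expand $bA$ or $c\gamma A$ into monomials, nor recombine the cross terms $ba_1\gamma + ca_0\gamma$ into $(ba_1+ca_0)\gamma$ at the end, so the ``common expression'' on which the whole matching argument rests is not justified.

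The second gap is the auxiliary induction you flag as the main obstacle: your proposed fix does not close it. Padding with $0\cdot\gamma^r$ does reduce sums of at most three monomials to the three-monomial format of Definition \ref{almostfull}, but for $n+1\ge 4$ summands the iteration requires passing from $y\gamma+z\gamma$ back to $(y+z)\gamma$, where $y$ and $z$ are arbitrary \emph{elements} of partial sums rather than monomials $a\gamma^p$; this is the reverse of the weak distributive law and is exactly the inclusion a general superfield does not provide. Hence the identity $\gamma A=a_0\gamma+\dots+a_n\gamma^{n+1}$ for general $n$ does not follow from the stated hypothesis by padding and iterating. A further point your expansion strategy overlooks: $bA+c\gamma A$ is a sum of sets, so it contains $u+w$ with $u\in bv_1$, $w\in c\gamma v_2$ for possibly \emph{different} $v_1,v_2\in A$, whereas $(b+c\gamma)A$ only ranges over a single $v\in A$ at a time; even a correct termwise expansion of $(b+c\gamma)v$ would therefore not yield the inclusion $bA+c\gamma A\subseteq (b+c\gamma)A$ without an additional argument. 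Any honest proof of this lemma has to confront these recombination steps directly (as the explicit convolution formula does in the polynomial analogue, Lemma \ref{lemfator}), rather than route them through fullness of the embedding.
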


\begin{lem}\label{almostfact}
 Let $K|_fF$ be a full extension $F$-almost full relative to $\gamma$. Then:
 \begin{enumerate}[i -]
     \item $K=F[\gamma,K]$;
     \item If $K|_fF$ and $L|_fK$ are almost full then $L|F$ is almost full;
     \item If $L|_fF$ is another full extension and $\pi:K\rightarrow L$ is a full surjective morphism, then $L|_fF$ is $F$-almost full relative to $\pi(\gamma)$;
     \item For all $a_0,...,a_n,b_0,...,b_n\in F$,
     \begin{align*}
         &(a_0+a_1\gamma+a_2\gamma^2+...+a_{n-1}\gamma^{n-1}+a_n\gamma^n)
         (b_0+b_1\gamma+b_2\gamma^2+...+b_{n-1}\gamma^{n-1}+b_n\gamma^n)\subseteq\\
         &a_0b_0+\left(\sum^1_{j=0}a_jb_{1-j}\right)\gamma+...+\left(\sum^{2n-1}_{j=0}a_jb_{(2n-1)-j}\right)\gamma^{2n-1}+\left(\sum^{2n}_{j=0}a_jb_{1-j}\right)\gamma^{2n}
     \end{align*}
     with the convention $a_j=b_j=0$ if $j>n$.
 \end{enumerate}
\end{lem}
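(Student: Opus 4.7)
The plan is to handle the four parts sequentially, with (i)--(iii) being direct consequences of the definition of almost fullness combined with the facts about full morphisms (Lemma \ref{factstrong2}) and the splitting result of Lemma \ref{lemfator3}, while (iv) reduces to careful bookkeeping of a Cauchy-type product formula.

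For (i), the very definition of ``$K|_fF$ is almost full relative to $\gamma$'' already includes the hypothesis that $K$ is $F$-generated by $\{1,\gamma,\ldots,\gamma^n\}$. Hence every $\alpha \in K$ lies in some set $a_0+a_1\gamma+\ldots+a_n\gamma^n = ev(f,\gamma,K)$ with $f \in F[X]$, giving $\alpha \in F[\gamma,K]$; the reverse inclusion is automatic. For (ii), interpreting the hypothesis as asserting that both extensions are almost full relative to the same $\gamma$, the defining equation for $L|_fK$ specializes at once to $F$-scalars (as $F \subseteq K$), and the generation of $L$ over $F$ by powers of $\gamma$ follows by chaining the two generation statements and applying Lemma \ref{lemfator3}(ii) to split the resulting sums. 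For (iii), apply the full surjective morphism $\pi$ to both sides of the defining equation in $K$; fullness preserves $+$ and $\cdot$ and $\pi$ fixes $F$, so the equation transports verbatim to $L$ with $\pi(\gamma)$ in place of $\gamma$. Surjectivity of $\pi$ combined with Lemma \ref{factstrong2}(d) gives that every element of $L$ is of the form $a_0 + a_1\pi(\gamma) + \ldots + a_n\pi(\gamma)^n$ with $a_i\in F$, as required.

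For (iv), write $A = a_0+a_1\gamma+\ldots+a_n\gamma^n$ and $B = b_0+b_1\gamma+\ldots+b_n\gamma^n$. Apply Lemma \ref{lemfator3}(ii) inductively to split
$$BA = b_0 A + b_1\gamma A + \ldots + b_n\gamma^n A.$$
Using fullness of $K$, distribute each $b_k\gamma^k$ across $A$ to obtain $b_k\gamma^k A = \sum_{i=0}^n (b_k\gamma^k)(a_i\gamma^i)$, and simplify each factor to $a_ib_k\gamma^{i+k}$ via commutativity and associativity in the multimonoid of $K$. Then regroup all resulting summands by the exponent $m = i+k$ (running $0,\ldots,2n$) and factor out $\gamma^m$ using fullness, yielding the claimed inclusion
$$AB \subseteq \sum_{m=0}^{2n}\left(\sum_{j=0}^{m} a_j b_{m-j}\right)\gamma^m,$$
with the convention that $a_j = b_j = 0$ for $j > n$.

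The main obstacle lies in (iv): each rewriting step must be tracked as a subset inclusion rather than an equality, the reordering of finitely many summands must be justified through commutativity and associativity in the multigroup of $K$, and the final collection of like powers of $\gamma$ requires the almost-full property to license identifications of the form $(\alpha+\beta)\gamma^m = \alpha\gamma^m + \beta\gamma^m$ in the coefficients that appear. Once these moves are set up carefully, the computation mirrors the classical Cauchy product for polynomials.
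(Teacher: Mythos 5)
The paper states Lemma \ref{almostfact} without any proof, so there is nothing to compare your argument against; I can only assess it on its own terms. Parts (i)--(iii) are essentially fine at the paper's level of rigor (your reading of (ii) as ``relative to the same $\gamma$'' is one reasonable disambiguation of an ambiguous statement, and (iii) implicitly uses that $\pi$ restricts to the identity on $F$, which is surely intended).

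Part (iv), however, has a genuine gap at the final step, and you have in fact put your finger on it without resolving it. The splitting $BA=b_0A+b_1\gamma A+\dots+b_n\gamma^n A$ via Lemma \ref{lemfator3}(ii) is fine, and $b_k\gamma^k A\subseteq\sum_i a_ib_k\gamma^{i+k}$ follows from the weak distributive law (you do not need, and do not have, ``fullness of $K$'' here: $K|_fF$ only says the \emph{inclusion} $F\hookrightarrow K$ is a full morphism, i.e.\ it controls sums and products of elements of $F$; it does not make $K$ a full superring). Reordering the $(n+1)^2$ summands by $m=i+k$ is justified by Lemma \ref{lembasic1}(a). But the last move --- collecting like powers, i.e.\ $\sum_{i+k=m}a_ib_k\gamma^m\subseteq\bigl(\sum_{i+k=m}a_ib_k\bigr)\gamma^m$ --- is the \emph{reverse} of the weak distributive law ($(u+v)\gamma^m\subseteq u\gamma^m+v\gamma^m$ is what the axioms give, not the containment you need). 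You claim this identification ``$(\alpha+\beta)\gamma^m=\alpha\gamma^m+\beta\gamma^m$'' is licensed by the almost-full property, but Definition \ref{almostfull} only asserts $(a\gamma^p+b\gamma^q+c\gamma^r)\gamma=a\gamma^{p+1}+b\gamma^{q+1}+c\gamma^{r+1}$ for \emph{pairwise distinct} exponents $p,q,r$; it never produces an identity in which two monomials share the same exponent, which is exactly the case $m=1,\dots,2n-1$ requires (e.g.\ $a_0b_1\gamma+a_1b_0\gamma$ versus $(a_0b_1+a_1b_0)\gamma$). As written, your proof of (iv) therefore does not close; you would need either an additional hypothesis of the form $u\gamma^m+v\gamma^m\subseteq(u+v)\gamma^m$ (true in the motivating example $R[X]$, where $(a+b)X=aX+bX$ holds, and presumably in $F(p)$), or the containment in (iv) must be weakened/reversed. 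Since the authors supply no proof, it is possible the statement itself needs this extra hypothesis; but your argument as proposed does not establish it from what is given.
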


Let $K|_fF$ be a full extension and $\alpha\in K$ algebraic over $F$. 
Our aim is to provide an almost full algebraic extension $F(\alpha)|_fF$ containing $F$ and $\alpha$. The key to that is to find a way to describe algebraic elements of $K$. Here we have a first result in this direction.

\begin{teo}[Almost Full Newton's Binom]
Let $K|F$ be an almost full superfield extension $F$-generated by $\{1,\gamma,...,\gamma^n\}$, $\gamma\in K$. Then for all $a,b\in F$,
$$(a+b\gamma)^n=\sum^n_{j=0}\binom{n}{j}a^j(b\gamma)^{n-j}.$$
\end{teo}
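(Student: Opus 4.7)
The plan is to proceed by induction on $n$, using the almost-fullness hypothesis encoded in Lemma \ref{lemfator3} to carry out the distributive manipulations that would normally require a fully distributive superring.

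\textbf{Base cases.} For $n=0$ both sides equal $\{1\}$, and for $n=1$ both sides equal $a+b\gamma$. So the formula holds trivially.

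\textbf{Inductive step.} Assume $(a+b\gamma)^{n-1}=\sum_{j=0}^{n-1}\binom{n-1}{j}a^j(b\gamma)^{n-1-j}$, and denote this set by $P_{n-1}$. Regroup $P_{n-1}$ by powers of $\gamma$, writing
\[
P_{n-1}=c_0+c_1\gamma+\cdots+c_{n-1}\gamma^{n-1},\qquad c_k=\binom{n-1}{n-1-k}\,a^{n-1-k}b^k.
\]
Because $(a+b\gamma)^n = (a+b\gamma)\cdot(a+b\gamma)^{n-1} = (a+b\gamma)\cdot P_{n-1}$, I apply Lemma \ref{lemfator3}(i) with this $A:=P_{n-1}$ to get the equality $(a+b\gamma)P_{n-1}=aP_{n-1}+b\gamma P_{n-1}$. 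Then Lemma \ref{lemfator3}(ii), iterated (or its ``splitting'' corollary), turns each of $aP_{n-1}$ and $b\gamma P_{n-1}$ into the sums of monomial terms:
\[
aP_{n-1}=ac_0+ac_1\gamma+\cdots+ac_{n-1}\gamma^{n-1},\qquad b\gamma P_{n-1}=bc_0\gamma+bc_1\gamma^2+\cdots+bc_{n-1}\gamma^n.
\]
Adding these and collecting the coefficient of each $\gamma^k$ gives
\[
(a+b\gamma)^n = ac_0 + (ac_1+bc_0)\gamma + \cdots + (ac_{n-1}+bc_{n-2})\gamma^{n-1} + bc_{n-1}\gamma^n.
\]

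\textbf{Coefficient identification via Pascal.} The extremal coefficients are $ac_0=a^n$ and $bc_{n-1}=b^n$, matching the $j=n$ and $j=0$ terms of $P_n$ respectively. For $1\le k\le n-1$, the coefficient is
\[
ac_k+bc_{k-1} = \binom{n-1}{n-1-k}a^{n-k}b^k + \binom{n-1}{n-k}a^{n-k}b^k,
\]
which, using the multigroup identity $mX+\ell X=(m+\ell)X$ (valid in any commutative multigroup) together with Pascal's rule $\binom{n-1}{n-k-1}+\binom{n-1}{n-k}=\binom{n}{n-k}$, equals $\binom{n}{n-k}a^{n-k}b^k$. Relabelling $j:=n-k$ recovers exactly the term $\binom{n}{j}a^j(b\gamma)^{n-j}$, so $(a+b\gamma)^n=P_n$ as desired.

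\textbf{Main obstacle.} The delicate point is that $P_{n-1}$ is itself a set (because binomial multiples and power coefficients are multivalued), so when I invoke Lemma \ref{lemfator3} I must verify that its hypotheses still apply to $P_{n-1}$ viewed as a union $\bigcup\{c_0+c_1\gamma+\cdots+c_{n-1}\gamma^{n-1}\}$ over admissible tuples $(c_0,\dots,c_{n-1})$. I expect this to reduce to the same equality by taking unions over the distributions, but one has to be careful that every inclusion used is in fact an equality; the almost-full hypothesis is what guarantees this, whereas a purely weak/semi-distributive setting would only give containment and would force us to settle for the one-sided $(a+b\gamma)^n\subseteq P_n$ already implicit in Lemma \ref{lembasic1}(e).
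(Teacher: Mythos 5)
Your proof is correct and follows essentially the same route as the paper's: induction on $n$, with the distributive manipulations justified by Lemma \ref{lemfator3} and the coefficients collected via Pascal's rule. The paper only writes out the case $n=2$ and asserts the rest follows ``by induction''; your explicit inductive step --- including the caveat that the coefficients $c_k$ of $(a+b\gamma)^{n-1}$ are set-valued, so Lemma \ref{lemfator3} must be extended by taking unions over admissible tuples --- supplies precisely what that assertion leaves implicit.
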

\begin{proof}
By induction is enough to prove the case $n=2$. We have
  \begin{align*}
    (a+b\gamma)^2&:=(a+b\gamma)(a+b\gamma)\stackrel{\ref{lemfator3}}{=} a(a+b\gamma)+b\gamma(a+b\gamma)= a^2+ab\gamma+b\gamma a+(b\gamma)^2 \\
    &=a^2+ab\gamma+ab\gamma+(b\gamma)^2=a^2+2ab\gamma+(b\gamma)^2:=\sum^2_{j=0}\binom{n}{j}a^j(b\gamma)^{n-j}.
  \end{align*}
\end{proof}

As an application of the theory of linear systems over superfields, we present in the sequel a key result on the theory of algebraic extensions of superfields, which states that our "best candidate for simple extension", $F(p)$, is an full algebraic and almost full extension of $F$. We start proving that $F(p)$ is proto-full.

\begin{teo}\label{teohell2}
Let $F$ be a superfield and $p\in F[X]$ be an irreducible polynomial. Then $F(p)|_fF$ is an algebraic extension. Moreover, if $\deg p=n+1$ then $[F(p):F]\le n+1$.
\end{teo}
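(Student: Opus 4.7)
My plan is to split the proof into (i) verifying that the natural map $F \hookrightarrow F(p)$ is a \emph{full} embedding (hence $F(p)|_f F$ is a full extension), and (ii) showing that every $\mu \in F(p)$ is $F$-algebraic of degree at most $n+1$, which yields algebraicity and the degree bound simultaneously.

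For (i), I factor the map through $F[X]$: the inclusion $F \hookrightarrow F[X]$ is a full embedding by Lemma \ref{lemperm}(g), while the quotient map $\pi : F[X] \twoheadrightarrow F[X]/\langle p\rangle$ is a full surjection by the very definition of the quotient operations together with Lemma \ref{lem1}(ii); their composition is thus a full morphism. Injectivity follows from Lemma \ref{degreelemma}(ii), since any nonzero element of $\langle p\rangle$ has degree $\ge n+1$, whereas the difference of two constants has degree $\le 0$.

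For (ii), I use Euclidean division (Theorem \ref{euclid}) to write any $\mu \in F(p)$ as $\mu = \overline{r}$ with $\deg r \le n$. I then inductively choose representatives $\rho_k \in r \cdot \rho_{k-1}$ (with $\rho_0 = 1$), so that $\rho_k \in r^k$ in $F[X]$. Applying Euclid once more to each $\rho_k$, I write $\rho_k \in q_k p + s_k$ with $s_k = \sum_{i=0}^{n} s_{k,i} X^i$. Projecting via $\pi$ (a full morphism, so $\pi(r^k) = \mu^k$), I obtain $\overline{s_k} = \overline{\rho_k} \in \mu^k$ and, inside $F(p)$, the element $\overline{s_k}$ lies in the multivalued combination $\overline{s_{k,0}} + \overline{s_{k,1}}\lambda + \cdots + \overline{s_{k,n}}\lambda^n$, where $\lambda = \overline{X}$. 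The resulting matrix $(s_{k,i})_{i,k}$ has $n+1$ rows and $n+2$ columns, giving a homogeneous linear system over $F$ with more unknowns than equations; invoking the linear-systems machinery of Section 3 produces a nontrivial tuple $(c_0, \ldots, c_{n+1}) \in F^{n+2}$ with $0 \in \sum_{k=0}^{n+1} c_k\, s_{k,i}$ for each $i = 0, 1, \ldots, n$. Using the basis structure $\{1, \lambda, \ldots, \lambda^n\}$ (Remark \ref{rem1}(e)) together with the multivalued distributive law, these coordinate-wise zero-memberships can be assembled into $0 \in \sum_k c_k \overline{s_k} \subseteq \sum_k c_k \mu^k$; thus the nonzero polynomial $g(Y) = c_0 + c_1 Y + \cdots + c_{n+1} Y^{n+1}$ witnesses $\mu$ as $F$-algebraic of degree $\le n+1$, proving both algebraicity of $F(p)|_f F$ and $[F(p):F] \le n+1$.

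The main obstacle is navigating the multivalued arithmetic throughout: set-containment cannot be traded for equality, and cancellation is unavailable. Two especially delicate steps are (a) producing the nontrivial weak solution of the linear system---since Lemma \ref{element-sol} warns that solutions of a scaled system need not descend to the original, one must either work directly with the original system or track representatives carefully back through each elementary operation; and (b) assembling the coordinate-wise relations $0 \in \sum_k c_k s_{k,i}$ into a single relation $0 \in \sum_k c_k \overline{s_k}$, which essentially amounts to using linear independence of the basis to pick zero-witnesses in each coordinate and then aggregating them into a single zero-witness via the multivalued sum inside $F(p)$.
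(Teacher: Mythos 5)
There is a genuine gap at the heart of part (ii). Your matrix $(s_{k,i})$ has $n+1$ rows and $n+2$ columns, and you assert that ``the linear-systems machinery of Section 3'' produces a nontrivial tuple $(c_0,\ldots,c_{n+1})$ with $0\in\sum_k c_k s_{k,i}$ for every $i$. But the existence of a nontrivial weak solution of an underdetermined homogeneous system is precisely the property the paper isolates as being \emph{linearly closed} (Definition \ref{linearly-closed}); it is proved only for hyperfields (Theorem \ref{basis5}), and whether it holds for arbitrary --- even full --- superfields is posed as an open question. Section 3 contains no result granting such a solution for a general superfield $F$, which is the hypothesis of Theorem \ref{teohell2}. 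So the decisive step of your argument is unsupported, and the theorem you would actually obtain is the weaker statement for linearly closed $F$. Your step (b) is also more fragile than you indicate: $\overline{s_k}$ only \emph{belongs to} the multivalued combination $\sum_i \overline{s_{k,i}}\lambda^i$, and exchanging the order of the two multivalued sums $\sum_k c_k\bigl(\sum_i s_{k,i}\lambda^i\bigr)$ and $\sum_i\bigl(\sum_k c_k s_{k,i}\bigr)\lambda^i$ needs a distributive law that fails in general, so the coordinatewise memberships do not automatically aggregate.

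The paper avoids this trap by never solving an underdetermined system. It records only the powers $\mu^0,\ldots,\mu^n$ of the element $\mu=b_0+b_1\gamma+\cdots+b_n\gamma^n$ against the generators $\{1,\gamma,\ldots,\gamma^n\}$, obtaining a \emph{square} matrix $D$, which it scales to an upper triangular $D_{scaled}$ and then splits into two cases. If $D_{scaled}$ is not invertible (Lemma \ref{scal4}), some row is a linear combination of the others, which directly yields a polynomial of degree at most $n+1$ vanishing at $\mu$. If $D_{scaled}$ is invertible, one inverts it to express $1,\gamma,\ldots,\gamma^n$ inside combinations of the powers of $\mu$, and then transports the defining relation $0\in a_0+a_1\gamma+\cdots+a_{n+1}\gamma^{n+1}$ coming from $p$ itself into a polynomial relation $0\in f(\mu)$. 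In neither case is a nontrivial solution of a non-square system required. To repair your proof you would either need to restrict to linearly closed (e.g.\ hyper-) fields, or restructure it along the paper's square-matrix dichotomy.
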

\begin{proof}
Remember that $F(p)$ is generated by $\{1,\gamma,...,\gamma^n\}$ with $\gamma=\overline X$, $n\in\mathbb N$. Also, we can consider $n$ as the minimal integer such that there exist $a_0,...,a_{n+1}$ with 
$$0\in a_0+a_1\gamma+...+a_{n+1}\gamma^{n+1}.$$

Now let $b_0+b_1\gamma+...+b_n\gamma^n\in F(p)^\ast$. Since $x\cdot y\ne\emptyset$ for all $x,y\in F(P)$, for all $k=0,...,n$, there exist
$$d_{k0}+d_{k1}\gamma+...+d_{kn}\gamma^n\in(b_0+b_1\gamma+...+b_n\gamma^n)^k$$
for suitable $d_{ij}\in F$. Writing this in matrix notation, we have
\begin{align*}
  D\begin{pmatrix}
 1\\\gamma\\\vdots\\\gamma^n
 \end{pmatrix}\subseteq \begin{pmatrix}
 (b_0+b_1\gamma+...+b_n\gamma^n)^0\\(b_0+b_1\gamma+...+b_n\gamma^n)^1\\\vdots\\(b_0+b_1\gamma+...+b_n\gamma^n)^n
 \end{pmatrix}  
\end{align*}
 with
 $$D=\begin{pmatrix}d_{00} & d_{01} & \ldots & d_{1n} \\
 d_{10} & d_{11} & \ldots & d_{1n} \\
 \vdots & \vdots & \ddots & \vdots \\
 d_{n0} & d_{n1} & \ldots & d_{nn}
 \end{pmatrix}$$
 The fact that $F(p)$ is almost full enable us to scale the matrix $D$, saying
 $$D_{scaled}=
 \begin{pmatrix}e_{00} & e_{01} & \ldots & e_{1n} \\
 0 & e_{11} & \ldots & e_{1n} \\
 \vdots & \vdots & \ddots & \vdots \\
 0 & 0 & \ldots & e_{nn}
 \end{pmatrix}$$
 and getting
 \begin{align*}
   \tag{$\ast$}
   \begin{pmatrix}e_{00} & e_{01} & \ldots & e_{1n} \\
 0 & e_{11} & \ldots & e_{1n} \\
 \vdots & \vdots & \ddots & \vdots \\
 0 & 0 & \ldots & e_{nn}
 \end{pmatrix}
 \begin{pmatrix}
 1\\\gamma\\\vdots\\\gamma^n
 \end{pmatrix}\in 
 \begin{pmatrix}
 \sum^n_{j=0}g_{0j}(b_0+b_1\gamma+...+b_n\gamma^n)^j\\
 \sum^n_{j=0}g_{1j}(b_0+b_1\gamma+...+b_n\gamma^n)^j\\ \vdots\\
 \sum^n_{j=0}g_{nj}(b_0+b_1\gamma+...+b_n\gamma^n)^j
 \end{pmatrix}
 \end{align*}
 for suitable $g_{ij}\in F$.
 
 If $D_{scaled}$ is not invertible then $0\in\det(D_{scaled})=e_{11}e_{22}...e_{nn}$ and then, $e_{ii}=0$ for some $i\in\{1,...,n\}$ (see Lemma \ref{scal4}), which imply (by the very scalation process) that there exist a row $i$ with $L_i$ being a linear combination of the others. Suppose without loss of generality that 
$$L_{n+1}\cap\left[\left(\sum^{r_1}_{j=1}\lambda_{j1}\right)L_1+...+\left(\sum^{r_n}_{j=1}\lambda_{jn}\right)L_n\right]\ne\emptyset.$$
 This means
 $$0\in z_0+z_1(b_0+b_1\gamma+...+b_n\gamma^n)^1+...+z_n(b_0+b_1\gamma+...+b_n\gamma^n)^{n-1}-(b_0+b_1\gamma+...+b_n\gamma^n)^{n+1},$$
 for suitable $z_0,...,z_n\in F$, and then, for $f(X)=z_0+z_1X+...+z_nX^n-X^{n+1}$, we have 
 $$0\in f(b_0+b_1\gamma+...+b_n\gamma^n),$$
 which means $b_0+b_1\gamma+...+b_n\gamma^n$ is algebraic. If $D_{scaled}$ is invertible, since $F(p)|_fF$ is almost full we get
 $$\begin{pmatrix}
 1\\\gamma\\\vdots\\\gamma^n
 \end{pmatrix}
 \in D^{-1}_{scaled}\left[D_{scaled}\begin{pmatrix}
 1\\\gamma\\\vdots\\\gamma^n
 \end{pmatrix}\right].$$
 After multiplying the equation ($\ast$) by $D_{scaled}^{-1}$ we arrive at a system
 \begin{align*}
   \begin{pmatrix}
 1\\\gamma\\\vdots\\\gamma^n
 \end{pmatrix}
 \in D^{-1}_{scaled}\left[D_{scaled}\begin{pmatrix}
 1\\\gamma\\\vdots\\\gamma^n
 \end{pmatrix}\right]\subseteq D^{-1}_{scaled}
 \begin{pmatrix}
 \sum^n_{j=0}g_{0j}(b_0+b_1\gamma+...+b_n\gamma^n)^j\\
 \sum^n_{j=0}g_{1j}(b_0+b_1\gamma+...+b_n\gamma^n)^j\\ \vdots\\
 \sum^n_{j=0}g_{nj}(b_0+b_1\gamma+...+b_n\gamma^n)^j
 \end{pmatrix}
 \end{align*} 
 then our situation is
 \begin{align*}
   \tag{$\ast\ast$}\begin{pmatrix}
 1\\\gamma\\\vdots\\\gamma^n
 \end{pmatrix}
 \in D_{Scaled}^{-1}
 \begin{pmatrix}
 \sum^n_{j=0}g_{0j}(b_0+b_1\gamma+...+b_n\gamma^n)^j\\
 \sum^n_{j=0}g_{1j}(b_0+b_1\gamma+...+b_n\gamma^n)^j\\ \vdots\\
 \sum^n_{j=0}g_{nj}(b_0+b_1\gamma+...+b_n\gamma^n)^j
 \end{pmatrix}
 \end{align*}
 Let $D_{Scaled}^{-1}=(h_{ij})$. From ($\ast\ast$), after calculating the matrix product we get
 $$\begin{cases}
     \gamma^0\in \sum^n_{j=0}g_{0j}h_{0j}(b_0+b_1\gamma+...+b_n\gamma^n)^j\\
     \gamma_1\in \sum^n_{j=0}g_{1j}h_{1j}(b_0+b_1\gamma+...+b_n\gamma^n)^j\\
     \vdots\\
     \gamma^n\in\sum^n_{j=0}g_{nj}h_{nj}(b_0+b_1\gamma+...+b_n\gamma^n)^j
 \end{cases}$$
which imply
 $$\begin{cases}
     a_0\gamma^0\in \sum^n_{j=0}a_0g_{0j}h_{0j}(b_0+b_1\gamma+...+b_n\gamma^n)^j\\
     a_1\gamma_1\in \sum^n_{j=0}a_1g_{1j}h_{1j}(b_0+b_1\gamma+...+b_n\gamma^n)^j\\
     \vdots\\
     a_n\gamma^n\in\sum^n_{j=0}a_ng_{nj}h_{nj}(b_0+b_1\gamma+...+b_n\gamma^n)^j
 \end{cases}$$

 Then
 \begin{align*}
   a_1a_n\gamma^{n+1}&\subseteq
 \left(\sum^n_{j=0}a_1g_{1j}h_{1j}(b_0+b_1\gamma+...+b_n\gamma^n)^j\right)
 \left(\sum^n_{j=0}a_ng_{nj}h_{nj}(b_0+b_1\gamma+...+b_n\gamma^n)^j\right)
 \end{align*}
 and
 \begin{align*}
   &0\in a_0+a_1\gamma+...+a_{n+1}\gamma^{n+1}\subseteq\\
   &\sum^n_{p=0}\left[\sum^n_{j=0}a_pg_{pj}h_{pj}(b_0+b_1\gamma+...+b_n\gamma^n)^j\right]+\\
   &\left(\sum^n_{j=0}a_1g_{1j}h_{1j}(b_0+b_1\gamma+...+b_n\gamma^n)^j\right)
 \left(\sum^n_{j=0}a_ng_{nj}h_{nj}(b_0+b_1\gamma+...+b_n\gamma^n)^j\right).
 \end{align*}

 Now, thinking with polynomials, we have
 \begin{align*}
   A(X)&:=\sum^n_{p=0} \left[\sum^n_{j=0}a_pg_{pj}h_{pj}X^j\right]+\left(\sum^n_{j=0}a_1g_{1j}h_{1j}X^j\right)
 \left(\sum^n_{j=0}a_ng_{nj}h_{nj}X^j\right) =\\
&\left[\sum^n_{p=0}\sum^n_{j=0}a_pg_{pj}h_{pj}\right]X^j+\left(\sum^n_{j=0}a_1g_{1j}h_{1j}X^j\right)
 \left(\sum^n_{j=0}a_ng_{nj}h_{nj}X^j\right)=\\
 &=P(X)+S(X)T(X),
 \end{align*}
 with
 \begin{align*}
P(X)&=\left[\sum^n_{p=0}\sum^n_{j=0}a_pg_{pj}h_{pj}\right]X^j\\
S(X)&=\sum^n_{j=0}a_1g_{1j}h_{1j}X^j\\
T(X)&=\sum^n_{j=0}a_ng_{nj}h_{nj}X^j
 \end{align*}
 Then
 $$0\in ev\left(A(X),b_0+b_1\gamma+...+b_n\gamma^n\right);$$
 which means that there exists at least a polynomial $f(X)\in A(X)=P(X)+S(X)T(X)$ with 
 $$0\in f(b_0+b_1\gamma+...+b_n\gamma^n).$$
 Then $b_0+b_1\gamma+...+b_n\gamma^n$ is algebraic. Of course, this also imply that $[F(p):F]\le n+1$.
\end{proof}

\section{Vector Spaces}

Since we already have available matrices and polynomials for superrings, a natural extension for the theory is a sort of ''vector space'' and some linear algebra methods. We start this program here, proceeding in a very similar fashion of Hofmann's and Kunze's Linear Algebra Book (\cite{hoffmanlinear}).

\begin{defn}\label{mvec}
    A \textbf{(multi) vector space} over a superfield $F$ is a tuple $(V,+,\cdot,0)$ such that $(V,+,0)$ is an abelian multigroup and $\cdot:F\times V\rightarrow\mathcal P^\ast(V)$ is a function (which image denoted by $\cdot(\lambda,v):=\lambda v$) satisfying the following properties for all $\lambda,\mu\in F$ and all $v,w\in V$:
    \begin{description}
        \item [MV0 -] $1v=\{v\}$ and $0\cdot v=\{0\}$;
        \item [MV1 -] $(\lambda\mu)v=\lambda(\mu v)$.
    \end{description}
    Here we adopt the following convention: if $A\subseteq F$ and $v\in V$, we set
    $$Av:=\bigcup\{\lambda v:\lambda\in A\}.$$
    \begin{description}
        \item [MV2 -] $\lambda(v+w)\subseteq\lambda v+\lambda w$;
        \item [MV3 -] $(\lambda+\mu)v\subseteq\lambda v+\mu v$.
    \end{description}
    The vector space $(V,+,0)$ is \textbf{full} if the equality holds in MV2 and MV3.
\end{defn}

We proceed similarly to the practice used with polynomials and matrices: we omit the word ''multi'' and just say ''vector spaces'' over superfields.

Of course, we stick to vectors spaces here but it is available the Definition for ''modules'', just repacling superfields in Definition \ref{mvec} for superring.

Here are some natural examples of vector spaces.

\begin{prop}\label{extvec}
    Let $K|F$ be a superfield extension. Then $K$ is a $F$-vector space, which is full iff the extension is full.
\end{prop}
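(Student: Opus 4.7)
The plan is to translate the commutative-superring axioms of $K$ directly into the vector-space axioms of Definition~\ref{mvec}, using the inclusion $F\subseteq K$ to restrict the multiplication of the superring $K$ to a scalar action $F\times K\to\mathcal P^*(K)$. Once the dictionary is set up, every clause of Definition~\ref{mvec} is a one-line consequence of the corresponding superring axiom.

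First, $(K,+,-,0)$ is an abelian multigroup by the very definition of a superfield, so the underlying multigroup structure of the vector space is free. The scalar action is well defined and takes values in $\mathcal P^*(K)$ because the multiplication of the superring $K$ is never empty. For MV0, $1\cdot v=\{v\}$ is the unit axiom of the multimonoid $(K,\cdot,1)$, while $0\cdot v=\{0\}$ is the absorbing property of $0$. MV1, namely $(\lambda\mu)v=\lambda(\mu v)$, follows from associativity in the commutative multimonoid: axiom M3 together with M4 gives both inclusions $(\lambda\mu)v\subseteq\lambda(\mu v)$ and $\lambda(\mu v)\subseteq(\lambda\mu)v$ as subsets of $K$.

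For MV2 and MV3, I would invoke the weak/semi distributive law of the superring $K$: $d\in c(a+b)\Rightarrow d\in ca+cb$ (and symmetrically on the right) immediately yields $\lambda(v+w)\subseteq\lambda v+\lambda w$ and $(\lambda+\mu)v\subseteq\lambda v+\mu v$ for $\lambda,\mu\in F$ and $v,w\in K$. This completes the verification that $K$ is an $F$-vector space.

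For the fullness equivalence, the forward direction is clear: when the extension is full, the relevant distributivities in $K$ hold as equalities, so MV2 and MV3 become equalities and the vector space is full. For the converse, assuming $K$ is full as a vector space, one reads off that the distributive law in $K$ holds with equality whenever the ``scalar factor'' lies in $F$; specializing the vector arguments also to $F$ and using commutativity M4 recovers the strict preservation of sums and products by the inclusion $F\hookrightarrow K$, which is precisely the definition of full extension. The main obstacle I expect is exactly this converse step: the vector-space fullness a priori only controls products of the form $\lambda\cdot w$ with $\lambda\in F$, $w\in K$, so one must carefully check that this restricted strict distributivity is sufficient to certify that the inclusion is a full morphism of superrings—everything else is a routine rephrasing of the superring axioms.
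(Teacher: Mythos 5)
Your proof is correct and takes essentially the same route as the paper, which simply defines the scalar action as the restriction of the multiplication of $K$ to $F$ in the first coordinate and asserts that MV0--MV3 and the fullness equivalence are immediate from the superring axioms. You in fact give more detail than the paper's three-line proof, in particular by flagging and resolving (via specializing the vector argument to elements of $F$ such as $v=1$) the only step that is not a purely formal restatement, namely that fullness of the vector space forces the inclusion $F\hookrightarrow K$ to be a full morphism.
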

\begin{proof}
    Here $\cdot:F\rightarrow K\rightarrow\mathcal P^\ast (K)$ is just the restriction of multiplication to $F$ on the first coordinate. M0 is immediate and M1-M3 are consequences of the axioms of superrings. It is immediat that $K$ is a full vector space iff $K|_fF$.
\end{proof}

\begin{teo}\label{fnvec}
    Let $F^n$ be the usual $n$-folded cartesian product $F\times...\times F$. We already know that $F^n$ with the induced sum is a multigroup. Now, for $\lambda\in F$ and $v=(x_1,...,x_n)\in F^n$ define
    $$\lambda v:=(\lambda x_1,...,\lambda x_n).$$
    Then $(F^n,+,\cdot,0)$ is a vector space. Moreover $F^n$ is full iff $F$ is full.
\end{teo}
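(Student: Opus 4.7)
The plan is to verify axioms MV0--MV3 in Definition \ref{mvec} componentwise for $F^n$ with the proposed action $\lambda v = \{(y_1,\ldots,y_n) : y_i \in \lambda x_i \text{ for all } i\}$, reducing each vector-space axiom to the corresponding superring axiom in $F$. Axiom MV0 is immediate: by the multimonoid axiom M2 applied in $F$ we have $1 \cdot x_i = \{x_i\}$, so $1 \cdot v = \{v\}$; and by the absorbing property of $0$ in a superring, $0 \cdot x_i = \{0\}$, so $0 \cdot v = \{0\}$.

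For MV1, unfolding the convention $Av = \bigcup_{\alpha \in A} \alpha v$ gives
\[
(\lambda\mu) v = \{(y_i) : \exists \alpha \in \lambda\mu,\ y_i \in \alpha x_i \ \forall i\}, \quad \lambda(\mu v) = \{(y_i) : \exists z_i \in \mu x_i,\ y_i \in \lambda z_i \ \forall i\},
\]
and the multimonoid associativity axiom M3 applied in $F$ coordinate by coordinate, combined with commutativity M4 to get its converse, yields both inclusions. For MV2, given $(u_1,\ldots,u_n) \in \lambda(v + w)$ there exist $z_i \in x_i + y_i$ with $u_i \in \lambda z_i$, so $u_i \in \lambda(x_i + y_i)$; the semi-distributive law (axiom iv of a superring) then puts $u_i \in \lambda x_i + \lambda y_i$, witnessing $(u_1,\ldots,u_n) \in \lambda v + \lambda w$. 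Axiom MV3 is handled by the symmetric argument using $(\lambda + \mu) x_i \subseteq \lambda x_i + \mu x_i$.

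For the fullness equivalence, if $F$ is full then the semi-distributive inclusions become equalities in $F$, and the componentwise argument above upgrades MV2 and MV3 to equalities in $F^n$. Conversely, if $F^n$ is full, I would restrict to vectors of the form $(x,0,\ldots,0)$ and $(y,0,\ldots,0)$ and use $a + 0 = \{a\}$ together with $\lambda \cdot 0 = \{0\}$ to project the equalities in $F^n$ back onto the first coordinate, recovering $\lambda(x+y) = \lambda x + \lambda y$ and $(\lambda + \mu) x = \lambda x + \mu x$ in $F$. The main technical subtlety is the reverse inclusion in MV1 when multiplication in $F$ is genuinely multivalued: one needs a single $\alpha \in \lambda\mu$ witnessing $y_i \in \alpha x_i$ simultaneously in \emph{all} coordinates, which is stronger than the pointwise output of M3 on each index separately. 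In the hyperfield/multifield case multiplication is single-valued and this disappears; in the general superfield case I would handle it by a careful joint application of the converse of M3 across the tuple.
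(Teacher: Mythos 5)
Your proposal takes essentially the same route as the paper's proof: verify the axioms of Definition \ref{mvec} coordinatewise, reducing each to the corresponding axiom of $F$, and for the converse of the fullness claim project back to $F$ using vectors supported in the first coordinate (the paper uses $v=(\alpha,0,\dots,0)$ exactly as you do). You are in fact more thorough than the paper, whose proof only records $1v=v$, checks MV3 (saying ``similarly'' for MV2), and never mentions MV1 or the $0\cdot v=\{0\}$ half of MV0 at all.

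That said, the one step you explicitly defer is the one that does not go through. For the inclusion $\lambda(\mu v)\subseteq(\lambda\mu)v$, coordinatewise associativity of the commutative multimonoid $(F,\cdot,1)$ produces, for each $i$, some $\alpha_i\in\lambda\mu$ with $y_i\in\alpha_i x_i$; but under the convention $Av=\bigcup_{\alpha\in A}\alpha v$, membership of $(y_1,\dots,y_n)$ in $(\lambda\mu)v$ demands a \emph{single} $\alpha\in\lambda\mu$ that works in every coordinate simultaneously, and M3/M4 provide no mechanism for making the witnesses $\alpha_i$ agree. So the ``careful joint application of the converse of M3 across the tuple'' you invoke is not available in general: only $(\lambda\mu)v\subseteq\lambda(\mu v)$ is automatic, and the reverse inclusion can fail when the product in $F$ is genuinely multivalued. (A smaller instance of the same issue: you cite ``M2'' to obtain $1\cdot x_i=\{x_i\}$, but M2 is a multigroup axiom; the multiplicative structure of a superring is only a multimonoid, which guarantees $x_i\in 1\cdot x_i$, not equality.) Both problems vanish for hyperfields and multifields, where $\cdot$ is single-valued, and the paper's own proof silently skips MV1 altogether --- so your write-up surfaces a gap in the statement for general superfields rather than introducing one the paper avoids; still, as written, your argument for MV1 would not close.
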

\begin{proof}
    We already have that $F^n$ is commutative a superring. By the very Definition os scalar product we get $1v=v$. Now let $v,w\in F^n$, $v=(x_1,...,x_n)$, $w=(y_1,...,y_n)$ and $\lambda,\mu\in F$. We have
    \begin{align*}
        (\lambda+\mu)&v:=((\lambda+\mu)x_1,...,(\lambda+\mu)x_n)\\
        &\subseteq(\lambda x_1+\mu x_1,...,\lambda x_n+\mu x_n) \\
        &=(\lambda x_1,...,\lambda x_n)+(\mu x_1,...,\mu x_n)=\lambda v+\mu w.
    \end{align*}
    Similarly we conclude that $(\lambda+\mu)v\subseteq\lambda v+\mu v$.

    Then $F^n$ is a vector space which is full if $F$ is full.

    Now suppose $F^n$ full. Then for $\alpha,\lambda,\mu\in F$ and $v=(\alpha,0,...,0)$ we have 
    $$(\lambda\alpha+\mu\alpha,0,...,0)=\lambda v+\lambda v=(\lambda+\mu)v=((\lambda+\mu)\alpha,0,...,0);$$
    which means $(\lambda+\mu)\alpha=\lambda\alpha+\mu\alpha$. Similarly we conclude that $\alpha(\lambda+\mu)=\alpha\lambda+\alpha\mu$.
\end{proof}

\begin{teo}
    Let $F$ be a superfield and $n\ge1$. Then $M_n(F)$ is a vector space which is full iff $F$ is full.
\end{teo}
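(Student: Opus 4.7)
The plan is to mirror the proof of Theorem \ref{fnvec} for $F^n$, exploiting the fact that both the additive multigroup structure and the scalar action on $M_n(F)$ are defined entry-wise. Since Theorem \ref{matrix1} already gives that $(M_n(F),+,-,0)$ is a commutative multigroup, only the four axioms MV0--MV3 governing $\cdot : F \times M_n(F) \to \mathcal P^*(M_n(F))$ defined by $\lambda A = \{(d_{ij}) : d_{ij} \in \lambda a_{ij}\}$ need to be checked, and each one reduces to the corresponding property of the superring $F$ applied coordinate by coordinate.

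For MV0, I would use the superring identities $1\cdot a_{ij} = \{a_{ij}\}$ and $0\cdot a_{ij}=\{0\}$ entry-wise to conclude $1\cdot A=\{A\}$ and $0\cdot A=\{0\}$. For MV1, associativity of multiplication in the multimonoid $(F,\cdot,1)$ gives $(\lambda\mu)a_{ij}=\lambda(\mu a_{ij})$ as subsets of $F$ for every entry, hence $(\lambda\mu)A=\lambda(\mu A)$. For MV2 and MV3, the weak distributive law in $F$ is applied entry-wise: if $D\in\lambda(A+B)$ then there is some $C\in A+B$ with $d_{ij}\in\lambda c_{ij}$ and $c_{ij}\in a_{ij}+b_{ij}$, so $d_{ij}\in\lambda a_{ij}+\lambda b_{ij}$, which exhibits $D$ as a sum of a matrix in $\lambda A$ and a matrix in $\lambda B$. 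The argument for MV3 is identical with the roles of scalars and entries swapped.

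For the characterization of fullness, if $F$ is full then the entry-wise inclusions above are entry-wise equalities, so $\lambda(A+B)=\lambda A+\lambda B$ and $(\lambda+\mu)A=\lambda A+\mu A$. Conversely, if $M_n(F)$ is full, I would imitate the last paragraph of the proof of Theorem \ref{fnvec}: pick any $\alpha,\beta,\lambda,\mu\in F$ and consider matrices of the form $\alpha E_{11}$ and $\beta E_{11}$ (zero outside position $(1,1)$). The hypothesized equalities on $M_n(F)$ are determined by their $(1,1)$-entries, and so immediately force $(\lambda+\mu)\alpha=\lambda\alpha+\mu\alpha$ and $\lambda(\alpha+\beta)=\lambda\alpha+\lambda\beta$ in $F$, establishing that $F$ is full.

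The main subtlety is interpretive rather than technical: under the convention $Av=\bigcup\{\nu v:\nu\in A\}$ applied literally to $A=\lambda+\mu\subseteq F$, a uniform $\nu$ works for every entry of the matrix, whereas on the right-hand side $\lambda A+\mu A$ the entries can be independently chosen. As in the proof of Theorem \ref{fnvec}, the scalar action on $M_n(F)$ is to be read entry-wise (this is the natural reading since $+$ and $\cdot$ on $M_n(F)$ are themselves entry-wise), and with this reading the argument goes through routinely; indeed, one can even phrase the whole proof as a transfer from Theorem \ref{fnvec} through the obvious isomorphism $M_n(F)\cong F^{n^2}$ of abelian multigroups with $F$-action.
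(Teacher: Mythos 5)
Your proof is correct and follows essentially the same route as the paper: the paper disposes of this theorem in one line by citing Lemma \ref{matrix4} and ``identifying $F$ with $M_{1\times1}(F)$,'' which is exactly the entry-wise reduction you carry out explicitly (equivalently, your transfer through $M_n(F)\cong F^{n^2}$ and Theorem \ref{fnvec}). Your explicit treatment of the uniform-scalar versus entry-wise reading of $(\lambda+\mu)A$ is a worthwhile clarification that the paper glosses over, but it does not constitute a different method.
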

\begin{proof}
    This is consequence of Lemma \ref{matrix4}, identifying $F$ with $M_{1\times1}(F)$.
\end{proof}

\begin{teo}
    Let $F$ be a superfield and $n\ge1$. Then $F[X_1,...,X_n]$ is a vector space which is full iff $F$ is full.
\end{teo}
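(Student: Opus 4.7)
The plan is to proceed by induction on $n$, reducing to the base case $n=1$ (the polynomial superring $F[X]$) and then iterating via the recursive definition $F[X_1,\ldots,X_n] = (F[X_1,\ldots,X_{n-1}])[X_n]$. For the base case, I would rely on the groundwork already laid in Lemma \ref{lemperm}: part (f) asserts that $F[X]$ is itself a superring, and part (g) provides a full embedding $\iota : a \mapsto \underline{a}$ of $F$ into $F[X]$. The scalar action is then defined by $\lambda \cdot f := \underline{\lambda} \cdot f$ using the superring multiplication of $F[X]$, producing a non-empty subset of $F[X]$.

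To verify the axioms, I would unpack each using the coefficient-wise description of multiplication by a constant polynomial: for $f = (f_k)_{k\in\omega} \in F[X]$, one has $g \in \underline{\lambda}\cdot f$ if and only if $g_k \in \lambda f_k$ for every $k$. From this, MV0 follows at once: $\underline{1}\cdot f = \{f\}$ (because $1\cdot a = \{a\}$ in $F$) and $\underline{0}\cdot f = \{0\}$ by the absorbing property of $0$. Axiom MV1, $(\lambda\mu)\cdot f = \lambda\cdot(\mu\cdot f)$, reduces coefficient-wise to associativity of the commutative multimonoid structure on $F$.

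For MV2 and MV3, interpreted coefficient-wise, the inclusions translate directly into the weak distributive laws $\lambda(a+b) \subseteq \lambda a + \lambda b$ and $(\lambda+\mu)a \subseteq \lambda a + \mu a$ in $F$, so they hold in $F[X]$. For the ``full iff $F$ full'' equivalence, the forward direction follows by specialising to polynomials of degree zero, where all operations restrict to those of $F$ via the full embedding $\iota$. The converse again proceeds coefficient-wise: fullness of $F$ upgrades each of the above inclusions to an equality at every coefficient position, and hence in $F[X]$.

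The inductive step is then routine: applying the $n=1$ case to the superring $R := F[X_1,\ldots,X_{n-1}]$ makes $R[X_n] = F[X_1,\ldots,X_n]$ a vector space over $R$, and composing with the full $F$-embedding into $R$ (itself full by the inductive hypothesis together with Lemma \ref{factstrong2}(e)) yields the required $F$-vector space structure, with fullness transferring in both directions. I expect the main subtlety to be the warning in the remark after Lemma \ref{lemperm} that $F[X]$ need not be a full \emph{superring} even when $F$ is full; the resolution is that ``full vector space'' only demands the distributive equalities for scalars coming from $F$, and this weaker requirement is exactly what the coefficient-wise analysis delivers.
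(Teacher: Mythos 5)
Your proposal is correct and takes essentially the same route as the paper: the paper's proof simply observes that the argument is the coordinate-wise verification already carried out for $F^n$ in Theorem \ref{fnvec}, and your coefficient-wise reduction of MV0--MV3 to the corresponding (weak) distributive and associative laws in $F$, together with the degree-zero specialisation for the converse, is exactly that argument. The induction on $n$ and the explicit handling of the scalar action via the full embedding $a\mapsto\underline{a}$ are useful extra detail but do not change the substance.
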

\begin{proof}
    The argument here is similar to the one in Theorem \ref{fnvec}.
\end{proof}

% Subespaço
\begin{defn}[Subspace]
    Let $V$ be a $F$-vector space and $W\subseteq V$. We say that $W$ is a \textbf{subespace} if $0\in W$ and for all $w_1,w_2\in W$ and all $\lambda\in F$ we have $w_1+w_2\subseteq W$ and $\lambda w_1\subseteq W$.
\end{defn}

\begin{teo}
    Let $F$ be a full superfield and consider a system $Ax=0$, $A\in M_{n\times m}(F)$. Then
    $$\mbox{Sol}[Ax=0]:=\{v\in M_{n\times 1}(F):0\in Av\}$$
    is a subspace of $M_{n\times 1}(F)$.
\end{teo}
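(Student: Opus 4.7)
The plan is to verify the three conditions in the definition of subspace: that $0 \in \mbox{Sol}[Ax=0]$, that $\lambda v \subseteq \mbox{Sol}[Ax=0]$ whenever $v \in \mbox{Sol}[Ax=0]$ and $\lambda \in F$, and that $v_1 + v_2 \subseteq \mbox{Sol}[Ax=0]$ whenever $v_1, v_2 \in \mbox{Sol}[Ax=0]$. The zero-vector case is immediate from Lemma \ref{matrix4}(a), which gives $A \cdot 0 = \{0\}$ and hence $0 \in A \cdot 0$.

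For closure under scalar multiplication, I would fix $v \in \mbox{Sol}[Ax=0]$ and $\lambda \in F$, take an arbitrary $w \in \lambda v$ (so $w_k \in \lambda v_k$ componentwise), and try to show $0 \in Aw$. Using commutativity and associativity of the superring product together with the assumption that $F$ is full (so that $a_{ik}(\lambda v_k) = \lambda (a_{ik} v_k)$ as sets) and then applying full distributivity to collect the componentwise sums, one obtains the inclusion $Aw \subseteq \lambda(Av)$. Since $\lambda \cdot 0 = \{0\}$ and $0 \in Av$, the right-hand side contains $0$.

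The additive case is structurally the same. For $v_1, v_2 \in \mbox{Sol}[Ax=0]$ and $v_3 \in v_1 + v_2$, the full equality in Lemma \ref{matrix4}(c) (available because $F$ is full, cf.\ Theorem \ref{matrix3}) gives $A(v_1 + v_2) = Av_1 + Av_2$; since $Av_3 \subseteq A(v_1 + v_2)$ by the very definition of the left-hand side as a union, we obtain $Av_3 \subseteq Av_1 + Av_2$. Combining $0 \in Av_1$, $0 \in Av_2$, and the multigroup identity $0 + 0 = \{0\}$ places $0 \in Av_1 + Av_2$.

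I expect the main obstacle to surface at exactly this closing step. Both the scalar and the additive argument show only that $0$ belongs to the right-hand set of an inclusion ($\lambda(Av)$ or $Av_1 + Av_2$), but the subspace definition demands $0 \in Aw$, respectively $0 \in Av_3$, for the \emph{particular} element that was chosen. In the multivalued setting these inclusions can be strict (one can check, for example, in $F = Q_2$ with $A = (1,1)$, $v_1 = (1,-1)^T$, $v_2 = (-1,1)^T$, that $(1,1)^T \in v_1+v_2$ fails to lie in $\mbox{Sol}$), so the gap is genuine. Bridging it will require either a sharper analysis of which $w \in v_1+v_2$ realize specific elements of $Av_1 + Av_2$ — effectively reconstructing, from chosen witnesses in $Av_1$ and $Av_2$ summing to $0$, an explicit $v_3$ with $0 \in Av_3$ — or a reinterpretation of the theorem in the ``weak solution'' spirit used elsewhere in the paper, replacing $\lambda v \subseteq \mbox{Sol}$ and $v_1+v_2 \subseteq \mbox{Sol}$ by the existential $\lambda v \cap \mbox{Sol} \neq \emptyset$ and $(v_1+v_2) \cap \mbox{Sol} \neq \emptyset$.
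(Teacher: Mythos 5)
Your analysis is essentially the paper's own argument, carried one step further than the paper itself goes. The paper's proof of this theorem is a single line: it invokes Lemma \ref{matrix4} and concludes that ``if $0\in Av$ and $0\in Aw$ then $0\in A(v+w)=Av+Aw$.'' That is exactly the computation you perform, and it stops precisely at the point you flag: $A(v+w)$ is by definition the union $\bigcup_{u\in v+w}Au$, so $0\in A(v+w)$ only yields \emph{some} $u\in v+w$ with $0\in Au$, whereas the paper's definition of subspace demands $v+w\subseteq \mbox{Sol}[Ax=0]$, i.e.\ $0\in Au$ for \emph{every} such $u$. The paper silently conflates these two readings; you are right that the gap is genuine and not bridgeable by the stated lemmas.

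Your counterexample also checks out. In $F=Q_2$ (a hyperfield, hence full) with $A=(1,1)$, $v_1=(1,-1)^T$, $v_2=(-1,1)^T$, one has $Av_1=Av_2=1+(-1)=\{-1,0,1\}\ni 0$, while $(1,1)^T\in v_1+v_2$ and $A(1,1)^T=1+1=\{1\}\not\ni 0$. So with the paper's definition of subspace taken literally, the theorem is false, and the correct fix is the one you propose: either the closure conditions in the definition of subspace must be read existentially ($\left(w_1+w_2\right)\cap W\neq\emptyset$, $\lambda w_1\cap W\neq\emptyset$), matching the ``weak solution'' conventions used elsewhere in the paper, or the theorem's conclusion must be weakened accordingly. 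The scalar-multiplication half survives in hyperfields (where the product is single-valued, $Aw=\lambda(Av)\ni\lambda\cdot 0=0$), but for a general full superfield even that step only gives $Aw\subseteq\lambda(Av)$, with the same existential caveat. In short: your proposal is not missing an idea the paper supplies; it correctly diagnoses a flaw that the paper's own proof does not address.
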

\begin{proof}
    This is another consequence of Lemma \ref{matrix4}. We need $F$ full in order to conclude that if $0\in Av$ and $0\in Aw$ then $0\in A(v+w)=Av+Aw$.
\end{proof}

% Subespaço gerado
\begin{defn}[Spanned Subspace]
    Let $V$ be a $F$-vector space and $A\subseteq V$. The \textbf{subspace generated by $A$} is defined by
    $$\langle A\rangle:=\bigcap\{W\subseteq V:W\mbox{ is a subspace and }A\subseteq W\}.$$
\end{defn}

% \begin{defn}[Almost Full Subset]
%     Let $V$ be a $F$-vector space and $A\subseteq V$. We say that $A$ is an \textbf{almost full} subset of $V$ if for all $v\in A$ and all $\lambda,\mu\in F$ we have $(\lambda +\mu)v=\lambda v+\mu v$.
% \end{defn}

% Using induction, if $A\subseteq V$ is an almost full subset and $v_1,...,v_n\in A$, $\lambda_1,...,\lambda_n,\mu_1,...,\mu_n\in F$ we have
% $$(\lambda_1+\mu_1)v_1+...+(\lambda_n+\mu_n)v_n=\lambda_1v_1+...+\lambda_nv_n+\mu_1v_1+...+\mu_nv_n.$$

% Combinação linear
\begin{defn}[Linear Combination]
    Let $V$ be a $F$-vector space, $A\subseteq V$ and $w\in V$. We say that $w$ is a \textbf{linear combination} of elements in $A$ if there exist $\{v_1,...,v_n\}\subseteq A$ with $$w\in\sum^{r_1}_{j=1}\lambda_{j1}v_1+...+\sum^{r_n}_{j=1}\lambda_{jn}v_n$$
    for some $\lambda_{ij}\in F$. We denote the set of linear combinations of $V$ by
    \begin{align*}
        \mathcal{CL}(A)&=\bigcup\left\lbrace\sum^{r_1}_{j=1}\lambda_{j1}v_1+...+\sum^{r_n}_{j=1}\lambda_{jn}v_n:\{v_1,...,v_n\}\subseteq A,\,\lambda_{ij}\in F,\,r_1,...,r_n\in\mathbb N\right\rbrace.
    \end{align*}
\end{defn}

If $V$ is full, then 
$$\mathcal{CL}(A):=\bigcup\{\lambda_1v_1+...+\lambda _nv_n:v_i\in A,\,\lambda_i\in F,\,i=1,...,n,\,n\ge1\}.$$

\begin{teo}\label{gen1}
    Let $V$ be a $F$-vector space and $A\subseteq V$. Then $\langle A\rangle=\mathcal{CL}(A)$.
\end{teo}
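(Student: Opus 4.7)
The plan is to prove the two inclusions separately: first show $\mathcal{CL}(A)\subseteq\langle A\rangle$ by a direct containment argument against every subspace containing $A$, and then show that $\mathcal{CL}(A)$ is itself a subspace containing $A$, which forces $\langle A\rangle\subseteq\mathcal{CL}(A)$ by minimality of the intersection.

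For $\mathcal{CL}(A)\subseteq\langle A\rangle$: let $W$ be any subspace with $A\subseteq W$. Every iterated scalar multiple $\lambda_{jk}v_k$ with $v_k\in A$ lies in $W$ by closure of $W$ under scalar multiplication, and hence every finite multi-sum $\sum^{r_1}_{j=1}\lambda_{j1}v_1+\cdots+\sum^{r_n}_{j=1}\lambda_{jn}v_n$ is contained in $W$ by iterating closure under sum. Therefore any $w\in\mathcal{CL}(A)$ lies in $W$; intersecting over all such $W$ yields $w\in\langle A\rangle$.

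For $\langle A\rangle\subseteq\mathcal{CL}(A)$: I verify the three subspace axioms for $\mathcal{CL}(A)$. The inclusion $A\subseteq\mathcal{CL}(A)$ holds because for $v\in A$ axiom MV0 gives $v\in 1\cdot v$, which is the trivial linear combination with $n=r_1=1$ and $\lambda_{11}=1$; in particular $A\ne\emptyset\Rightarrow 0\in 0\cdot v\subseteq \mathcal{CL}(A)$ (the case $A=\emptyset$ is handled by the convention that the empty linear combination yields $0$, or by noting $\langle\emptyset\rangle=\{0\}$ trivially). Closure under sum: if $w_1\in\sum_{k=1}^{n}\sum_{j=1}^{r_k}\lambda_{jk}v_k$ and $w_2\in\sum_{k=1}^{m}\sum_{j=1}^{s_k}\mu_{jk}u_k$ with $v_k,u_k\in A$, then every element of $w_1+w_2$ is an element of the combined multi-sum, which is a linear combination of the elements of $\{v_1,\dots,v_n,u_1,\dots,u_m\}\subseteq A$; thus $w_1+w_2\subseteq\mathcal{CL}(A)$. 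Closure under scalar multiplication: given $\lambda\in F$ and $w\in\sum_{k=1}^{n}\sum_{j=1}^{r_k}\lambda_{jk}v_k$, repeated application of MV2 to the outer and inner sums gives
\[
\lambda w\in \lambda\!\left(\sum_{k=1}^{n}\sum_{j=1}^{r_k}\lambda_{jk}v_k\right)\subseteq \sum_{k=1}^{n}\sum_{j=1}^{r_k}\lambda(\lambda_{jk}v_k)=\sum_{k=1}^{n}\sum_{j=1}^{r_k}(\lambda\lambda_{jk})v_k,
\]
where the last equality uses axiom MV1. The right-hand side is a linear combination of elements of $A$, so $\lambda w\subseteq\mathcal{CL}(A)$. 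Hence $\mathcal{CL}(A)$ is a subspace containing $A$, and $\langle A\rangle\subseteq\mathcal{CL}(A)$ follows from the definition of $\langle A\rangle$ as the intersection of all such subspaces.

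The only potentially subtle point is the scalar-multiplication step, since MV2 and MV3 are only inclusions (not equalities) in a general, non-full vector space; but this is precisely the direction we need, and MV1 suffices to convert $\lambda(\lambda_{jk}v_k)$ into the single set $(\lambda\lambda_{jk})v_k$ belonging to the defining union of $\mathcal{CL}(A)$. No fullness of $V$ is required.
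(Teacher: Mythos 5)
Your proof is correct and follows essentially the same route as the paper: the paper also establishes $\langle A\rangle\subseteq\mathcal{CL}(A)$ by noting $\mathcal{CL}(A)$ is a subspace containing $A$, and the reverse inclusion by showing every subspace containing $A$ contains $\mathcal{CL}(A)$; you simply fill in the verification of the subspace axioms (correctly handling the non-full case via MV1 and MV2, and the multivalued scalar product $\lambda\lambda_{jk}$) that the paper leaves to the reader. The only nitpick is notational: in the scalar-multiplication step, $\lambda w$ is a subset of $V$, so the first relation in your display should read $\lambda w\subseteq\lambda\bigl(\sum_{k}\sum_{j}\lambda_{jk}v_k\bigr)$ rather than $\in$.
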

\begin{proof}
    We have that $\mathcal{CL}(A)$ is a subspace, which provide $\langle A\rangle\subseteq\mathcal{CL}(A)$. If $W\subseteq V$ and $A\subseteq W$, by the very Definition of subspace (and induction) we have $\mathcal{CL}(A)\subseteq W$, which provide $\mathcal{CL}(A)\subseteq\langle A\rangle$.
\end{proof}

\begin{lem}\label{gen2}
    Let $V$ be a $F$-vector space and $A,B\subseteq V$. Then
    \begin{enumerate}[i -]
        \item $\langle\langle A\rangle\rangle=\langle A\rangle$;
        \item if $A\subseteq B$ then $\langle A\rangle$ is a subspace of $\langle B\rangle$;
        \item if $A\subseteq B$ and for all $v\in B$, $v\in\langle A\rangle$ then $\langle A\rangle=\langle B\rangle$.
    \end{enumerate}
\end{lem}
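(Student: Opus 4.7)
The plan is to treat all three items as essentially formal consequences of the characterization $\langle A\rangle = \mathcal{CL}(A)$ from Theorem \ref{gen1} (which in particular tells us $\langle A\rangle$ is itself a subspace) together with the description of $\langle \cdot \rangle$ as an intersection. No subtle superfield-theoretic arguments should be needed; the axioms MV0--MV3 enter only through Theorem \ref{gen1}.

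For (i), I would argue by double inclusion. Since $\langle A\rangle$ is a subspace of $V$ (by Theorem \ref{gen1}) that trivially contains $\langle A\rangle$, it appears in the family whose intersection defines $\langle\langle A\rangle\rangle$; therefore $\langle\langle A\rangle\rangle \subseteq \langle A\rangle$. Conversely, every subspace $W$ with $\langle A\rangle \subseteq W$ in particular contains $A$, so the family defining $\langle\langle A\rangle\rangle$ is a subfamily of the one defining $\langle A\rangle$; intersection reverses inclusion, giving $\langle A\rangle \subseteq \langle\langle A\rangle\rangle$.

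For (ii), assume $A\subseteq B$. Any subspace $W$ containing $B$ also contains $A$, so
\[
\{W\subseteq V : W\text{ subspace},\, B\subseteq W\}\subseteq\{W\subseteq V : W\text{ subspace},\, A\subseteq W\},
\]
and again intersection reverses inclusion, yielding $\langle A\rangle\subseteq \langle B\rangle$. Since $\langle A\rangle$ is itself a subspace by Theorem \ref{gen1}, it is a subspace of $\langle B\rangle$. For (iii), combine (ii) with the reverse inclusion: the hypothesis says $B\subseteq \langle A\rangle$, and because $\langle A\rangle$ is a subspace containing $B$, it is one of the subspaces intersected to define $\langle B\rangle$; hence $\langle B\rangle\subseteq \langle A\rangle$, and equality follows from (ii).

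I do not expect any real obstacle here; the only point to be careful about is to invoke Theorem \ref{gen1} at the beginning to guarantee that $\langle A\rangle$ is a subspace (this is what makes the arguments in (i) and (iii) go through without any linear-combination computations, avoiding the subtleties introduced by the non-distributive inclusions MV2 and MV3 in the non-full case).
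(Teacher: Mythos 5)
Your argument is correct, and it is worth noting that the paper itself leaves the proof environment for this lemma empty, so there is nothing to compare against: your write-up simply supplies the routine verification the authors omitted. The two ingredients you isolate --- the intersection definition of $\langle\cdot\rangle$ (together with the trivial facts that $A\subseteq\langle A\rangle$ and that the defining family is nonempty because $V$ is a subspace) and the fact from Theorem \ref{gen1} that $\langle A\rangle$ is itself a subspace --- are exactly what is needed, and none of the multivalued subtleties of MV2--MV3 enter.
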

\begin{proof}
    
\end{proof}

% LI x LD
\begin{defn}[Linear Independence]
    Let $V$ be a $F$-vector space and $A\subseteq V$. We say that $A$ is \textbf{$F$-linearly independent} if for all 
distinct $v_1,...,v_n\in A$, $n\in\mathbb N$, the following hold:
$$\mbox{If }0\in\sum^{r_1}_{j=1}\lambda_{j1}v_1+...+\sum^{r_n}_{j=1}\lambda_{jn}v_n\mbox{ then }0\in\sum^{r_1}_{j=1}\lambda_{ji}\mbox{ for all }i=1,...,n.$$
and $I$ is \textbf{$F$-linearly dependent} if it is not $F$-linearly independent.
\end{defn}

% Base
\begin{defn}[Base]
    Let $V$ be a $F$-vector space and $B\subseteq V$. We say that $B$ is a \textbf{$F$-basis} if $B$ is linear independent and $V=\langle B\rangle$.
\end{defn}

\begin{defn}
    We say that a $F$-vector space is \textbf{finitely generated} if $V=\langle S\rangle$ for some $S\subseteq V$ finite.
\end{defn}

\begin{teo}\label{basis1}
    Every finitely generated $F$-vector space $V$ has a basis.
\end{teo}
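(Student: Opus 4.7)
The plan is to imitate the classical textbook proof: start with the given finite generating set and iteratively discard redundant elements. Concretely, I would argue by strong induction on the size $n$ of a generating set. The base case $n = 0$ gives $V = \langle \emptyset \rangle = \{0\}$, for which $\emptyset$ is vacuously a linearly independent generating set, hence a basis.

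For the inductive step, assume the theorem for every superfield vector space generated by fewer than $n$ elements, and let $V = \langle S \rangle$ with $|S| = n$. If $S$ is linearly independent we are finished; otherwise the strategy is to exhibit some $v_k \in S$ with $v_k \in \mathcal{CL}(S \setminus \{v_k\}) = \langle S \setminus \{v_k\} \rangle$ (the equality coming from Theorem \ref{gen1}). Once this is done, Lemma \ref{gen2}(iii) applied to $A = S \setminus \{v_k\} \subseteq S = B$ yields $\langle S \setminus \{v_k\}\rangle = \langle S \rangle = V$, so $V$ is generated by $n - 1$ elements and the inductive hypothesis produces a basis.

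The key technical claim is therefore: \emph{if a finite set $T = \{v_1, \ldots, v_m\}$ is linearly dependent, then some $v_k \in T$ lies in $\mathcal{CL}(T \setminus \{v_k\})$.} To prove it, pick scalars $\lambda_{ji}\in F$ with $0 \in \sum_{j}\lambda_{j1}v_1 + \cdots + \sum_{j}\lambda_{jm}v_m$ and with the coefficient set $A_k := \sum_{j=1}^{r_k}\lambda_{jk}$ not containing $0$; reindex so $k = m$. Unfolding the multisum provides $\mu_i \in A_i$ and $u_i \in \mu_i v_i$ with $0 \in u_1 + \cdots + u_m$ and $\mu_m \neq 0$. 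Axiom M1 of the additive multigroup delivers $u_m \in (-u_1) + \cdots + (-u_{m-1})$, and choosing a weak inverse $\nu_m$ of $\mu_m$ (so $1 \in \nu_m\mu_m$), then multiplying by $\nu_m$ and applying MV1, MV2, and the rule of signs, produces the set inclusion $\nu_m u_m \subseteq \sum_{i<m}(-\nu_m \mu_i)v_i$. Since $1 \in \nu_m\mu_m$, one has $v_m \in 1\cdot v_m \subseteq (\nu_m \mu_m)v_m = \nu_m(\mu_m v_m)$; so if one can realize this containment by picking $w \in \mu_m v_m$ with $v_m \in \nu_m w$ compatible with the chosen relation, then $v_m$ appears in the inverted combination and the claim follows, giving $v_m \in \sum_{i<m}(-\nu_m \mu_i) v_i \in \mathcal{CL}(T \setminus \{v_m\})$.

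The main obstacle is precisely this last compatibility step. In the multi-valued setting one cannot automatically cancel: from $u_m \in \mu_m v_m$ it is not guaranteed that $v_m \in \nu_m u_m$, because the multiplicative structure on a general superfield is not required to be a multigroup (compare the footnote accompanying the definition of quasi-superfield). When $V$ is full and $F$ is full, MV2 becomes an equality and one can multiply the entire relation $0 \in \sum A_i v_i$ by $\nu_m$ to get $0 \in \sum (\nu_m A_i) v_i$ with $1 \in \nu_m A_m$, directly isolating the term $1\cdot v_m = \{v_m\}$. In the general case one has to show that a realization $(\mu_i, u_i)$ can be chosen so that the element of $\mu_m v_m$ appearing is one for which $v_m \in \nu_m u_m$; verifying this, or otherwise circumventing the cancellation issue using the superdomain axiom or properties of full morphisms, is where the real work will lie.
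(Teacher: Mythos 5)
Your strategy coincides with the paper's: start from a finite generating set, and if it is dependent, discard one generator whose removal does not shrink the span (via Theorem \ref{gen1} and Lemma \ref{gen2}), repeating until an independent set remains. The only structural difference is that you phrase it as an induction on the size of the generating set while the paper descends through $\{v_k,\dots,v_n\}$ directly; that is cosmetic. What is not cosmetic is that the ``key technical claim'' you isolate --- a linearly dependent finite set has a member lying in $\mathcal{CL}$ of the others --- is exactly the step the paper passes over: its proof says only ``we can suppose without loss of generality that $v_1\in\mathcal{CL}(\{v_2,\dots,v_n\})$,'' with no argument and no supporting lemma elsewhere in the text.

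As a proof, then, your submission is incomplete precisely where you say it is, and the difficulty you diagnose is genuine. From $0\in\sum_i A_iv_i$ with $0\notin A_m$ one extracts $\mu_i\in A_i$ and $u_i\in\mu_iv_i$ with $0\in u_1+\cdots+u_m$, and MV1 gives $v_m\in(\nu_m\mu_m)v_m=\nu_m(\mu_mv_m)=\bigcup_{u\in\mu_mv_m}\nu_mu$; but this only produces \emph{some} $u\in\mu_mv_m$ with $v_m\in\nu_mu$, not the particular $u_m$ realizing the dependence relation, and Definition \ref{mvec} imposes no cancellation law that would bridge this. Note also that your proposed shortcut in the full case is not quite immediate either: multiplying the relation by $\nu_m$ yields $0\in\sum_i(\nu_mA_i)v_i$ with $1\in\nu_mA_m$, but since $(\nu_mA_m)v_m$ is a union over the coefficient set, the summand that actually occurs in a witnessing decomposition need not be $1\cdot v_m$. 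So either the theorem needs a hypothesis under which ``dependent $\Rightarrow$ some generator is a combination of the rest'' can be proved as a separate lemma, or that implication must be built into the definition of dependence. You have not failed to reproduce the paper's argument; you have located the gap in it.
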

\begin{proof}
    Let $V$ be a finitely generated $F$-vector space with $V=\langle v_1,...,v_n\rangle$ ($v_1,...,v_n\in V$). If $\{v_1,...,v_n\}$ is LI we are done. If not, after a rearrangement of indexes if necessary, we can suppose without loss of generality that $v_1\in\mathcal{CL}(\{v_2,...,v_n\})$. Then (using Theorem \ref{gen1} and Lemma \ref{gen2}) we have
    $$V=\mathcal{CL}(\{v_1,...,v_n\})=\mathcal{CL}(\{v_2,...,v_n\}).$$
    If $\{v_2,...,v_n\}$ we are done. If not, suppose without loss of generality that $v_2\in\mathcal{CL}(\{v_3,...,v_n\})$. Then we have
    $$V=\mathcal{CL}(\{v_1,...,v_n\})=\mathcal{CL}(\{v_2,...,v_n\})=\mathcal{CL}(\{v_3,...,v_n\}).$$
    Repeating this process, after a number finite of steps we arrive at a basis $\{v_k,v_{k+1},...,v_n\}$ of $V$ for some $k$ with $1\le k\le n$.
\end{proof}

Unfortunately, we do not know if, for general superfields $F$, all basis in a finitely generated $F$-vector spaces has the same dimension. In order to deal with this question, we propose the following concept.

\begin{defn}\label{linearly-closed}
    Let $F$ be superfield. We say that $F$ is \textbf{linearly closed} if the system $Ax=0$ has at least a non trivial solution weak solution for all $A\in M_{n\times m}(F)$ with $m>n$.
\end{defn}

Of course, every field is a linearly closed superfield. As we will see later (Theorem \ref{basis5}), this is also the case for hyperfields. The concept of linearly closeness is useful to get the notion of dimension for a subclass of finitely generated $F$-vector spaces.

\begin{teo}\label{basis2}
    Let $F$ be a linearly closed superfield and $V$ be a finitely generated $F$-vector space with $V=\langle v_1,...,v_n\rangle$ ($v_1,...,v_n\in V$). Then every linear independent subset of $V$ has at most $n$ elements.
\end{teo}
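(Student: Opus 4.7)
I would proceed by contradiction, following the classical route in which a ``wide'' homogeneous scalar system yields a non-trivial linear dependence among more vectors than generators. Suppose $\{w_1,\dots,w_m\}\subseteq V$ is linearly independent with $m>n$. Since $V=\langle v_1,\dots,v_n\rangle=\mathcal{CL}(\{v_1,\dots,v_n\})$ by Theorem~\ref{gen1}, unfolding the definition of $\mathcal{CL}$ produces, for each $j$, scalars $b_{ij}\in F$ and vectors $\xi_{ij}\in b_{ij}v_i$ such that
\[
w_j\in \xi_{1j}+\xi_{2j}+\cdots+\xi_{nj},
\]
by picking a representative from each multi-sum coefficient appearing in the linear combination that exhibits $w_j$. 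Collect the scalars into the matrix $B=(b_{ij})\in M_{n\times m}(F)$.

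Next, since $m>n$ and $F$ is linearly closed, the homogeneous system $B\mathbf{y}=\mathbf{0}$ admits a non-trivial weak solution $\mathbf{y}=(y_1,\dots,y_m)^T$, meaning
\[
0\in b_{i1}y_1+b_{i2}y_2+\cdots+b_{im}y_m\qquad\text{for every }i=1,\dots,n,
\]
with some $y_{j_0}\neq 0$. My goal is to lift these $n$ scalar relations in $F$ to the single vector relation $0\in y_1w_1+\cdots+y_mw_m$ in $V$; together with $y_{j_0}\neq 0$ this would witness linear dependence of $\{w_1,\dots,w_m\}$ through the LI definition with singleton multi-sum coefficients $\{y_j\}$, contradicting the hypothesis and finishing the proof.

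The main obstacle I expect lies precisely in this lifting. Axioms MV1--MV3 give only containments in the ``wrong'' direction: $y_j(\xi_{1j}+\cdots+\xi_{nj})\subseteq y_j\xi_{1j}+\cdots+y_j\xi_{nj}$ and $(\sum_j y_jb_{ij})v_i\subseteq\sum_j(y_jb_{ij})v_i$, so a naive chase of $\sum_jy_jw_j$ through these axioms lands it \emph{inside} $\sum_i(\sum_j y_jb_{ij})v_i$ rather than the other way around, which does not by itself locate the element $0$. To circumvent this I would work from the inside out, starting from scalar witnesses $t_{ij}\in b_{ij}y_j$ realizing $0\in t_{i1}+\cdots+t_{im}$ for each $i$; using MV1 in the equality form $(y_jb_{ij})v_i=y_j(b_{ij}v_i)$ I would lift each $t_{ij}$ to a vector $s_{ij}\in y_j\xi'_{ij}\subseteq y_jw_j$ (re-choosing representatives $\xi'_{ij}\in b_{ij}v_i$ if necessary) and assemble these into the required element of $\sum_j y_jw_j$. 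When $V$ is full the equalities in MV2 and MV3 make this bookkeeping routine; in the non-full case one natural escape is to replace the single coefficients $y_j$ by genuine multi-sum coefficients $\sum_k\lambda_{kj}$ in the LI definition, choosing $\lambda_{kj}$'s that absorb the losses coming from the one-sided inclusions of MV2--MV3 and for which one still verifies $0\notin\sum_k\lambda_{kj_0}$ from $y_{j_0}\neq 0$, thereby reaching the contradiction in full generality.
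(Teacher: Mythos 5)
Your strategy coincides with the paper's: extract a coefficient matrix from expressions $w_j\in\sum_i a_{ij}v_i$, apply linear closedness to the wide homogeneous system to obtain a non-trivial weak solution $(y_1,\dots,y_m)$, and try to convert it into a dependence relation among the $w_j$. You have also correctly located the delicate point: MV1--MV3 only yield $\sum_j y_jw_j\subseteq\sum_i\bigl(\sum_j y_jb_{ij}\bigr)v_i$, so knowing that $0$ lies in the right-hand side does not place $0$ in $\sum_j y_jw_j$. For what it is worth, the paper's own proof does not address this either: it records the inclusion in the same unhelpful direction and then simply asserts that a weak solution of $Ax=0$ makes $S$ dependent. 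So the obstacle you flag is the genuine weak point of the argument, not an artifact of your write-up.

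The problem is that your proposed repair does not overcome it. The containment $y_j\xi'_{ij}\subseteq y_jw_j$ is false: from $w_j\in\xi'_{1j}+\cdots+\xi'_{nj}$ one gets only $y_jw_j\subseteq y_j\xi'_{1j}+\cdots+y_j\xi'_{nj}$ (even when MV2 holds as an equality of sets, $w_j$ is a single element of the multi-sum, so the membership still dualizes only to an inclusion of $y_jw_j$ into the sum of the pieces, never the reverse). Hence the element you assemble from the witnesses $s_{ij}$ lives in $\sum_{i,j}(y_jb_{ij})v_i$, not in $\sum_j y_jw_j$, and you are back where you started; the same objection applies to your claim that fullness makes the bookkeeping routine. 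The fallback --- replacing the $y_j$ by multi-sum coefficients that ``absorb the losses'' while still verifying $0\notin\sum_k\lambda_{kj_0}$ --- is precisely the missing content of the proof and is not carried out. As it stands, the proposal reproduces the paper's argument up to and including its unjustified final step.
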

\begin{proof}
    We just need to prove that if $S\subseteq V$ and $|S|>n$ then $S$ is linearly dependent.

    Let $S$ be such set with $S=\{w_1,...,w_m\}$, $m>n$. Since $V=\langle v_1,...,v_n\rangle$, then there exists scalars $a_{ij}\in F$ with
    $$w_j\in a_{1j}v_1+...+a_{nj}v_n,\,j=1,...,m.$$
    Then for all $\lambda_1,...,\lambda_m\in F$ we get
    \begin{align*}        \lambda_1w_1+...+\lambda_mw_m&=\sum^m_{j=1}\lambda_jw_j         \subseteq\sum^m_{j=1}\lambda_j\sum^n_{i=1}a_{ij}v_i\subseteq\sum^m_{j=1}\sum^n_{i=1}(\lambda_ja_{ij})v_i=\sum^n_{i=1}\left[\sum^m_{j=1}\lambda_ja_{ij}\right]v_i
    \end{align*}
    If $0\in\lambda_1w_1+...+\lambda_mw_m$ then we get
    $$0\in\sum^n_{i=1}\left[\sum^m_{j=1}\lambda_ja_{ij}\right]v_i,$$
    providing
    $$0\in\begin{pmatrix}a_{11} & a_{21} & \ldots & a_{m1} \\
 a_{21} & a_{22} & \ldots & a_{m2} \\
 \vdots & \vdots & \ddots & \vdots \\
 a_{1n} & a_{2n} & \ldots & a_{mn}
 \end{pmatrix}\begin{pmatrix}\lambda_1\\ \lambda_2\\ \vdots \\ \lambda_m\end{pmatrix}$$
 Let
 $$A=\begin{pmatrix}a_{11} & a_{21} & \ldots & a_{m1} \\
 a_{21} & a_{22} & \ldots & a_{m2} \\
 \vdots & \vdots & \ddots & \vdots \\
 a_{1n} & a_{2n} & \ldots & a_{mn}
 \end{pmatrix}$$
 Since $F$ is linearly closed,  the system $Ax=0$ has a weak solution if $m>n$, and we have that $S$ is linear dependent if $m>n$.
\end{proof}

% Before proceed with the proof of Theorem \ref{basis2}, we need a 
% Lemma.

% % \begin{lem}\label{basis4}
%     Let $F$ be a superfield and $A\in M_{m\times n}(F)$. Then $v$ is a weak solution of $Ax=0$ iff $v$ is a weak solution of a system in $(Ax=0)^{scaled}$. \kmar{desejavel.}
% \end{lem}
% \begin{proof}
%  %    Since the systems in $(Ax=0)^{scaled}$ are obtained from $Ax=0$ by a finite number of elementary operations on the matrix $A$, we just need to prove the desired for an unitary elementary operation. For permutations and multiplication by scalars there is nothing to prove. Let $A=(a_{ij})$ and suppose without loss of generality that we are realizing the operation $L_1\leftarrow L_1+L_2$. If $v$ is such that
%  %    $$\begin{pmatrix}
%  %     0\\0\\\vdots\\0
%  % \end{pmatrix}\in\begin{pmatrix}a_{11}+a_{21} & a_{12}+a_{22} & \ldots & a_{1n}+a_{2n} \\
%  % a_{21} & a_{22} & \ldots & a_{2n} \\
%  % \vdots & \vdots & \ddots & \vdots \\
%  % a_{m1} & a_{m2} & \ldots & a_{mn}
%  % \end{pmatrix}\begin{pmatrix}
%  %     v_1\\v_2\\\vdots\\v_n
%  % \end{pmatrix}$$
%  % Then
%  % \begin{align*}
%  %     \tag{*}0&\in (a_{11}+a_{21})v_1+...+(a_{1n}+a_{2n})v_n\\
%  %     &\subseteq a_{11}v_1+a_{21}v_1+...+a_{1n}v_n+a_{2n}v_n
%  % \end{align*}
%  % and for all $j\ge2$,
%  % \begin{align*}
%  %     \tag{**}0\in a_{1j}v_1+a_{2j}v_1+...+a_{1j}v_n.
%  % \end{align*}
% \end{proof}

% \begin{lem}\label{basis3}
%     \kmar{Let $F$ be a proto-full superfield and $A\in M_{m\times n}(F)$. If $m>n$ then the system $Ax=0$ has at least a non trivial solution weak solution.}
% \end{lem}
% \begin{proof}
    
% \end{proof}

\begin{teo}\label{basis4}
    Let $F$ be a linearly closed superfield and $V$ be a $F$-vector space finitely generated. If $B_1$ and $B_2$ are basis of $V$ then $|B_1|=|B_2|$.
\end{teo}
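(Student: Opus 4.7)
The plan is to obtain this as a direct consequence of Theorem \ref{basis2}, applied symmetrically. The only subtle point is ensuring that both bases are finite before we can invoke the bound.

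First I would use Theorem \ref{basis1} to fix some finite spanning set: since $V$ is finitely generated, there exists a finite basis $B_0 = \{v_1,\dots,v_n\} \subseteq V$. In particular, $V = \langle B_0 \rangle$. Because $B_1$ is linearly independent and $V$ is spanned by the $n$ vectors of $B_0$, Theorem \ref{basis2} gives $|B_1| \le n$; by the same reasoning applied to $B_2$, we get $|B_2| \le n$. Thus both $B_1$ and $B_2$ are finite.

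Now set $m_1 = |B_1|$ and $m_2 = |B_2|$. Since $B_1$ is a basis of $V$, in particular $V = \langle B_1 \rangle$; and $B_2$ is linearly independent, so Theorem \ref{basis2} applied with the spanning set $B_1$ yields $m_2 \le m_1$. Symmetrically, $V = \langle B_2 \rangle$ and $B_1$ is linearly independent, so $m_1 \le m_2$. Combining these inequalities, $|B_1| = m_1 = m_2 = |B_2|$.

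The only conceptual obstacle is making sure Theorem \ref{basis2} is actually applicable in both directions; this works because its hypothesis only requires that the ambient space be spanned by finitely many vectors, which holds for $V$ via $B_0$ (giving finiteness of both bases) and then via each $B_i$ in turn. No new use of the linearly closed hypothesis is needed beyond what is already built into Theorem \ref{basis2}.
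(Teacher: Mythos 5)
Your proposal is correct and follows essentially the same route as the paper: apply Theorem \ref{basis2} symmetrically, using $B_1$ as the spanning set to bound $|B_2|$ and vice versa. Your preliminary step establishing finiteness of both bases via a finite spanning set is a small extra care that the paper's proof silently assumes, but the core argument is identical.
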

\begin{proof}
    Let $B_1=\{v_1,...,v_n\}$ and $B_2=\{w_1,...,w_m\}$. Since $V=\langle B_1\rangle$ and $B_2$ is linearly independent, by Theorem \ref{basis2} we get $m\le n$. Since $V=\langle B_2\rangle$ and $B_1$ is linearly independent, by Theorem \ref{basis2} we get $n\le m$. Then $m=n$.
\end{proof}

\begin{defn}
    Let $F$ be a linearly closed superfield and $V$ be a $F$-vector space finitely generated. We define the \textbf{dimension} of $V$ by $\dim(V):=|B|$ where $B\subseteq V$ is any basis of $V$.
\end{defn}

\begin{teo}\label{basis5}
    Every hyperfield is linearly closed as a superfield.
\end{teo}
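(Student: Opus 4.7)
The plan is to proceed by induction on $n$, the number of rows of $A$. For the base case $n = 1$, given coefficients $(a_1, \ldots, a_m)$ with $m \geq 2$: if the row is identically zero any nonzero $x$ is a weak solution; otherwise choose $i$ with $a_i \neq 0$ and any $j \neq i$, and set $x_i = -a_j$, $x_j = a_i$, with remaining coordinates zero. Since multiplication is single-valued in a hyperfield, $Ax$ contains $-a_i a_j + a_i a_j$, which contains $0$ by the multigroup axiom $0 \in a + (-a)$, and the solution is nontrivial because $x_j = a_i \neq 0$.

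For the inductive step $n \geq 2$ with $m > n$: if some row of $A$ is identically zero I would delete it and apply the induction hypothesis to the remaining $(n-1) \times m$ system (since $m > n > n-1$). Otherwise, after a column permutation, I may assume $a_{nm} \neq 0$. If moreover $m > n+1$, I would apply the induction hypothesis to the $n \times (m-1)$ submatrix $A'$ obtained by deleting column $m$: since $m-1 > n$, this furnishes a nontrivial $(x_1, \ldots, x_{m-1})$ with $0$ in each of the $n$ row-equations of $A'$, and extending by $x_m = 0$ yields a nontrivial weak solution of the original system.

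The remaining boundary case $m = n+1$ is the crux, and I would handle it through a Cramer-style construction. For each $j \in \{1, \ldots, n+1\}$ let $D_j \subseteq F$ denote the multivalued determinant of the $n \times n$ submatrix $A_{\hat{j}}$ of $A$ with column $j$ removed, and look for a nontrivial weak solution of the form $x_j = (-1)^{j+1} d_j$ with $d_j \in D_j$. The governing identity is Laplace expansion: for each row index $k$,
\[
\det(\tilde{A}^{(k)}) \;=\; \sum_{j=1}^{n+1} (-1)^{j+1} a_{kj}\, D_j \qquad (\text{as subsets of } F),
\]
where $\tilde{A}^{(k)}$ is the $(n+1) \times (n+1)$ matrix obtained by prepending row $k$ of $A$ to $A$ itself; this identity extends from fields to hyperfields thanks to full distributivity and the set-theoretic definition of determinant (Definition \ref{determinant}). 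Because $\tilde{A}^{(k)}$ has two equal rows, the standard row-swap pairing of permutations in $S_{n+1}$ collects contributions in matched pairs of the form $P + (-P)$, each containing $0$; hence $0 \in \det(\tilde{A}^{(k)})$ for every row index $k$.

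The main obstacle is upgrading these row-by-row witnesses into a single coherent choice of representatives $d_j \in D_j$ (not all zero) such that $\sum_j (-1)^{j+1} a_{kj} d_j \ni 0$ holds simultaneously for every $k$. This is precisely where hyperfield-specific axioms become decisive: single-valued, invertible multiplication together with the reversibility property $0 \in a+b \iff a=-b$ of hyperaddition allow one to propagate a fixed choice of representatives coherently across all rows. Non-triviality is secured by treating separately the degenerate configuration in which every $D_j$ collapses to $\{0\}$: in that case some $n \times n$ minor of $A$ already displays a visible dependence among its columns, which can be read off directly to yield the required nontrivial weak solution of $Ax \ni 0$.
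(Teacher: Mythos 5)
Your reduction to the square-plus-one case $m=n+1$ (base case $n=1$, deleting zero rows, deleting surplus columns) is sound, and your Cramer-style idea is a genuinely different route from the paper, which instead runs a Gaussian-elimination argument: normalize the first column, subtract rows, solve the scaled system, and back-substitute one coordinate at a time. But your proof has a critical gap exactly at the point you flag as "the main obstacle," and flagging it is not the same as closing it. The membership $0\in\det(\tilde A^{(k)})=\sum_{j}(-1)^{j+1}a_{kj}D_j$ is an existential statement about the multivalued sum: for each row $k$ there exist representatives $d_j^{(k)}\in D_j$ with $0\in\sum_j(-1)^{j+1}a_{kj}d_j^{(k)}$, and these representatives a priori depend on $k$. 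A weak solution requires a \emph{single} tuple $(d_1,\dots,d_{n+1})$ working for all $n$ rows simultaneously. You assert that single-valued invertible multiplication and reversibility "allow one to propagate a fixed choice of representatives coherently across all rows," but no mechanism is given, and none is obvious: reversibility converts one membership at a time, it does not synchronize independent choices drawn from the sets $D_j$. This uniformization problem is precisely what the paper's proof spends all its effort on --- it constructs the solution entries recursively and, at each back-substitution step, exhibits an element $z$ lying in the \emph{intersection} of the two relevant sums before setting $x_1=-z$, which is what makes one choice serve several equations at once. Your outline contains no analogue of that intersection step.

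The non-triviality argument is also incomplete. Ruling out only the configuration where every $D_j$ collapses to $\{0\}$ is not enough: even when some $D_j$ contains a nonzero element, you must show the coherent choice demanded above can be made with some $d_j\neq 0$, and in the degenerate case the claim that a vanishing $n\times n$ minor "displays a visible dependence among its columns" that "can be read off" is essentially the statement being proved (for general superfields the paper does not even establish that $0\in\det$ implies column dependence; Lemma \ref{scal4} only handles triangular matrices). Finally, the set-theoretic Laplace expansion you invoke, while plausible for hyperfields via fullness and generalized associativity of the multivalued sum, is nowhere proved in the paper and would itself need a careful verification. As it stands the argument establishes the per-row memberships $0\in\det(\tilde A^{(k)})$ but does not produce a weak solution.
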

\begin{proof}
    First, remember that every hyperfield is full. Now, let $A\in M_{n\times m}(F)$ with $m>n$. Second, since $F$ is a hyperfield, considering the vector space $F^n$, we have a full $F$-vector space such that for all $\lambda\in F$ and all $v\in F^n$, $\lambda v$ is a singleton set.
    
    Keeping this in mind, we will construct a weak solution $b\in M_{m\times 1}(F)$ dividing the proof in some cases.

    \textbf{Case I - $n=1$ (which imply $m\ge2$)}. In this case, let $A\in M_{m\times 1}(F)$ with
    $$A=\begin{pmatrix}
        a_1 & a_2 & ... & a_m
    \end{pmatrix}$$
    We need to find $x_1,...,x_m\in F$ (not all zero) such that
    $$0\in a_1x_1+a_2x_2+...+a_mx_m.$$
    If $a_1=a_2=0$, just choose $x_1=x_2=1$ and $x_i=0$ for all $i\ge3$. If $0\notin\{a_1,a_2\}$, choose $x_i=0$ for all $i\ge3$, $x_2=1$ and $x_1=-a_1^{-1}a_2$. Then
    $$a_1x_1+a_2x_2+a_3x_3+...+a_mx_m=a_1x_1+a_2x_2=a_1[-a_1^{-1}a_2]+a_2=a_2-a_2\mbox{ with }0\in a_2-a_2.$$
    We further refer to this tactic to find $x_1,x_2,...,x_m$ as ''the Case I method''\label{case-I-method}.
    
    $$\qquad\qquad\qquad\ast\ast\ast\qquad\qquad\qquad$$
    
    \textbf{Case II - $n=2$ (which imply $m\ge3$)}. In this case, let $A\in M_{m\times 2}(F)$ with
    $$A=\begin{pmatrix}
        a_1 & a_2 & ... & a_m \\
        b_1 & b_2 & ... & b_m
    \end{pmatrix}$$
    We need to find $x_1,...,x_m\in F$ (not all zero) such that
    \begin{align}\label{eq-01-sys-case-II}
        0&\in a_1x_1+a_2x_2+...+a_mx_m\nonumber \\
        0&\in b_1x_1+b_2x_2+...+b_mx_m
    \end{align}
    We have some cases here. If $a_j=b_j=0$ for some $j$, just choose $x_j=1$ and $x_i=0$ for all $i\ne j$. If $0\in\{a_1,...,a_m,b_1,...,b_m\}$, saying $b_1=0$ and $a_1\ne0$. Then we are reduced to
    \begin{align}\label{eq-02-sys-case-II}
        0&\in a_1x_1+a_2x_2+...+a_mx_m\nonumber\\
        0&\in b_2x_2+...+b_mx_m
    \end{align}
    Now find some $d_2,...,d_m\in F$ (not all zero) with $0\in b_2d_2+...+b_md_m$ (as in the Case I method) and choose $d_1$ in order to get $d_1\in -a_1^{-1}[a_2d_2+...+a_md_m]$. So $(d_1,...,d_m)$ is a non trivial weak solution of the reduced system \ref{eq-02-sys-case-II}. Now, let $0\notin\{a_1,...,a_m,b_1,...,b_m\}$. If $\{(a_1,...,a_m),(b_1,...,b_m)\}$ is LD, saying $(b_1,...,b_m)=\lambda(a_1,...,a_m)$, we just need to choose $d_1,d_2,...,d_m\in F$ not all zero (as in case I) such that $0\in a_1d_1+a_2d_2+...+a_md_m$ in order to get a solution of \ref{eq-01-sys-case-II} for this case.

    Now, suppose $0\notin\{a_1,...,a_m,b_1,...,b_m\}$ and $\{(a_1,...,a_m),(b_1,...,b_m)\}$ LI. Since $F$ is a hyperfield, to find a non trivial weak solution of \ref{eq-01-sys-case-II} is equivalent to find a non trivial weak solution of
    \begin{align*}
        0&\in x_1+a_1^{-1}a_2x_2+...+a_1^{-1}a_mx_m\nonumber \\
        0&\in x_1+b_1^{-1}b_2x_2+...+b_1^{-1}b_mx_m
    \end{align*}
    Then we can suppose without loss of generality that $a_1=b_1=1$, and we need to find a weak solution of
    \begin{align}\label{eq-03-sys-case-II}
        0&\in x_1+a_2x_2+...+a_mx_m\nonumber \\
        0&\in x_1+b_2x_2+...+b_mx_m
    \end{align}
    Now, consider the set of systems obtained after the elementary operation $L_2\leftarrow L_2-L_1$:
    \begin{align}\label{eq-04-sys-case-II}
        0&\in x_1+a_2x_2+...+a_mx_m\nonumber \\
        0&\in (1-1)x_1+(b_2-a_2)x_2+...+(b_m-a_m)x_m
    \end{align}
    As in Case I, let $d_2,...,d_m\in F$ (not all zero) with
    $$0\in (b_2-a_2)d_2+...+(b_m-a_m)d_m.$$
    In particular, there exist $z\in F$ with
    $$z\in(a_2x_2+...+a_mx_m)\cap(a_2x_2+...+a_mx_m).$$
    Let $x_1=-z$. Then $(-z,d_2,d_3,...,d_m)$ is a weak solution of both \ref{eq-03-sys-case-II} and \ref{eq-04-sys-case-II}, completing the proof for Case II. We further refer to this tactic to find $z$ as ''the Case II method''\label{case-II-method}.
    
    $$\qquad\qquad\qquad\ast\ast\ast\qquad\qquad\qquad$$

    \textbf{Case III - $n=3$ (which imply $m\ge4$)}. In this case, let $A\in M_{m\times 2}(F)$ with
    $$A=\begin{pmatrix}
        a_1 & a_2 & ... & a_m \\
        b_1 & b_2 & ... & b_m \\
        c_1 & c_2 & ... & c_m
    \end{pmatrix}$$
    We need to find $x_1,...,x_m\in F$ (not all zero) such that
    \begin{align}\label{eq-01-sys-case-III}
        0&\in a_1x_1+a_2x_2+...+a_mx_m\nonumber \\
        0&\in b_1x_1+b_2x_2+...+b_mx_m \nonumber \\
        0&\in c_1x_1+c_2x_2+...+c_mx_m
    \end{align}
    We cases to deal with. If $a_j=b_j=c_j=0$ for some $j$, just choose $x_j=1$ and $x_i=0$ for all $i\ne j$. Then we can suppose without loss of generality that $0\notin\{a_1,b_1,c_1\}$. Choosing $x_j=0$ for $j\ge5$, we only need to deal with the reduced system
    \begin{align}\label{eq-02-sys-case-III}
        0&\in a_1x_1+a_2x_2+a_3x_3+a_4x_4\nonumber \\
        0&\in b_1x_1+b_2x_2+b_3x_3+b_4x_4 \nonumber \\
        0&\in c_1x_1+c_2x_2+c_3x_3+c_4x_4
    \end{align}
    Also, Since $F$ is a hyperfield, to find a non trivial weak solution of \ref{eq-02-sys-case-III} is equivalent to find a non trivial weak solution of
    \begin{align*}
        0&\in x_1+a_1^{-1}a_2x_2+a_1^{-1}a_3x_3+a_1^{-1}a_4x_4\nonumber \\
        0&\in x_1+b_1^{-1}b_2x_2+b_1^{-1}b_3x_3+b_1^{-1}b_4x_4 \nonumber \\
        0&\in x_1+c_1^{-1}c_2x_2+c_1^{-1}c_3x_3+c_1^{-1}c_4x_4
    \end{align*}
    Then we can suppose without loss of generality that $a_1=b_1=c_1=1$ and only deal with the new reduced system
    \begin{align}\label{eq-03-sys-case-III}
        0&\in x_1+a_2x_2+a_3x_3+a_4x_4\nonumber \\
        0&\in x_1+b_2x_2+b_3x_3+b_4x_4 \nonumber \\
        0&\in x_1+c_2x_2+c_3x_3+c_4x_4
    \end{align}
    If one of the sets $\{(1,a_2,a_3,a_4);(1,b_2,b_3,b_4)\}$, $\{(1,a_2,a_3,a_4);(1,c_2,c_3,c_4)\}$ and $\{(1,b_2,b_3,b_4);(1,c_2,c_3,c_4)\}$ is LD, we are reduced to case II. Then we can suppose that all these three sets are LI (which imply, in particular that $0\notin b_i-a_i$ and $0\notin b_i-a_i$ for $i=2,3,4$). Performing the elementary operations $\{L_2\leftarrow L_2-L_1;L_3\leftarrow L_3-L_1\}$ we arrive at the systems
    \begin{align*}
        0&\in x_1+a_2x_2+a_3x_3+a_4x_4\nonumber \\
        0&\in (1-1)x_1+(b_2-a_2)x_2+(b_3-a_3)x_3+(b_4-a_4)x_4 \nonumber \\
        0&\in (1-1)x_1+(c_2-a_2)x_2+(c_3-a_3)x_3+(c_4-a_4)x_4
    \end{align*}
    Consider the scaled system
    \begin{align}\label{eq-04-sys-case-III}
        0&\in x_1+a_2x_2+a_3x_3+a_4x_4\nonumber \\
        0&\in (b_2-a_2)x_2+(b_3-a_3)x_3+(b_4-a_4)x_4 \nonumber \\
        0&\in (c_2-a_2)x_2+(c_3-a_3)x_3+(c_4-a_4)x_4
    \end{align}
    If $b_i-a_i=\lambda(c_i-a_i)$ for all $i=2,3,4$ or $b_i-a_i=\lambda(c_i-a_i)$ for all $i=2,3,4$ we are reduced to the Case II method (\ref{case-II-method}). If not, since $0\notin b_2-a_2$ and $0\notin c_2-a_2$, performing the elementary operations $\{L_3\leftarrow (b_2-a_2)L_3-(c_2-a_2)L_2\}$ we arrive at the systems
    \begin{align*}
        0&\in x_1+a_2x_2+a_3x_3+a_4x_4\nonumber \\
        0&\in (b_2-a_2)x_2+(b_3-a_3)x_3+(b_4-a_4)x_4 \nonumber \\
        0&\in [(b_2-a_2)(c_2-a_2)-(c_2-a_2)(b_2-a_2)]x_2+\nonumber\\
        &+[(b_2-a_2)(c_3-a_3)-(c_2-a_2)(b_3-a_3)]x_3+[(b_2-a_2)(c_4-a_4)-(c_2-a_2)(b_4-a_4)]x_4
    \end{align*}
    Consider the (second) scaled system
    \begin{align}\label{eq-05-sys-case-III}
        0&\in x_1+a_2x_2+a_3x_3+a_4x_4\nonumber \\
        0&\in (b_2-a_2)x_2+(b_3-a_3)x_3+(b_4-a_4)x_4 \nonumber \\
        0&\in [(b_2-a_2)(c_2-a_2)-(c_2-a_2)(b_3-a_3)]x_3+[(b_2-a_2)(c_4-a_4)-(c_2-a_2)(b_4-a_4)]x_4
    \end{align}
    Using the Case I Method (\ref{case-I-method}) we can find $d_3,d_4\in F$ (not all zero) with
    $$0\in [(b_2-a_2)(c_3-a_3)-(c_2-a_2)(b_3-a_3)]d_3+[(b_2-a_2)(c_4-a_4)-(c_2-a_2)(b_4-a_4)]d_4,$$
    which imply in particular that there exist some $z\in F$ with
    $$z\in[(b_2-a_2)(c_2-a_2)d_3+(b_2-a_2)(c_4-a_4)d_4]\cap[(c_2-a_2)(b_3-a_3)d_3+(c_2-a_2)(b_4-a_4)d_4].$$
    \kmar{Let $d_2\in -[(b_3-a_3)d_3+(b_4-a_4)d_4]$ such that $-z\in(c_2-a_2)d_2$}. Then
    $$0\in(c_2-a_2)[(b_2-a_2)d_2+(b_3-a_3)d_3+(b_4-a_4)d_4],$$
    which imply
    $$0\in(b_2-a_2)d_2+(b_3-a_3)d_3+(b_4-a_4)d_4,$$
    and in particular, there exist
    $$w\in[a_2d_2+a_3d_3+a_4d_4]\cap[b_2d_2+b_3d_3+b_4d_4].$$
    Picking $d_1=-w$, we have that $(d_1,d_2,d_3,d_4)$ is a weak solution of both (set of) systems \ref{eq-03-sys-case-III}, \ref{eq-04-sys-case-III} and \ref{eq-05-sys-case-III} completing the proof for Case III. We further refer to this tactic to find $z$ and $w$ as ''the Case III method''\label{case-III-method}.
    
    $$\qquad\qquad\qquad\ast\ast\ast\qquad\qquad\qquad$$

    \textbf{Case IV - the general case $m>n$ (and $n\ge3$)}. Just proceed by induction on $m$. The base cases are Case I and II and the induction step is an argument similar to the the Case III method (\ref{case-III-method}).
\end{proof}

As we can see, we had a lot of effort in order to prove Theorem \ref{basis5}. In this sense, we propose the following questions.

\begin{op}
$ $
\begin{enumerate}
    \item Is every full superfield $F$ a linearly closed superfield?
    \item What are the necessary conditions for a superfield $F$ be a linearly closed one?
\end{enumerate}
\end{op}

As application of these fragment of linear algebra for superfields, we get the following Theorem, which is a consequence of combining Theorem \ref{teohell2}, Proposition \ref{extvec}, Theorem \ref{basis2} and Theorem \ref{basis5}.

\begin{teo}
    Let $F$ be a linearly closed superfield and $p\in F[X]$ be an irreducible polynomial with $\deg p=n+1$. Then $F(p)$ is a full $F$-vector space and $\dim(F(p))=n+1$.
\end{teo}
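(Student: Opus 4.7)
The plan is to assemble the conclusion from the four cited results, essentially in sequence, with no new calculation beyond what is already packaged in them. First, I would apply Theorem \ref{teohell2}: since $p$ is irreducible of degree $n+1$, $F(p)|_{f}F$ is a full algebraic extension, and $F(p)$ is $F$-generated by $\{1,\gamma,\gamma^{2},\ldots,\gamma^{n}\}$ with $\gamma=\overline{X}$. Proposition \ref{extvec} then upgrades the fullness of the extension to the fullness of $F(p)$ as an $F$-vector space in the sense of Definition \ref{mvec}, giving the first half of the conclusion.

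For the dimension computation, the spanning set produced above has exactly $n+1$ elements, so $F(p)$ is finitely generated. I would invoke Remark \ref{rem1}(e), which says that $\{1,\gamma,\ldots,\gamma^{n}\}$ is $F$-linearly independent (this uses the irreducibility of $p$: a non-trivial dependence would exhibit a nonzero polynomial of degree $\leq n$ vanishing at $\gamma$, contradicting the minimality of $\deg p=n+1$). Consequently it is already an $F$-basis of cardinality $n+1$, so $F(p)$ has a basis of size exactly $n+1$.

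To pin the dimension down uniquely, the linearly closed hypothesis on $F$ enters via Theorem \ref{basis2}: since $F(p)=\langle 1,\gamma,\ldots,\gamma^{n}\rangle$ with $n+1$ generators, every $F$-linearly independent subset of $F(p)$ has at most $n+1$ elements. Combined with the existence of an LI set of size $n+1$, this forces every basis (which exists by Theorem \ref{basis1}) to have exactly $n+1$ elements, so $\dim(F(p))=n+1$ is well defined and equals $n+1$. Theorem \ref{basis5} is then cited to observe that the hypothesis ``$F$ linearly closed'' is automatic whenever $F$ is a hyperfield, so the result specializes to that case.

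The main obstacle I anticipate is not any deep calculation but the small bookkeeping check that the two notions of $F$-linear independence in play — the ``ad hoc'' one of Remark \ref{rem1} used in the extensions section and the one in Definition \ref{mvec} used in the vector space section — coincide on the vectors $\gamma^{k}$; once this compatibility is noted, the argument is a direct concatenation of Theorem \ref{teohell2}, Proposition \ref{extvec}, Theorem \ref{basis2}, and Theorem \ref{basis5}, with no further technical ingredient required.
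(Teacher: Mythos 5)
Your proposal is correct and follows essentially the same route as the paper, which itself only states that the theorem "is a consequence of combining Theorem \ref{teohell2}, Proposition \ref{extvec}, Theorem \ref{basis2} and Theorem \ref{basis5}" without writing out the details. Your additional steps (invoking Remark \ref{rem1}(e) for the explicit basis $\{1,\gamma,\ldots,\gamma^{n}\}$ and noting the compatibility of the two notions of linear independence) are exactly the bookkeeping the paper leaves implicit.
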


\begin{teo}
    Let $F$ is a linearly closed superfield and $p\in F[X]$ be an irreducible polynomial with $\deg p=n+1$. Then $F(p)$ is also linearly closed.
\end{teo}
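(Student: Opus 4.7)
The plan is to reduce any homogeneous linear system over $F(p)$ to a suitably larger homogeneous linear system over $F$ and then invoke the linear closedness of $F$. By the previous theorem, $F(p)$ is a full $F$-vector space of dimension $n+1$ with basis $\{1,\gamma,\ldots,\gamma^n\}$, where $\gamma=\overline{X}$; I will assume without loss of generality that $p$ is monic, writing $p=\sum_{l=0}^{n+1}\beta_l X^l$ with $\beta_{n+1}=1$.

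Given $A\in M_{k\times m}(F(p))$ with $m>k$, I would write each entry $a_{ij}=\sum_{l=0}^n\alpha_{ij,l}\gamma^l$ and search for solutions $x_j=[\tilde x_j]\in F(p)$ where $\tilde x_j=\sum_{l=0}^n\xi_{j,l}X^l\in F[X]$ with unknowns $\xi_{j,l}\in F$. Lifting to $F[X]$, the condition ``$0\in\sum_j a_{ij}x_j$ in $F(p)$'' is equivalent to the existence of polynomials $q_i=\sum_{l=0}^{n-1}\eta_{i,l}X^l\in F[X]$ (of degree at most $n-1$, by degree counting, since the ambient polynomials have degree at most $2n$) such that some element of $\sum_j\tilde a_{ij}\tilde x_j$ coincides with some element of $q_i\cdot p$ in $F[X]$. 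Because multiplication and addition in $F[X]$ are defined coefficient-wise, this unfolds into the homogeneous linear system over $F$,
\begin{equation*}
0\in\sum_{l'+l''=l}\sum_{j=1}^m\alpha_{ij,l'}\,\xi_{j,l''}\;-\;\sum_{l'+l''=l}\eta_{i,l'}\,\beta_{l''},\qquad 1\le i\le k,\ 0\le l\le 2n.
\end{equation*}
There are $m(n+1)+kn$ unknowns against $k(2n+1)$ equations, and the inequality $m>k$ gives $m(n+1)+kn>k(n+1)+kn=k(2n+1)$; linear closedness of $F$ then furnishes a nontrivial weak solution $(\xi,\eta)$.

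The main obstacle is to ensure the $\xi$-component of this solution is not identically zero, which is what guarantees the corresponding $x\in F(p)^m$ is nontrivial (the degree bound $\deg\tilde x_j\le n<\deg p$ then prevents a nonzero $\tilde x_j$ from lying in $\langle p\rangle$). I plan to handle this by a superdomain argument: if every $\xi_{j,l}$ vanished, the system would collapse to ``$0\in\text{coeff}_l(q_i\cdot p)$ for all $(i,l)$'', which by the very definition of multiplication in $F[X]$ means $0\in q_i\cdot p$ in $F[X]$. Since $F$ is a superfield, hence a superdomain, $F[X]$ is a superdomain by Lemma \ref{lemperm}(e); together with $p\ne 0$, this forces every $q_i=0$, contradicting the nontriviality of $(\xi,\eta)$. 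Once $\xi\ne 0$, the coefficient-wise matches assemble, for each $i$, into a polynomial lying simultaneously in $\sum_j\tilde a_{ij}\tilde x_j$ and in $\langle p\rangle$; projecting to $F(p)$ via the quotient superring morphism yields $0\in Ax$ with $x\ne 0$, so $F(p)$ is linearly closed.
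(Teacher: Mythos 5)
Your proof is correct in its essentials but takes a genuinely different route from the paper's. The paper decomposes each entry of $A$ along the basis $\{1,\gamma,\dots,\gamma^n\}$, obtaining $n+1$ matrices $A_k$ over $F$ and reading $Ax=0$ as $A_0x+\gamma A_1x+\dots+\gamma^nA_nx=0$ for $x$ with entries in $F$; it then applies linear closedness of $F$ to each $A_kx=0$ separately. You instead keep the unknowns in $F(p)$ (so $m(n+1)$ coordinates $\xi_{j,l}$), lift to $F[X]$, and adjoin the auxiliary unknowns $\eta_{i,l}$ encoding the multipliers $q_i$ of $p$; the count $m(n+1)+kn>k(2n+1)$ reduces everything to a \emph{single} application of linear closedness over $F$, and the superdomain property of $F[X]$ (Lemma \ref{lemperm}) eliminates the degenerate solutions with $\xi=0$. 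This buys something concrete: the paper's step ``each $A_kx=0$ has a nontrivial solution, providing a nontrivial solution for $Ax=0$'' really requires a \emph{simultaneous} solution of the stacked system with $(n+1)k$ rows over $F$, and $m>k$ does not give $m>(n+1)k$; your single-system formulation with extra unknowns sidesteps this entirely. Two small points to tidy up: the claimed ``equivalence'' with a single product $q_i\cdot p$ is really only the sufficiency direction you use (in a non-full $F[X]$ the ideal $\langle p\rangle$ is a union of sums of products, and the quotient operations are defined via representatives, so the reverse implication needs more care but is not needed); and the final nontriviality of $x$ in $F(p)$ should be pinned explicitly to the linear independence of $\{1,\gamma,\dots,\gamma^n\}$ from Remark \ref{rem1}. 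Neither affects the validity of the argument.
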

\begin{proof}
    Remember that $F(p)$ is generated by $\{1,\gamma,...,\gamma^n\}$ with $\gamma=\overline X$, $n\in\mathbb N$. Also, we can consider $n$ as the minimal integer such that there exist $a_0,...,a_{n+1}$ with 
    $$0\in d_0+d_1\gamma+...+d_{n+1}\gamma^{n+1}.$$
    
    Let $A\in M_{m\times q}(F(p))$, saying, $A=(\alpha_{ij})$. We can write each $\alpha_{ij}$ as
    $$\alpha_{ij}=a_{0ij}+a_{1ij}\gamma+...+a_{nij}\gamma^n$$
    for suitable $a_{kij}\in F$. Then a system $Ax=0$ over $F(p)$ can be split into $n+1$ systems $A_kx=0$ over $F$, where
    $A_k=(a_{kij})$ for each $k=0,1,...,n$ (in fact, $Ax=0$ means $A_0x+\gamma A_1x+\gamma^2A_2x+...+\gamma^nA_nx=0$). Since $F$ is linearly closed, each $A_kx=0$ has at least a non-trivial solution, providing a non-trivial solution for $Ax=0$.
\end{proof}

\section{Final remarks and and future works}

 We just have started  a development of a (or a possible approach to) ``multivalued Linear Algebra''. No doubt that will be necessary much effort to provide a more complete analysis of the notion of vector spaces, and define and analyze invariants subspaces. 
 
 The  fragment  of this multivalued Linear Algebra here development suggests that it may be a promising theory: the ``Kronecker construction'' (Theorem \ref{teohell2}) providing simple algebraic extensions in the superfield setting is a keystone to develop a program of studying the hyperfields and  superfields under a natural notion of algebraic extension and roots of polynomials. This program shares some common features with the recent work in \cite{baker2021descartes}. In fact,   in  \cite{roberto2022ACmultifields2}, based on Theorem \ref{teohell2} we show that every ``well behaved'' superfield has a (unique up to isomorphism) full algebraic extension to a superfield that is algebraically closed. 

 The next steps in the program of study algebraic extensions of superfields are a development of Galois theory and Galois cohomology theory,  envisaging application to other mathematical theories as abstract structures of quadratic forms and real algebraic geometry ((\cite{ribeiro2016functorial},\cite{roberto2021quadratic},\cite{roberto2021ktheory}). Some  parts of this program are under development in \cite{roberto2021hauptsatz} and \cite{roberto2022galois}; in particular, in \cite{roberto2021hauptsatz} we have developed a fragment of theory of quadratic extensions of hyperfields and superfields and, applying that to the theory of Special Groups (\cite{dickmann2000special}), we obtained a generalization of Arason-Pfister property (\cite{arason1971hauptsatz},\cite{dickmann2000special}) to {\em every} special groups.

\bibliographystyle{plain}
\bibliography{one_for_all}
\end{document}